\newenvironment{proof}{\topsep=\smallskipamount \partopsep=0pt  %
 \begin{trivlist} \itemindent=\parindent                        %
 \item[\hskip \labelsep\emph{Proof:}]}{\end{trivlist}}      %
\let\qed=\relax                                                 %
\def\qed                                                        %
   \quad\hbox{}\nobreak\hfil $\Box$                     %
\newtheorem{theorem}{Theorem}[section]
\newtheorem{corollary}[theorem]{Corollary}
\newtheorem{definition}[theorem]{Definition}
\newtheorem{lemma}[theorem]{Lemma}
\newtheorem{proposition}[theorem]{Proposition}
\newtheorem{remark}[theorem]{Remark}
\def\citep#1#2{\cite[{#1}]{#2}}
\newcommand{\scp}[1]{\langle #1\rangle}  
\newcommand{\trk}[1]{{\mathrm{tr}(#1)}}  
\newcommand{\rank}{\mathrm{rank}}  
\newcommand{\dvol}{d\mu}           
\newcommand{\OO}{\mathcal{O}}   
\newcommand{\EE}{\mathcal{E}}  
\newcommand{\FF}{\mathcal{F}}  
\newcommand{\QQ}{\mathcal{Q}} 
\newcommand{\VV}{\mathcal{V}} 
\newcommand{\MM}{\mathcal{M}} 
\newcommand{\NN}{\mathcal{N}} 
\newcommand{\HH}{\mathcal{H}} 
\newcommand{\bbk}{\mathbf{K}}  
\newcommand{\bbp}{\mathbf{P}}  
\newcommand{\PP}{\mathbf{P}}  
\newcommand{\bbh}{\mathbf{H}}  
\newcommand{\bbi}{\mathbf{I}}  
\newcommand{\id}{\bbi}         
\newcommand{\cyl}{\mathbf{S}}  
\DeclareMathAlphabet{\mathdj}{U}{msb}{m}{n}  
\newcommand{\N}{\ensuremath{\mathdj {N}}} 
\newcommand{\Z}{\ensuremath{\mathdj {Z}}} 
\newcommand{\R}{\ensuremath{\mathdj {R}}} 
\newcommand{\C}{\ensuremath{\mathdj {C}}} 
\newcommand{\U}{\mathrm{U}}
\newcommand{\SU}{\mathrm{SU}}
\newcommand{\Gl}{\mathrm{Gl}}
\newcommand{\Sl}{\mathrm{Sl}}
\newcommand{\Deck}{\mathfrak{D}} 
\newcommand{\ad}{\mathrm{ad}}
\newcommand{\tr}{\mathrm{tr}}
\newcommand{\End}{\mathrm{End}}
\newcommand{\Aut}{\mathrm{Aut}}
\newcommand{\diag}{\mathrm{diag}}
\begin{document}
\title{Singular Hermitian--Einstein monopoles on the product of a circle and  
 a Riemann surface}
\author{Benoit Charbonneau and Jacques Hurtubise}
\date{October 19, 2009.}
\maketitle

\begin{abstract}
In this paper, the moduli space of singular unitary Hermitian--Einstein
monopoles on
the product of a circle and a Riemann surface is shown to correspond to a
moduli space of stable pairs on the Riemann surface.  These pairs consist of a
holomorphic vector bundle on the 
surface and a meromorphic automorphism of the bundle.  The singularities of this
automorphism correspond to the singularities of the singular monopole.  We
then consider the complex geometry of the moduli space; in particular, we
compute dimensions, both from the complex geometric and the gauge theoretic
point of view.\footnote{The authors
wishes to thank Tom
Mrowka and Mark Stern for
useful discussions, and Marco Gualtieri and Paul Norbury for comments on the
first two versions of the paper. The second author is supported by NSERC and FQRNT.
The authors can be reached respectively at 
[Math Dept, Duke University,
Box 90320, Durham, NC 27708-0320, USA,
benoit@alum.mit.edu], and  [Dept of Math
and Stat, McGill U., 805 Sherbrooke St. W, Montreal, Canada H3A
2K6, jacques.hurtubise@mcgill.ca].}
\end{abstract}

\section{Introduction} 
In a recent paper about the geometric Langlands program,  Kapustin and Witten \cite{geometricLanglands} expound the
idea that the moduli of singular monopoles on
the product of a Riemann surface $\Sigma$ with an interval should mediate the
Hecke transforms which play a part in the geometric Langlands correspondence. A
particular case is when the product of the transforms gives back the original bundle, and in
understanding this relationship, it is then natural to ask what one gets as a
monopole moduli space when  one closes the interval to a circle.

Another motivation to study monopole moduli spaces on such a product is obtained
by specialising to when the surface is a torus. One of the main tools for
understanding
monopoles, and more generally anti-self-duality, has been the Nahm transform
heuristic. It tells us that singular monopoles on $T^3=S^1\times T^2$ should
correspond to instantons on some ``dual'' $\R\times T^3$.  (While aspects
of this correspondence have been elucidated in \cite{benoitpaper}, the
correspondence is still not completely proven.) Thus, even when studying smooth
solutions to the anti-self-duality equations, one is led to consider
configurations with singularities.

The monopoles under consideration in this paper are solutions to 
a generalisation of 
the Bogomolny equation 
\[F_\nabla=*d_\nabla\phi\]
 linking the curvature $F_\nabla$ of a unitary
connection $\nabla$ on a Hermitian bundle $E$ over a Riemannian three-manifold $Y$ and an
skew-Hermitian endomorphism $\phi$ of $E$ called a Higgs field.  
The generalisation is the addition, in the special case 
where $Y=S^1\times \Sigma$, of a constant central term:
\begin{equation*} 
F_\nabla-iC\id_E\omega_\Sigma=*d_\nabla \phi. 
\end{equation*}

When $Y$ is compact,  global smooth solutions to the standard Bogomolny equation 
are quite trivial: the connection $\nabla$ must be flat and $\phi$ must be parallel. Over open manifolds, monopoles and their moduli  have been extensively
studied over the past twenty-five years, beginning with the case of $\R^3$. 
For $\R^3$, Hitchin in \cite{hitchinGeodesics} constructed all monopoles for 
$\SU(2)$ using the twistor methods of Penrose, Ward and Atiyah. Using this
construction, and
its natural extension to other gauge groups,  the moduli spaces were
described in
\cite{DonaldsonSU2,hurtubiseClassification,JarvisEuclideanMonopoles}. On
hyperbolic space, a similar description was 
given by Atiyah in 
\cite{Atiyah-monopoles-hyperbolic} (see also \cite{JarvisNorbury1997}). The
$\R^3$-spaces have natural metrics
 \cite{atiyah-hitchin};  for a unified discussion of Euclidean and hyperbolic
cases, see \cite{Nash-monopolemoduli}.  Monopoles on $\R^2\times S^1$ have been
studied in
\cite{cherkis2001} from the perspective of the Nahm transform.

The first study of singular monopoles (with Dirac-type singularities)
is due to Kronheimer in
\cite{kronheimerMSc} for Euclidean spaces. Pauly  computed in
\cite{pauly} the virtual dimension of the moduli space of singular
$\SU(2)$-monopoles and in \cite{Pauly-spherical-monopoles}   started the study
of singular monopoles on the round three-sphere. More
recently, Nash  considered in
\cite{Nash-singularhyperbolic} the twistor theory of singular
hyperbolic $\SU(2)$-monopoles. Norbury proved in 
\cite{Norbury-monopoles-boundary} the existence and uniqueness of singular
monopoles satisfying prescribed boundary conditions on an interval times a
surface.
The properties of certain moduli spaces of
singular
monopoles on $\R^3$ and $\R^2\times S^1$ allowed Cherkis--Kapustin
in \cite{cherkis1999,Cherkis-Kapustin-Super-QCD}
 and Cherkis--Hitchin in
\cite{cherkis-hitchin-Dk} to produce families of asymptotically locally flat
gravitational instantons.  

We shall
restrict our attention in this paper to the gauge group $\U(n)$. 
Our solutions have singularities, and we must
fix the nature of the singularity. Fortunately, in our case, there is a fairly
natural choice: we ask that near the singularity, the monopole, in essence,
should decompose into a sum of Dirac monopoles. These boundary conditions were
studied  by Pauly \cite{pauly}, who shows, by exploiting the geometry of the Hopf fibration, that there are natural
local lifts from $\R^3$ to $\R^4$ that tame the singularity.

The solutions to the Bogomolny equation we consider thus have Dirac-type 
singularities at fixed points $p_i= (t_i, z_i)\in S^1\times\Sigma$, 
where the $z_i$ are distinct. 
The bundle $E$ is defined on $S^1\times \Sigma\setminus\{p_1,\ldots, p_N\}$.
Precise definitions of the allowed singularities are given below; for the moment,
let $R_i$ be the geodesic distance, in $S^1\times \Sigma$, to $p_i$. 
On a small sphere surrounding the singularity $p_i$, the bundle is a sum of line bundles of degree  $k_{i1},\dots,k_{in}$ and near the singularity the monopoles will have Higgs field $\phi$ asymptotic to
$(\sqrt{-1}/2R_i)\diag(k_{i1},\dots,k_{in})$,
where $\vec{k}_i=(k_{i1},\ldots,k_{in})$ is a sequence of
integers, ordered so that $k_{i1}\geq \dots \geq k_{in}$. We collect these
sequences together as a sequence
\begin{equation}
\bbk=\bigl((\vec{k}_1,z_1),\ldots,(\vec{k}_N,z_N)\bigr).
\end{equation}
We also collect the length $T$ of the circle and the circle coordinates $t_i$ of the singularities   in a vector
\begin{equation}\vec{t}=(t_1,\ldots,t_N,T).\end{equation}

Denote by $E_t$ the restriction to $\{t\}\times\Sigma $ of the bundle $E$ for $t\neq t_i$. Let the degree of $E_0$ be  $k_0$. As one moves
through the point $p_i$, the degree of $E_t$ changes by $\trk{\vec{k}_i}:=\sum_j
k_{ij}$; in particular, it must be that $\sum_{i} \trk{\vec{k}_i}= 0$.

As with other examples of
moduli of  solutions to the anti-self-duality equations and their reductions, we
exploit the fact that the equations decompose into two components. The first
component simply states that we are dealing with a holomorphic object; the
second is variational in nature, and possesses a unique solution once one has a
solution to the first component, as long as the holomorphic object is stable in
a suitable sense. 
This general scheme often goes by the name of the Kobayashi--Hitchin correspondence.
In our case, we obtain the following theorem.
\begin{theorem}\label{thm:main}
The moduli space ${\MM}_{k_0}^{ir}(S^1\times \Sigma,
p_1,\ldots,p_N,\vec{k}_1,\ldots,\vec{k}_N)$ of $\U(n)$
irreducible Hermitian--Einstein monopoles on
$S^1\times\Sigma$ with $E_0$ of degree
$k_0$ and singularities at $p_j$ of type $\vec{k}_j$ maps bijectively to the
space ${\MM}_s(\Sigma,k_0, \bbk,\vec{t})$ of
$\vec{t}$-stable holomorphic pairs $(\EE,\rho)$ with 
\begin{itemize}
\item $\EE$ a holomorphic rank $n$ bundle of degree $k_0$ on $\Sigma$,
\item $\rho$ a meromorphic section of $\Aut(\EE)$ of the form $F_i(z)
\diag_j((z-z_i)^{k_{ij}})G_i(z)$ near $z_i$, with $F_i, G_i$ holomorphic and
invertible, and such that
$\det(\rho)$ has divisor $\sum_i\trk{  \vec{k}_i} z_i$.
\end{itemize}
More generally, 
the reducible HE-monopoles  correspond bijectively to 
 $\vec{t}$-polystable, but unstable, pairs.
\end{theorem}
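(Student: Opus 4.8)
The plan is to realise the asserted map via the two-step Kobayashi--Hitchin scheme already flagged above, building the holomorphic pair from a monopole by scattering along the circle and recovering a monopole from a stable pair by solving the variational (Hermitian--Einstein) part. First I would carry out the dimensional reduction. Writing the circle coordinate as $t$ and a local holomorphic coordinate on $\Sigma$ as $z$, I split $\nabla$ into its $\Sigma$-part and its $t$-component $A_t$, and combine the latter with the Higgs field into the complex flow operator $\nabla_t-i\phi$. A direct computation then shows that the equation $F_\nabla-iC\id_E\omega_\Sigma=*d_\nabla\phi$ is equivalent to two conditions: an integrability condition $[\bar\partial_\nabla,\,\nabla_t-i\phi]=0$, which is the holomorphic part of the equation and makes the complex flow holomorphic in $z$, together with a real Hermitian--Einstein equation prescribing the trace of the curvature in terms of the central constant $C$, which is the variational part.

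Second, \textbf{from a monopole to a pair}: restricting $\bar\partial_\nabla$ to the slice $\{0\}\times\Sigma$ endows $E_0$ with a holomorphic structure, giving $\EE$ of degree $k_0$. The integrability condition says the complex flow intertwines the holomorphic structures on successive slices for $t\neq t_i$, so that away from the $z_i$ its parallel transport once around $S^1$ defines a holomorphic isomorphism. The essential local computation is to show that crossing the singularity $p_i$, where $\phi$ is asymptotic to $(\sqrt{-1}/2R_i)\diag(k_{i1},\dots,k_{in})$, performs a Hecke modification of type $\vec k_i$ at $z_i$. Composing the transports and modifications around the full circle produces the monodromy $\rho$ as a meromorphic section of $\Aut(\EE)$ of the prescribed form $F_i(z)\diag_j((z-z_i)^{k_{ij}})G_i(z)$, and tracking determinants yields $\mathrm{div}(\det\rho)=\sum_i\trk{\vec k_i}\,z_i$.

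Third, \textbf{stability versus irreducibility, and the inverse map}: a $\nabla$- and $\phi$-invariant subbundle of $E$ restricts to a $\bar\partial_\nabla$-holomorphic, $\rho$-invariant subsheaf of $\EE$, and conversely, so that irreducibility of the monopole matches the absence of destabilising subobjects of the pair. The Hermitian--Einstein equation forces exactly the slope inequality defining $\vec t$-stability, the weighting by $\vec t=(t_1,\dots,t_N,T)$ arising because the degree contributed by each Hecke modification is measured along the circle segment on which the complex flow acts before the corresponding singularity is crossed. For the inverse direction, given a $\vec t$-stable pair I would reconstruct the complexified data on $S^1\times\Sigma$ and invoke the existence--uniqueness theorem for the Hermitian--Einstein metric: stability guarantees a unique solution to the variational part, producing a monopole whose scattering data returns $(\EE,\rho)$. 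Since the two constructions are mutually inverse up to unitary gauge, this gives the bijection; decomposing a polystable pair into its stable summands, and a reducible monopole into irreducible pieces, extends the correspondence to the reducible case.

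The main obstacle I expect is twofold. The first is the local singularity analysis: rigorously matching the Dirac-type asymptotics of $\phi$ to the Hecke/pole structure of $\rho$, and showing that the putative monodromy extends meromorphically across each $z_i$ with precisely the stated normal form $F_i\,\diag_j((z-z_i)^{k_{ij}})\,G_i$. The second, and analytically deeper, is the inverse direction, namely solving the Hermitian--Einstein equation on the non-compact, singular total space with the prescribed boundary behaviour near the $p_i$, where one must establish that $\vec t$-stability is both necessary and sufficient for existence of the metric.
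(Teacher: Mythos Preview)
Your outline follows the same Kobayashi--Hitchin scheme as the paper, and you have correctly isolated the two genuine difficulties. What is missing is any indication of \emph{how} those difficulties are overcome; since that is where all the content lies, let me point out what the paper actually does.

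For the local scattering analysis, the paper does not attack the asymptotics of the ODE for $\nabla_t-i\phi$ directly. Instead it uses the Kronheimer--Pauly lift via the Hopf map $B^4\to B^3$: a Dirac-type singularity downstairs becomes a smooth $S^1$-invariant ASD connection upstairs, and holomorphic $S^1$-invariant sections upstairs descend to solutions of $(\nabla_t-i\phi)s=0$ and $\nabla_\Sigma^{0,1}s=0$ below. Comparing the given monopole with a sum of model Dirac monopoles (for which the scattering map is exactly $\diag(z^{k_{ij}})$) via an invertible holomorphic section of $E_0^*\otimes E$ then yields the normal form $F\,\diag(z^{k_{ij}})\,G$.

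For the inverse (existence) direction, you cannot simply ``invoke'' a Hermitian--Einstein theorem on $S^1\times\Sigma$: there is no such theorem for $3$-manifolds with Dirac singularities. The paper instead lifts to the noncompact K\"ahler $4$-manifold $X=S^1\times\bigl((S^1\times\Sigma)\setminus\{p_i\}\bigr)$, builds an initial $S^1$-invariant metric $K$ that is literally a sum of Dirac monopoles near each singularity (so that $|\Lambda F_K|$ is bounded), and then applies Simpson's heat-flow theorem for noncompact K\"ahler manifolds. This requires verifying Simpson's three hypotheses on $X$ (finite volume, a suitable exhaustion, a mean-value inequality) and showing that Simpson's analytic degree equals the $\vec t$-degree, so that Simpson-stability coincides with $\vec t$-stability. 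Finally, Simpson's theorem does not by itself give the needed regularity at the punctures; the paper again uses the Hopf lift to $B^4$, compares with Donaldson's boundary-value heat flow, and invokes uniqueness to conclude that the limiting metric has Dirac-type behaviour.

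One further point: your argument that HE $\Rightarrow$ stable via ``a $\nabla,\phi$-invariant subbundle restricts to a $\rho$-invariant subsheaf, and conversely'' is not how the paper proceeds, and the ``conversely'' is not obvious. The paper argues by induction on rank: given a $\rho$-invariant subbundle $\VV\subset\EE$, the inductive hypothesis produces an HE-monopole $V$ with its own constant $D$ (not assumed to sit inside $E$ as a sub-monopole); the inclusion then gives a global holomorphic section of $V^*\otimes E$, and an integration-by-parts identity forces $C-D\leq 0$, which is exactly the slope inequality.
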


The required notions, in particular of stability, are defined below.

Sections two through four of this paper are concerned with the proof of this theorem. 
The fifth section considers some examples. 
In the sixth and seventh sections, we use these ideas to consider   
monopoles on the product of an interval and a Riemann surface and 
on certain flat circle bundles over Riemann surfaces.

\section{Definitions}
\subsection{The Bogomolny  and  HE--Bogomolny equations}
Let $\Sigma$ be a Riemann surface, equipped with a Hermitian
metric; let $z$ denote a coordinate on $\Sigma$. Let $S^1$
be the circle, equipped with metric such that its circumference is $T$. Let
$t\in [0,T]$ be a coordinate on $S^1$, such that the $S^1$-metric is given by
$dt^2$.   We consider the three-fold $S^1\times \Sigma$ equipped with the
product metric, and as above, denote the submanifolds $\{t\}\times\Sigma$ by
$\Sigma_t$. Let $p_1,\ldots,p_N$ be a collection of points on $S^1\times\Sigma$;
set $p_i=(t_i, z_i).$ We suppose that the $z_i$ are distinct, and that the $t_i$ are ordered 
$t_1\leq\cdots\leq t_N$. We suppose the $t_i$
distinct and the origin of the circle chosen so that $t_i\neq 0$. The case where some $t_i$ are equal imposes no supplementary conceptual difficulty, but complicates the notation.

As above, we fix integers $k_{ij}, i=1,\ldots,N, j= 1,\ldots,n$ with
$\sum_{ij}k_{ij} = 0$. Now let $E$ be a Hermitian vector bundle of rank $n$ on 
$(S^1\times \Sigma) \setminus\{p_1,\ldots,p_N\}$, with degree $k_0$ on $\Sigma_0$,
and
degree $\trk{\vec{k}_i}=\sum_{j}k_{ij}$ on small spheres surrounding the $p_i$. Let
$E$ be equipped with a unitary connection $\nabla$ of curvature $F_\nabla$, and a
skew-Hermitian section $\phi$ of $\End(E)$ called the Higgs field. We say that
$(E,\nabla,\phi)$
satisfies the \emph{Bogomolny equation} if 
\begin{equation}
F_\nabla = *\nabla\phi.
\end{equation}
Let us recall how this equation is a reduction of the
anti-self-duality equation in four dimensions. Extend the circle to
a cylinder $\cyl=\R\times S^1$ with extra coordinate $s$, so that $w=s+it$ is a
holomorphic coordinate on $\cyl$. The product metric on $S^1\times\Sigma$
extends to a product K\"ahler metric $g$ on $\cyl\times\Sigma$ with K\"ahler
form $\Omega$. Let $\pi\colon \cyl\times\Sigma \to S^1\times\Sigma $ be the
projection, and let $\hat\nabla=(\frac{\partial}{\partial
s}+\pi^*\phi)ds+\pi^*\nabla$. The pair $(\nabla,\phi)$
satisfies the Bogomolny equation if and only if
$\hat\nabla$ satisfies the anti-self-dual (ASD) equation
$*F_{\hat\nabla}=-F_{\hat\nabla}$.  Another way to phrase this ASD equation
more in tune with the
complex structure is to split the curvature into bitype, and then to
isolate the component $\Lambda F_\nabla \cdot \Omega$ of the
curvature parallel
to the K\"ahler form $\Omega$. The
ASD equations become
\[F^{0,2}_{\hat\nabla}=0, F^{2,0}_{\hat\nabla}=0, \Lambda
F_{\hat\nabla}=0.\]
The first two equations simply state that one has a holomorphic object, with a
compatible Hermitian structure; the third, $\Lambda
F_{\hat\nabla}=0$, is, as we shall see, variational in nature.

On a compact complex surface, these equations impose constraints on the first
Chern class of the bundle.  There are
more general equations, the
Hermitian--Einstein
equations, that free us from this constraint.  They are
\begin{equation}
F_{\hat\nabla}^{0,2}= 0, F_{\hat\nabla}^{2,0}= 0, \Lambda F_{\hat\nabla} =iC\id,
\end{equation}
for  an imaginary constant multiple $iC$ of the identity endomorphism.
These equations can have non-trivial solutions on bundles
of arbitrary degree. 

In our case also, we find that the Bogomolny equations impose constraints, this time on the location of the singularities, that are too restrictive for our purposes. Just as the Bogomolny equations are reductions of the ASD equations, we have a reduced version of the Hermite--Einstein equation, which we call the
\emph{Hermitian--Einstein--Bogomolny equation} (or \emph{HE--Bogomolny} for short).
It is
\begin{equation}\label{eqn:HEBogomolny}
F_\nabla-iC\id_E\omega_\Sigma=*\nabla \phi
\end{equation}
If $ \nabla^{0,1}$ is the $0,1$ component of the covariant derivative along
$\Sigma$, and $F_\Sigma\cdot\omega$ is the component of the curvature along
$\Sigma$, and $\omega$ the K\"ahler form on $\Sigma$, these equations are
\begin{align}
[\nabla^{0,1}_\Sigma, \nabla_t-i\phi]&=0,\\
F_\Sigma - \nabla_t\phi &= iC\id_E.
\end{align}

\subsection{The Dirac monopole}\label{sec:Dirac}
Our singularities are modeled on those of the Dirac monopole. We begin by considering this example in some detail.
On $\R^3$, one has spherical coordinates related to the Euclidean coordinates by
$(t,x,y)=(R\cos\theta,R\cos\psi\sin\theta,R\sin\psi\sin\theta)$.
The volume form is
\[\dvol_{\R^3}=R^2\sin\theta dR\wedge d\theta\wedge d\psi=-R^2dR\wedge
d\bigl(\cos\theta d\psi\bigr).\]

Consider the Hermitian line 
bundle $L_k$ over $\R^3\setminus\{(0,0,0)\}$ defined by the transition function
$g_{\pi 0}=e^{ik\psi}$ on the complement of the $t$-axis, from 
$\theta\neq0$ to $\theta\neq \pi$.  
Hence any section $\sigma$ of $L_k$ is given by two functions 
\begin{align*}
\sigma_0&\colon \{(t,x,y)\in\R^3\setminus\{(0,0,0)\}
\mid \theta\neq 0\}\to\C,\text{ and}\\
\sigma_\pi&\colon \{(t,x,y)\in\R^3\setminus\{(0,0,0)\}
\mid \theta\neq \pi\}\to\C
\end{align*}
subject to the relation $\sigma_\pi=g_{\pi0}\sigma_0$.

Consider the connection $\nabla$ on $L_k$ defined by
the connection matrices
\begin{align}
  A_0&= \frac{ik}{2}(\phantom{-}1+\cos\theta)d\psi
\text{ on }\theta\neq0,\\
 A_\pi&=\frac{ik}{2}(-1+\cos\theta)d\psi
\text{ on }\theta\neq\pi.
\end{align}
 Note that while $d\psi$ is not defined at $\theta=0$ and $\theta=\pi$,
 the connection matrices $A_0$ and $A_\pi$ are smooth
respectively at $\theta=\pi$ and $\theta=0$.

Let
\begin{equation}
 \phi = \frac{ik}{2R}.
\end{equation}
We have
\[d\phi=-\frac{ik}{2R^2}dR=*\bigl(\frac{ik}2d(\cos\theta d\psi)\bigr)=*F_\nabla,\]
so $(\nabla,\phi)$ satisfy the Bogomolny equation.
  Notice that, restricted to
the sphere, we have
\[c_1(L_k)=\frac{i}{2\pi}\int_{S^2}F_\nabla=\frac{i}{2\pi}\bigl(-\frac{ik}
2\bigr)\int_{S^2}\dvol_{S^2}=k.\]
We call this special solution to the Bogomolny equation  the \emph{model
Dirac monopole of charge $k$}.

We now  work out the $\bar\partial$ operator 
$\nabla^{0,1}=\frac12(\nabla_x+i\nabla_y)$ 
and the change of gauge from a unitary to a 
``holomorphic gauge,'' 
that is a non-unitary trivialisation by a section $\sigma$
satisfying $\nabla^{0,1}\sigma=0, (\nabla_t-i\phi)\sigma = 0$.
Note that $\cos(\theta)=t/R$, and 
$d\psi=\frac{xdy-ydx}{r^2}$. Set $z = x+iy$. 
In the open set $\theta\neq0$,
\begin{equation*}
A_0 =\frac{ik}{2}\bigl(1+\frac tR\bigr)\frac{ xdy-ydx}{r^2}=
\frac{k}{4}\frac{1+t/R}{ r^2}(\bar zdz-zd\bar z)
\end{equation*}
and so \begin{equation*}
A^{0,1}_0 
=-\frac{k}{4}\frac{1+t/R}{ r^2}zd\bar z=-\frac{k}{4}\frac{(t
+\sqrt{t^2+z\bar z})}{z\bar z\sqrt{t^2 +z\bar z}}(zd\bar
z)\end{equation*}
Consider the radial (in $r$) form 
\begin{equation*}-\frac{k}{4}\frac{(t +\sqrt{t^2+r^2})}{r^2 \sqrt{t^2
+r^2}}(2rdr)\end{equation*}
with the same $(0,1)$ part. We can change the trivialisation  to
eliminate the $(0,1)$ part by applying a change of trivialisation
$g_0$ that solves, in cylindrical coordinates $t, z, \psi$, the equation
\begin{equation*}\label{ODEforg}\frac{\partial}{\partial r} \ln(g_0) =
-\frac{k}{2}\frac{(t +\sqrt{t^2+r^2})}{r\sqrt{t^2+r^2}}.\end{equation*}

A solution to this differential equation is
\begin{equation} g_0=(R-t)^{\frac{-k}{2}}. \end{equation}
In the new trivialisation, the connection form transformed by ($a\mapsto a - (dg)g^{-1}$) is given by $-\frac{k}{2R} dt + \frac{k(R+t)}{2r^2R} \bar z dz$.  Hence,
in this trivialisation $\nabla^{0,1}=\partial^{0,1}$, and  $\nabla_t-i\phi=\partial_t$. In this holomorphic gauge,  the metric is given by  
\begin{equation}\label{eqn:metricminus}
(g_0^*g_0)^{-1}=(R-t)^{k}.
\end{equation}
 
Similarly when $\theta\neq\pi$, after the change of trivialization,
\begin{equation} g_\pi=(R+t)^{\frac {k}{2}},\end{equation}
the connection form is $-\frac{k}{2R} dt - \frac{k(R-t)}{2r^2R} \bar z dz$.
In this trivialisation again $\nabla^{0,1}=\partial^{0,1}$ and
$\nabla_t-i\phi=\partial_t$. In this holomorphic gauge, the metric is given by 
\begin{equation}\label{eqn:metricplus}
(g_\pi^*g_\pi)^{-1}=(R+t)^{-k}. \end{equation}

The two new trivialisations are related by
\begin{equation} \label{eqn:zk} g_\pi g_{\pi0}g_0^{-1}=z^k. \end{equation}

The operator $(\nabla_t-i\phi)$ appears already in the work 
of Hitchin \cite{hitchinGeodesics} and was used there, as it will be here, in 
defining a ``scattering map''; see below.

\subsection {Monopoles and HE-Monopoles}   
\label{sec:HE-monopoles}
\begin{definition}\label{def:Dirac-type}
 Let $Y$ be a three-manifold, equipped with a metric, and
$p$ be a point of $Y$. Let $R$ denotes the geodesic distance to $p$. Let
$(t,x,y)$ be coordinates centred at $p$ for which the metric is in these
coordinates of the form $\id + O(R)$ as $R\to0$.  Let $\psi, \theta $ be, as
above, angular coordinates on the spheres $R=c$, so that $R,\psi,\theta $
provide standard spherical coordinates on a neighbourhood $B^3$ of $p$ defined
by the inequality $R<c$. We say that a solution to the HE-Bogomolny equations     $(E,\nabla, \phi)$ on
$Y\setminus\{p\}$ has a \emph{singularity of Dirac type, with weight
$\vec{k}=(k_1,\ldots,k_n)$ at $p$} if 
\begin{itemize}
\item the restriction of the bundle $E$ to $B^3\setminus\{p\}$ is a sum of line
bundles $L_{k_1}\oplus\cdots\oplus L_{k_n}$, and 
\item one can choose unitary trivialisations of $E$ over the  two open subsets
$\theta\neq 0$ and $\theta\neq\pi$ of $B^3$, with transition function
$\diag(e^{ik_1\psi},\ldots,e^{ik_n\psi})$, as above, in such a way that,
in both trivialisations,
\begin{align*}
\phi &= \frac {i}{2R}\diag(k_1,\ldots,k_n) + O(1),\\
\nabla (R \phi) &= O(1).
\end{align*}
\end{itemize}

A solution to the HE--Bogomolny equations that has singularities of Dirac type is called an \emph{HE-monopole}.
\end{definition}

The last condition tells us, via the HE--Bogomolny equations, that the
curvature $F$ is $O(R^{-2})$. 

As was first pointed out by Kronheimer
\cite{kronheimerMSc} for $Y =\R^3$, and expanded upon by 
\cite{pauly} for arbitrary three-manifolds, these conditions correspond to the
connection matrices and curvature being bounded at the origin (modulo gauge) 
once they are ``lifted" to $B^4$ via the Hopf map 
\begin{equation} \label{Hopfmap}
 \begin{aligned}
 B^4 &\rightarrow B^3\\ 
(w_1= u_1+ iu_2, w_2= u_3 +iu_4)&\mapsto (t, x+iy ) = (w_1\overline w_1 -
w_2\overline w_2,
2 w_1w_2)
 \end{aligned}
\end{equation} 
This lift is performed as follows:   one considers the trivial
bundle $\hat E = B^4\times \C^n$ equipped with the  $S^1$-action: 
\begin{equation}
(w_1, w_2, v_1,\ldots, v_n)\mapsto (e^{i\theta}w_1,e^{-i\theta}
w_2,e^{ik_1\theta} v_1, \ldots, e^{ik_n\theta}v_n)
\end{equation}
Then the $S^1$ quotient maps $\hat E$ to $E$, covering the map $\pi\colon B^4
\rightarrow B^3$. Pauly \cite{pauly} then shows that one can choose an
appropriate metric above so that if (1) one defines a connection matrix over $B^4$
in a lifted trivialisation over the complement of the origin as the lift $\pi^*\nabla - \xi\otimes \pi^*\phi$, where $\xi$ is a suitable one-form
on $B^4$ (in the Euclidean case, $\xi = 2(-u_2du_1+u_1du_2 - u_4du_3+u_3du_4)$); in the general case, the form is modified by a term of order 2 at the origin) and (2) one applies a gauge transformation taking one from $S^1$-invariant trivialisations to one defined at the origin,
one obtains:
\begin{proposition}\label{prop:Pauly}
The lifting process gives a correspondence between
\begin{itemize}
\item Solutions to the Bogomolny equations on $B^3\setminus\{0\}$ with a Dirac type singularity at the
origin, with weight $\vec{k}$.
\item Anti-self-dual (ASD) connections on $\hat E$, invariant under the circle
action, smooth away from the origin, and represented in a gauge in which the
action is given as above by a connection matrix which is in $L^2_3$ (and so is
continuous). 
\end{itemize}
\end{proposition}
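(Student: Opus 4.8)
The plan is to realise Proposition~\ref{prop:Pauly} as an instance of dimensional reduction along the Hopf fibration, in three stages: the equivalence of the field equations, the topological unwinding of the singularity, and — the genuine content — the regularity of the lift at the origin. For the first stage I would make the correspondence precise at the level of connections: using the Hopf map \eqref{Hopfmap}, a circle-invariant connection $\hat\nabla$ on $\hat E=B^4\times\C^n$ decomposes, relative to the fibration, into a horizontal part descending to a connection $\nabla$ on $E$ and a vertical part descending to the endomorphism $\phi$, which is exactly the prescription $\hat\nabla=\pi^*\nabla-\xi\otimes\pi^*\phi$ with $\xi$ dual to the generator of the action. I would then expand $F_{\hat\nabla}$ in terms of $F_\nabla$, $\nabla\phi$ and $d\xi$, drop the fibre derivatives by invariance, and check that, for the metric on $B^4$ chosen so that $\pi$ is away from the origin a Riemannian submersion of the required type, anti-self-duality $*F_{\hat\nabla}=-F_{\hat\nabla}$ becomes term by term the Bogomolny equation $F_\nabla=*\nabla\phi$. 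In the Euclidean model this is a direct computation with the explicit $\xi=2(-u_2du_1+u_1du_2-u_4du_3+u_3du_4)$; on a general three-manifold one invokes the order-two correction to $\xi$ and notes that it does not affect the leading behaviour.

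Next I would settle the topology of the lift. On each sphere $R=c$ the Dirac bundle restricts to $L_{k_1}\oplus\cdots\oplus L_{k_n}$ with Chern numbers $k_j$ and clutching function $\diag(e^{ik_1\psi},\ldots,e^{ik_n\psi})$. Since the Hopf circle bundle $S^3\to S^2$ is the unit circle bundle of $\OO(\pm1)$, the pullback $\pi^*\OO(k_j)$ acquires a tautological nowhere-vanishing section and is therefore trivial over $S^3$; the nontrivial clutching function downstairs is traded for the weight-$k_j$ factor $e^{ik_j\theta}$ of the circle action upstairs. This is precisely why $\hat E$ is the trivial bundle with the stated $S^1$-action, and it fixes the bijection between the weights $\vec k$ and the data of the action.

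The heart of the proof, and the step I expect to be the main obstacle, is the regularity at the origin. Written in the invariant trivialisation, both summands of the lifted connection matrix are singular: since $R=|w|^2$ one has $\phi\sim(i/2R)\diag(\vec k)=(i/2|w|^2)\diag(\vec k)$, while the Dirac potential contributes an $r^{-2}$-type term. The assertion is that after the gauge transformation carrying the $S^1$-invariant trivialisation to one defined at the origin, these singular contributions cancel and the resulting matrix lies in $L^2_3$, hence is continuous. I would isolate the leading singular terms, exhibit their cancellation using the explicit holomorphic gauges $g_0=(R-t)^{-k/2}$ and $g_\pi=(R+t)^{k/2}$ computed in Section~\ref{sec:Dirac}, and then control the first three Sobolev derivatives. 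Here the two conditions of Definition~\ref{def:Dirac-type}, namely $\phi=(i/2R)\diag(\vec k)+O(1)$ and $\nabla(R\phi)=O(1)$, are exactly what is needed: they bound the subleading behaviour of $\phi$ and its derivative, which in turn bounds the lifted connection matrix and its derivatives to the required order. Pinning down the correct gauge and verifying that the cancellation persists through three derivatives in $L^2$ is the delicate estimate, and it is the part for which I would follow Pauly's analysis in \cite{pauly} most closely.

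Finally I would establish bijectivity. The inverse map is restriction to $B^4\setminus\{0\}$ followed by the $S^1$-quotient: an invariant ASD connection whose connection matrix is $L^2_3$ descends, by the equivalence of the first stage, to a Bogomolny solution on $B^3\setminus\{0\}$; the weights of the action prescribe the Dirac weight $\vec k$, and continuity upstairs forces precisely the Dirac-type asymptotics downstairs. One then checks that lift and descent are mutually inverse up to gauge, which yields the stated correspondence and completes the argument.
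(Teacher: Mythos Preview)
The paper does not actually prove Proposition~\ref{prop:Pauly}; it is quoted as a result of Pauly \cite{pauly}, with only the surrounding text describing the set-up (the lift $\pi^*\nabla-\xi\otimes\pi^*\phi$, the gauge change to a trivialisation defined at the origin, and the remark that the resulting regularity translates to $O(R^{-1})$ bounds downstairs). So there is no in-paper proof to compare your proposal against.

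That said, your outline is a faithful account of how the argument in \cite{pauly} goes and is consistent with the paper's description of the lifting process: the decomposition of an invariant connection into $(\nabla,\phi)$, the identification of ASD with Bogomolny under the Hopf map, the trivialisation of $\pi^*L_{k_j}$ traded for the weight-$k_j$ circle action, and the regularity step as the substantive content. One small correction: your use of the holomorphic gauges $g_0,g_\pi$ from Section~\ref{sec:Dirac} is not quite the right tool for the cancellation you describe, since those are non-unitary trivialisations tailored to the scattering operator rather than to the $L^2_3$ estimate; the gauge change Pauly (and the paper) have in mind is the unitary one passing from $S^1$-equivariant frames on $B^4\setminus\{0\}$ to a frame extending over the origin, and the cancellation of the $|w|^{-2}$ pole of $\pi^*\phi\,\xi$ against the singular part of the lifted Dirac potential is checked in that gauge. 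With that adjustment your plan matches the intended argument.
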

 We note that the regularity one obtains above translates
into $O(R^{-1})$ bounds for the connection matrices and for the Higgs
field downstairs.

\begin{remark}For the Dirac monopole, this process in fact simply has a Dirac monopole correspond to a flat connection upstairs; we note that since a flat connection is anti-self-dual in any metric upstairs, we have (locally) a Dirac monopole in any metric downstairs.\end{remark}

We will return to the HE--Bogomolny equations for this lifting process later on. In the mean time, we note that in the case we are considering the metric on $B^3$ is not generic. Indeed with a coordinate $z= x+iy$ on $\Sigma$, the metric on $\R^3$ is given by
\begin{equation}
g = 
\alpha(z,\overline z) dzd\overline z +dt^2
\end{equation}
One can choose $z$  centred at the singularity, so that 
\begin{equation}\label{alpha}
\alpha(z,\overline z)  = 1 + z\overline z f(z,\overline z)     
\end{equation}
The Bogomolny equations for $S^1\times \Sigma$  are equivalent to time invariant anti-self-duality equations for $ S^1\times S^1\times \Sigma$, with the metric $g + ds^2$, where $s$ denotes the natural coordinate on the extra circle. This metric is K\"ahler with respect to the complex structure with complex coordinates $ s+it,z$. One can  think of the lifts from $B^3$ to $B^4$ in terms of a ``virtual'' lift from $S^1\times B^3$ to $B^4$, with the forms $dx, dy, dt$ lifting in the normal way and  $ds$ replaced by $\xi$. This process works quite well, in that the lift preserves bitype  for two-forms, the space of $(i,j)$ forms correspond  to $(i,j)$ forms (when $i+j = 2$). In particular, if one   uses the fact that $d\xi$ is of type $(1,1)$, then ``holomorphically integrable'' connections below (that is, satisfying the equation $[\nabla_x-i\nabla_y,\nabla_t-i\phi]=0$) correspond to holomorphically integrable connections upstairs.

\subsection{The scattering map}

\begin{definition}Let $(E,\nabla,\phi)$ be a solution
to the Bogomolny (or HE--Bogomolny) equation on the
product $I\times U$ of a interval $I$ with a possibly
open Riemann surface $U$.  Let $t_0<t_1$ be two values of $t$ in $I$. 
 The \emph{scattering map} $R_{t_0,t_1}\colon 
E_{(t_0,x,y)}\to E_{(t_1,x,y)}$ is defined  by taking for each 
$\sigma_0\in E_{(t_0,x,y)}$, the unique solution $\sigma$ of 
$(\nabla_t-i\phi)\sigma=0$ with $\sigma(t_0)=\sigma_0$; 
one then sets $R_{t_0,t_1}(\sigma_0):=\sigma(t_1)$.  
\end{definition}

If  $(E,\nabla, \phi)$ is an HE-monopole on $S^1\times \Sigma$, restricting the connection to the
surfaces $\Sigma_t$ defines a $\bar\partial$-operator $\nabla^{0,1}_\Sigma$ on
the surface, and so gives $E_t$ the structure of a holomorphic bundle. Now let
$t<t'$. The fact that $[\nabla^{0,1}_\Sigma, \nabla_t-i\phi]=0$ means that the scattering map defines a holomorphic isomorphism $R_{t,t'}$ from $(E_t)|_U$ to  $(E_{t'})|_U$, for $U$ open, as long as the set $[t,t']\times U$ does not contain one of the singular points $p_i$.
In particular, for $t_i<t<t'<t_{i+1}$, the holomorphic bundles on $\Sigma_t,\Sigma_{t'}$   are globally isomorphic. 

We
want also to pass through singularities. To understand what the parallel
transport does in that case, consider the  model Dirac monopole of charge $k$. The scattering  
map $R_{-1,1}$ from $t=-1$ to $t=1$ defined for 
$(x,y)\neq(0,0)$ takes a particularly pleasant form.
In the holomorphic trivialisations introduced in Section \ref{sec:Dirac} for which  
$(\nabla_t-i\phi)=\partial_t$, it is simply given by the transition function $z^k$, as shown by Equation (\ref{eqn:zk}). 

We now consider the asymptotics of the scattering map near a singularity for a general $\U(n)$ HE-monopole on
$S^1\times\Sigma$. We do so in a trivialisation that satisfies both 
\begin{equation}
(\nabla_t-i\phi)s=0,\quad  (\nabla_x+i\nabla_y)s= 0.\label{hol}
\end{equation}
 In particular, it is holomorphic along the surfaces $\Sigma_t$. As before, we
call such a trivialisation a \emph{holomorphic trivialisation}.

\begin{proposition}\label{prop:scattering}
In holomorphic trivialisations at $t=\pm 1$, the scattering map is of the form
\begin{equation}
h(z)\diag (z^{k_1},\ldots,z^{k_n}) g(z),
\end{equation}
with $h$ and $g$ holomorphic and invertible, and the coordinate $z$ chosen so that the singularity is at $z=0$.
\end{proposition}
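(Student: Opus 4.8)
The plan is to reduce the statement to a local question about a single meromorphic matrix-valued function of $z$, and then to determine that function by comparison with the model Dirac monopole, transported to the four-ball where the holomorphic geometry trivialises. The first observation is that in any holomorphic trivialisation the operator $\nabla_t-i\phi$ becomes $\partial_t$, so the defining equations \eqref{hol} say that the holomorphic frame is itself parallel for the scattering ODE. Consequently the scattering map across the singularity is nothing but the change-of-frame matrix $R(z)$ between a holomorphic frame adapted near $t=-1$ and one adapted near $t=+1$, exactly as the transition function $z^k$ arose for the model in \eqref{eqn:zk}. For $z\neq 0$ this $R$ is a holomorphic map into $\mathrm{GL}(n,\C)$, and everything else is independent of the choices, which only modify $h$ and $g$.

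Next I would record the algebraic packaging. If one knows that (a) $R$ extends meromorphically across $z=0$ (so $\det R\not\equiv 0$) and (b) its elementary divisors at $z=0$ are $z^{k_1},\dots,z^{k_n}$, then the theory of elementary divisors over the local ring $\C\{z\}$ of convergent power series immediately yields holomorphic invertible $h,g$ with $R=h\,\diag(z^{k_1},\dots,z^{k_n})\,g$, which is the claim. So the whole proposition reduces to (a) and (b).

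For both, I would pass through the Hopf lift of Proposition \ref{prop:Pauly}. Via \eqref{Hopfmap} the monopole near the singularity lifts to an anti-self-dual, hence holomorphically integrable, connection on $\hat E=B^4\times\C^n$ whose connection matrix is continuous at the origin; the resulting holomorphic bundle extends across the codimension-two origin and, living on a ball in $\C^2$, is holomorphically trivial. Since $\hat\nabla$ and its $\bar\partial$-operator are invariant for the weighted circle action, I would choose the trivialising frame $S^1$-equivariant with weights equal to the fibre weights $(k_1,\dots,k_n)$ at the fixed origin. In this frame the lift of a holomorphic section downstairs is an invariant holomorphic section whose $j$-th coefficient is a holomorphic function of weight $k_j$, hence built from $w_1^{k_j}$ or $w_2^{-k_j}$ times a function of $w_1w_2=z/2$. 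Tracking the two adapted frames through this bookkeeping and substituting $z=2w_1w_2$ reproduces the diagonal of weights as $\diag(z^{k_1},\dots,z^{k_n})$, generalising $g_\pi g_{\pi 0}g_0^{-1}=z^k$, and simultaneously gives meromorphy. As an independent check on the exponents, applying the argument to $\det R$, equivalently to the scattering of $\det\EE$ whose Dirac charge is $\sum_j k_j$, forces $\det R=z^{\sum_j k_j}\cdot(\text{unit})$, and running the same comparison on the exterior powers $\wedge^pR$ (whose minor-gcd orders compute the partial sums of the $m_j$) pins each individual divisor to $k_j$.

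The step I expect to be the main obstacle is the lift itself together with the separation of the individual exponents. The lift is not a naive pullback: it uses the connection matrix $\pi^*\nabla-\xi\otimes\pi^*\phi$ and a further gauge transformation regularising at the origin, so reconciling the merely $L^2_3$ regularity there with a genuine equivariant holomorphic normal form requires care, and it is precisely this $\phi$-twisting that manufactures the $w_1^{k_j}$ factors. Confirming that the off-diagonal interaction contributes only holomorphic invertible corrections $h,g$ — rather than altering the elementary divisors — is where I would use the full Dirac-type condition of Definition \ref{def:Dirac-type} (the splitting $L_{k_1}\oplus\cdots\oplus L_{k_n}$ on small spheres, which identifies the scattering with a Hecke modification of type $(k_1,\dots,k_n)$) and the ordering $k_1\geq\cdots\geq k_n$ to filter the equivariant structure near the origin.
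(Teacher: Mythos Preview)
Your approach is sound and would work, but it takes a longer route than the paper's. You reduce to showing that the transition matrix $R(z)$ is meromorphic with the prescribed elementary divisors, and propose to extract these by analysing an equivariant holomorphic frame on the lifted bundle and then checking exterior powers to pin down each exponent. The paper instead compares the given monopole $(E,\nabla,\phi)$ directly with the model sum of Dirac monopoles $(E_0,\nabla_0,\phi_0)$: one forms the induced monopole structure on $E_0^*\otimes E$, and the key observation is that the tautological section $S$ (the identity, in the unitary identification of $E$ with $E_0$ supplied by Definition~\ref{def:Dirac-type}, extended in $t$ by $(\nabla_t-i\phi)S=0$) lies in the \emph{weight-zero} subspace for the $S^1$-action upstairs on $B^4$. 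A weight-zero holomorphic $S^1$-invariant section extends holomorphically across the origin of $B^4$, and the same argument applied to $S^{-1}$ shows the extension is invertible. Since the scattering map for $E_0$ is already $\diag(z^{k_1},\ldots,z^{k_n})$, the scattering map for $E$ is $S(1,z)\,\diag(z^{k_j})\,S(-1,z)^{-1}$, and after absorbing changes of holomorphic trivialisation at $t=\pm1$ into $h,g$ one is done.

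The gain of the paper's method is that it bypasses exactly the step you flag as the main obstacle: there is no separate verification of meromorphy and no need to isolate the individual elementary divisors via exterior powers or a filtration argument. Both come for free once the comparison section $S$ is known to be holomorphic and invertible at the origin, and that in turn is a one-line consequence of its having weight zero. Your equivariant-frame bookkeeping would eventually arrive at the same place, but the tensor-with-the-model trick collapses the whole thing to a single clean observation.
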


\begin{proof}
We compare our monopole with a sum of $n$ Dirac monopoles with 
\begin{align*}
E_0 &= L_{k_1}\oplus \cdots\oplus L_{k_n}\\
\phi_0 &= \frac {i}{2R}\diag(k_1,\ldots,k_n)
\end{align*}

There is then a natural monopole structure on $E_0^*\otimes E$ with
connection $\nabla$ and Higgs fields $\phi$, and with weights
$(k_i-k_j)$. In the trivialisations used in Definition
\ref{def:Dirac-type}, 
there is a natural identification of $E$ and $E_0$; under this
identification, we choose the (holomorphic) section $S$ corresponding to the
identity map along $t=0$ (away from $z=0$), and extended outwards by
integrating $(\nabla_t-i\phi)S=0$.

As explained above, solutions to Equation (\ref{hol}) correspond to
holomorphic,
$S^1$-invariant sections upstairs on $B^4$. Our section $S$ lies in the weight
space of weight $0$, and so it extends holomorphically and invertibly (since one
can do the same for $S^{-1}$) to the origin. 

One   has the scattering map $\diag (z^{k_1},\ldots,z^{k_n})$
for $E_0$; applying $S$ gives the scattering map $S(1,z)\diag
(z^{k_1},\ldots,z^{k_n})S(-1,z)^{-1}$ for $E$. If one changes the
holomorphic trivialisations at $t=1,-1$, one has the general form given above.
 \qed\end{proof}

\section{Monopoles and stable pairs} 
\subsection{From monopoles to a bundle pair}\label{sec:montopair}
Let $(E,\nabla, \phi)$ be, as above, an HE-monopole. As we have
noted, restricting the connection to the
surfaces $\Sigma_t$ defines a $\bar\partial$-operator $\nabla^{0,1}_\Sigma$, 
and so gives $E_t$ the structure of a holomorphic bundle, denoted
$\EE_t$;
simultaneously solving
$(\nabla_t-i\phi)s=0$ and $\nabla_\Sigma^{0,1}s=0$ defines a holomorphic
isomorphism $R_{t,t'}$ from $\EE_t|_U$ to  $\EE_{t'}|_U$, for $U$ open, as long
as the set $[t,t']\times U$ does not contain any of the singular points $p_i$.
The discussions of the previous section can be summarised in:

\begin{proposition}[Kapustin--Witten \cite{geometricLanglands}]
The monopole restricted to the slice $\Sigma_t$ defines a
holomorphic $\Gl(n,\C)$ bundle $\EE_t$, away from the $p_i$.
\begin{itemize}
\item If there are no singular time $t_i$ between $t$ and $t'$,
the scattering map $R_{t,t'}\colon \EE_t\rightarrow \EE_{t'}$
is an isomorphism. 
\item If only one $t_i$  lies between $t$ and $ t' $, then
$c_1(\EE_{t'})- c_1(\EE_t) = \trk{\vec{k}_i}$, and 
$R_{t,t'}\colon \EE_t\rightarrow \EE_{t'}$ is a meromorphic bundle map
which is an isomorphism away from $z_i$, and there exist near
$z_i$ trivialisations of $\EE_{t}, \EE_{t'}$ such that $R_{t,t'}$  is given by
$\diag((z-z_i)^{k_{i,1}},\dots, (z-z_i)^{k_{i,n}})$ in these
trivialisations. (Here we abuse notation by letting $z_i$ denote both a point and its coordinate.)
\item More generally for all $t,t'$, by composition, the scattering maps $R_{t,t'}\colon \EE_t\rightarrow \EE_{t'}$ are meromorphic bundle maps which are isomorphisms away from the points $z_1,\ldots,z_N$. 
\end{itemize}
\end{proposition}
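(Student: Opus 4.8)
The plan is to establish the three bullets in order, the second resting on the first together with Proposition \ref{prop:scattering}, and the third being a formal consequence of the first two. The common foundation is the cocycle property of the scattering map: whenever the relevant parallel transports are defined, $R_{t,t''}=R_{t',t''}\circ R_{t,t'}$. This follows immediately from uniqueness of solutions of the ordinary differential equation $(\nabla_t-i\phi)\sigma=0$, since transporting $\sigma_0$ from $t$ to $t''$ must agree with transporting from $t$ to $t'$ and then from $t'$ to $t''$. I would record this first, as it lets me reduce an arbitrary pair $(t,t')$ to short intervals each crossing at most one singular time, and lets me invert: $R_{t',t}=R_{t,t'}^{-1}$ wherever both sides are defined.

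For the first bullet, suppose no singular time lies in $[t,t']$, so that $[t,t']\times U$ avoids every $p_i$ for each open $U$. Then $R_{t,t'}$ is a genuine smooth bundle isomorphism $\EE_t\to\EE_{t'}$, its inverse being $R_{t',t}$. That it is moreover holomorphic is exactly the content of the commutation relation $[\nabla^{0,1}_\Sigma,\nabla_t-i\phi]=0$: transporting a solution of $\nabla^{0,1}_\Sigma s=0$ by $(\nabla_t-i\phi)$ preserves the condition $\nabla^{0,1}_\Sigma s=0$, so holomorphic sections are carried to holomorphic sections. Applying the same reasoning to $R_{t',t}$ shows the inverse is holomorphic, whence $R_{t,t'}$ is a holomorphic isomorphism.

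For the second bullet, with a single singular time $t_i$ in $(t,t')$, I would treat the two regions separately. Over $\Sigma\setminus\{z_i\}$ the slab $[t,t']\times(\Sigma\setminus\{z_i\})$ meets no $p_j$, so the first bullet makes $R_{t,t'}$ a holomorphic isomorphism there. Near $z_i$ I would invoke Proposition \ref{prop:scattering}: after translating so that the singular time is interior and using the cocycle property to reduce to the model situation, the scattering map takes the form $h(z)\diag((z-z_i)^{k_{i,1}},\ldots,(z-z_i)^{k_{i,n}})g(z)$ with $h,g$ holomorphic and invertible. Absorbing $g$ and $h$ into the local holomorphic trivialisations of $\EE_t$ and $\EE_{t'}$ puts $R_{t,t'}$ into the asserted diagonal normal form, and in particular exhibits it as a meromorphic bundle map, holomorphic and invertible away from $z_i$. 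The degree statement is then a determinant count: $\det R_{t,t'}$ is a meromorphic section of $(\det\EE_t)^\ast\otimes\det\EE_{t'}$, nonvanishing off $z_i$ and of total order $\sum_j k_{i,j}=\trk{\vec{k}_i}$ at $z_i$; since the degree of a line bundle is the total order of any meromorphic section, $c_1(\EE_{t'})-c_1(\EE_t)=\trk{\vec{k}_i}$.

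For the third bullet I would choose intermediate times $t=s_0<s_1<\cdots<s_m=t'$ so that each subinterval $(s_{j-1},s_j)$ contains at most one singular time, write $R_{t,t'}=R_{s_{m-1},s_m}\circ\cdots\circ R_{s_0,s_1}$ by the cocycle property, and observe that each factor is, by the first two bullets, a meromorphic bundle map that is an isomorphism away from a single $z_i$ (or everywhere). Since the $z_i$ are distinct, the composite is meromorphic and an isomorphism on $\Sigma\setminus\{z_1,\ldots,z_N\}$. The main point requiring care is the local normal form near a singularity, but this has already been supplied by Proposition \ref{prop:scattering}; the only genuine work left is bookkeeping---checking that the order of $\det R_{t,t'}$ reproduces $\trk{\vec{k}_i}$, and that the reduction to the model case is legitimate under the general metric $\alpha=1+z\bar z f$ discussed above, for which the lifting of Proposition \ref{prop:Pauly} continues to apply.
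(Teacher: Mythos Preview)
Your proposal is correct and follows the same approach as the paper, which simply states that the proof ``is straightforward and follows from Proposition \ref{prop:scattering}.'' You have spelled out in detail exactly the argument the paper leaves implicit: the cocycle property, holomorphicity from $[\nabla^{0,1}_\Sigma,\nabla_t-i\phi]=0$, the local normal form from Proposition \ref{prop:scattering}, and the degree jump via $\det R_{t,t'}$.
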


\begin{proof}
 The proof is straightforward and follows from Proposition \ref{prop:scattering}. 
\qed\end{proof}

In particular, integrating around the circle, we have:

\begin{definition} The \emph{monodromy} $\rho_t$ of $\EE_t$ is the map
$R_{t,t+T}$.  
\end{definition} 

The monodromy $\rho_t$ is a meromorphic endomorphism of $\EE_t$.  It has singularities near
$z_i$ of the form $F(z)\diag((z-z_i)^{k_{i,1}},\dots, (z-z_i)^{k_{i,N}}) G(z)$,
where $F(z), G(z)$ are
holomorphic and invertible.

\begin{definition}\label{def:bundlepair} A \emph{bundle pair} $(\EE,\rho)$ is the 
datum of a holomorphic bundle $\EE$ on $\Sigma$ and a meromorphic endomorphism $\rho\colon
\EE\rightarrow \EE$ such that $\rho$ is an isomorphism outside of a finite set
of points.
\end{definition}

Thus $(\EE_t,\rho_t)$ is a bundle pair. For a bundle pair $(\EE,\rho)$, let us
suppose that $\rho$ fails to be regular at $p$.
 If the rank of $\EE$ is $n$, Iwahori's theorem
(see \cite[Chap.\ 8]{Pressley-Segal} and \cite{Iwahori-Matsumoto})
tells us that
near the singular point $p$, choosing a trivialisation, and a coordinate $z$ centred at $p$, one can find
invertible holomorphic matrices $F(z), G(z)$ and integers  such that the map
$\rho$ in our trivialisation factors as
\begin{equation}
\rho = F(z) \diag (z^{\ell_1},\ldots,z^{\ell_n})G(z),
\end{equation}
with the set $\vec{\ell}=\{\ell_1,\ldots,\ell_n\}$ as invariants.    

In the case that concerns us here, we adapt the notation given in
the introduction and define
\begin{definition} A bundle pair $(\EE,\rho)$ has type
$\bbk=\bigl((\vec{k}_1,z_1),\ldots,(\vec{k}_N,z_N)\bigr)$ if its non-regular
points are  the points $z_1,\ldots ,z_N$, and the map $\rho$ is of
the form  $F(z)\diag (z^{k_{j1}},\ldots,z^{k_{jn}})G(z)$, with $F$ and
$G$ invertible,  near $z_j$, in coordinates $z$ centred at $z_j$.
\end{definition}

We define the bundle pair associated to a monopole 
$(E,\nabla,\phi)$ as \[\HH(E,\nabla,\phi):=(\EE_0,\rho_0).\] 
From what precedes, we see that for the monopoles we are
studying, $\HH(E,\nabla,\phi)$ has type $\bbk$.

\subsection{Constraints coming from the U(1) case}
Let $Y=(S^1\times \Sigma)\setminus \{p_1,\ldots,p_N\}$.  If $(E,A,\phi)$ 
is a $\U(n)$ singular monopole on $Y$ with a singularity of weight
$\vec{k}_j$ at $p_j$, then $(\det(E),\tr(A),\tr(\phi))$  
is a $\U(1)$ singular monopole on $Y$ with a singularity of weight $\trk{\vec{k}_j}$ at $p_j$.
We shall see that there are constraints on $\U(1)$-monopoles, in particular on the weights at the
singularities and  the location of the singular
points; these restrictions  propagate to arbitrary monopoles by taking traces.

We consider a $U(1)$ monopole, that is, a triple $(E,\nabla,\phi)$, considered modulo
gauge transformations: 

1) $\phi$ is a harmonic purely imaginary function  ($d*d\phi=0$), of the form $ik_j/2r+O(1)$ 
 near the points $p_j$, and otherwise regular, ($r$ is the geodesic distance to the singularity)

2) $\nabla = d+A$ is a $\U(1)$-connection satisfying $dA= *d\phi$.

Note that once the location  of the singular points and the charges $k_j$ are
fixed, the function $\phi$ is  unique up to a constant. 

Let $r$ be small, and let $S^2_j(r)$ denote the sphere of radius $r$ around
$p_j$, and let $Y_r$
denote the subset of points in $Y$ that are at least distance $r$ from the
singular set.  Thus $\partial Y_r=-\bigcup_j S^2_j(r)$. 
We have
\begin{align*}
 \sum_{j=1}^N k_j &=\sum_{j=1}^N
\frac{i}{2\pi}\int_{S^2_j(r)}F_A\\ 
&=-\frac{i}{2\pi}\int_{\partial Y_r}*d\phi\\ 
&=-\frac{i}{2\pi}\int_{Y_r}d*d\phi = 0.
\end{align*}
By extension, all singular $\U(n)$ monopoles on $Y$ must satisfy
\begin{equation}\label{eqn:condition0}\sum_{j=1}^N \trk{\vec{k}_j}=0.
\end{equation} 
This equation is also satisfied by HE-monopoles.

There are also constraints on the locations of the singularities. Since the monodromy $\rho_t$ is a meromorphic bundle map, its determinant is a meromorphic function, hence its divisor is principal and therefore the locations of the singularities are constrained in the direction along $\Sigma$.  They are, in fact, also constrained in the circle direction.

\begin{proposition}  
For a singular $\U(n)$-monopole, we must have 
\begin{equation}\label{eqn:condition1}
\sum_{j=1}^N \trk{\vec{k}_j}t_j =c_1(E|_{\{0\}\times\Sigma}) T.\end{equation}    
For a singular $\U(n)$ HE-monopole with constant $C$, 
we must have
\begin{equation} \label{eqn:condition1HE} 
\sum_{j=1}^N \trk{\vec{k}_j}t_j =
T\Bigl(c_1(E|_{\{0\}\times\Sigma})
+\frac{Cn}{2\pi}\mathrm{Vol}(\Sigma)\Bigr).
\end{equation}
\end{proposition}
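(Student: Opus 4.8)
The plan is to follow the route announced just above: reduce to the abelian case by taking determinants, and then extract the constraint by integrating the ``variational'' component of the HE--Bogomolny equation once over each slice $\Sigma_t$ and once around the circle. Passing to $(\det E,\tr A,\tr\phi)$ produces a $\U(1)$ HE-monopole on $Y$ with singularity weights $\trk{\vec k_j}$ and with $\deg(\det E_0)=c_1(E|_{\{0\}\times\Sigma})$; taking the trace of the component equation $F_\Sigma-\nabla_t\phi=iC\id_E$ replaces $iC$ by $iCn$, which is exactly the origin of the factor $n$ in (\ref{eqn:condition1HE}). It therefore suffices to prove, for a $\U(1)$ HE-monopole with weights $k_j$, degree $k_0$ and constant $C'$, the identity $\sum_j k_j t_j = T\bigl(k_0+\tfrac{C'}{2\pi}\mathrm{Vol}(\Sigma)\bigr)$, and then to set $k_j=\trk{\vec k_j}$, $k_0=c_1(E|_{\{0\}\times\Sigma})$ and $C'=Cn$; the Bogomolny case (\ref{eqn:condition1}) is $C'=0$.

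For the abelian computation I would use the scalar form $F_\Sigma-\partial_t\phi=iC'$ of the equation (the commutator term drops since everything is abelian, so $\nabla_t\phi=\partial_t\phi$ for the imaginary-valued function $\phi$). Multiplying by the K\"ahler form $\omega$ and integrating over $\Sigma_t$, Chern--Weil gives $\int_{\Sigma_t}F_\Sigma\,\omega=-2\pi i\,c_1(E_t)$, so with $\Phi(t):=\int_{\Sigma_t}\phi\,\omega$ (well defined since $\omega$ is $t$-independent for the product metric) the equation becomes $-2\pi i\,c_1(E_t)-\Phi'(t)=iC'\mathrm{Vol}(\Sigma)$ for $t\neq t_j$. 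Since the slice degree is the step function $c_1(E_t)=k_0+\sum_{t_j<t}k_j$ and $\sum_j k_j=0$ by (\ref{eqn:condition0}), one computes $\int_0^T c_1(E_t)\,dt=k_0 T-\sum_j k_j t_j$. Integrating the scalar identity over $t\in[0,T]$ then gives $-2\pi i\bigl(k_0T-\sum_j k_j t_j\bigr)-\int_0^T\Phi'(t)\,dt=iC'\mathrm{Vol}(\Sigma)\,T$, so everything reduces to showing $\int_0^T\Phi'(t)\,dt=0$.

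The hard part, and the only place where the global geometry of the circle enters, is precisely this vanishing: it expresses that $\Phi$ is a genuinely periodic function of $t$, reflecting the single-valuedness of $\phi$ on $S^1$. The obstacle is that $\phi\sim i k_j/2R$ blows up at each $p_j$, so one must verify that $\Phi$ nevertheless extends continuously through the singular times. I would isolate the singular contribution $\frac{ik_j}{2}\int_{|z-z_j|<\epsilon}\frac{\omega}{\sqrt{(t-t_j)^2+|z-z_j|^2}}$ and observe that this transverse integral is finite and even in $t-t_j$, hence continuous at $t_j$, with only a finite jump in its $t$-derivative; thus $\Phi$ is continuous and piecewise $C^1$ with integrable derivative, and the jump of $\Phi'$ at $t_j$ is $-2\pi i k_j$, consistent with the jump of $c_1(E_t)$. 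The fundamental theorem of calculus then applies on each subinterval and telescopes, using $\Phi(T)=\Phi(0)$, to give $\int_0^T\Phi'\,dt=0$.

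Substituting this into the previous display and dividing by $-2\pi i$ gives $k_0T-\sum_j k_j t_j=-\frac{C'}{2\pi}\mathrm{Vol}(\Sigma)\,T$, the abelian identity; specializing $C'=Cn$ recovers (\ref{eqn:condition1HE}) and $C'=0$ recovers (\ref{eqn:condition1}). It is worth remarking that the periodicity of $\Phi$ is exactly what distinguishes the circle from an interval: on $I\times\Sigma$ the term $\int\Phi'$ would contribute the boundary difference $\Phi(t_1)-\Phi(t_0)$ instead of vanishing, so no closed constraint on the $t_j$ appears, consistent with the interval discussion of Section~6. By contrast, a naive attempt to argue directly from $F_\nabla=*\nabla\phi$ by integrating $t\,{*}d\phi$ (after cutting the circle) merely reproduces the Chern--Weil relation and is circular; it is the use of the first-order component equation together with the periodicity of $\phi$ that supplies the genuine content.
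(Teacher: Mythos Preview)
Your proof is correct and follows essentially the same route as the paper: reduce to $\U(1)$ by taking traces, identify $\int_{\Sigma_t}\partial_t\phi\,\omega$ with (a constant shift of) $-2\pi i\,c_1(E_t)$ via the equation, and then use periodicity of $\phi$ around $S^1$ to force $\int_0^T c_1(E_t)\,dt=k_0T-\sum_j k_j t_j$ to equal the HE correction. The paper compresses the singularity handling into the phrase ``removing small cylinders around the singular points and taking a limit'' where you instead verify continuity of $\Phi(t)=\int_{\Sigma_t}\phi\,\omega$ directly; these are two packagings of the same estimate, and your explicit treatment of the HE constant $C$ simply spells out what the paper leaves implicit.
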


\begin{proof}
Let's start with the $\U(1)$ case. Because we want $*d\phi=F_A$, the integral of
$\frac{i}{2\pi}*d\phi$ on any compact $2$-cycle must be an integer, indeed the
first Chern class of the restriction of the bundle to that $2$-cycle.
This condition imposes constraints on the location of the singular points.

Let $p_i=(t_i,z_i)$.  Suppose for notational simplicity that none of the $t_i$
are $0$ and that the $t_i$ are distinct, and let
\begin{equation}
 \label{eqn:defk0}k_0=\frac{i}{2\pi}\int_{\{0\}\times\Sigma}*d\phi.
\end{equation} 
Moving $t$ past the singular points
$t_1,\ldots,t_j$ in turn, the integral
$\frac{i}{2\pi}\int_{\{t\}\times\Sigma}*d\phi$ becomes $k_0+k_1+\cdots+k_j$.
On the other hand, one has $0=\int_{S^1}\partial_t \phi$ in the circle
orthogonal to the surface, away from the singular points. 
Let us suppose that the length of the circle is $T$, and set $t_{n+1}=T$, $t_0=0$.
Integrating over $S^1\times \Sigma$  (removing small cylinders around the
singular points and taking a limit), we find that
\begin{align*}0 &=\sum_{i=1}^{N+1}\bigl(\sum_{j=0}^{i-1}k_j\bigr)(t_i-t_{i-1})
		=\sum_{j=0}^N\sum_{i=j+1}^{N+1}k_j(t_i-t_{i-1})
		=\sum_{j=0}^Nk_j(T-t_j)
		= k_0T -\sum_{j=1}^N k_jt_j.\end{align*}
Equation (\ref{eqn:condition1}) follows easily. The general $U(n)$ case follows by taking traces.
\qed\end{proof}

As a corollary of the proposition, the geometric interpretation for the constant $C$ is 
\begin{equation}\label{eqn:geometricC}
C=\bigl(-\frac{2\pi}{\mathrm{Vol}(\Sigma)}\bigr)\Biggl(\frac{\displaystyle c_1(E|_{\{0\}
\times \Sigma} )-\frac{\sum_{j=1}^N\trk{\vec{k}_j}t_j}{T}}{n}\Biggr).
\end{equation}
Therefore $C$ is determined by the location of the  singularities.

We pause for a few remarks that underline the strong parallels that exist
between the monopole geometry and the complex geometry in the Abelian case.  
Let us first see how one can deform from a given solution, fixing the locus of
the singularities. 

For the monopoles, in the $\U(1)$ case, once one has $\phi$,
one has $dA$ and hence the connection $A$ up to a closed imaginary form. The equivalence relation on connections is modification by an imaginary exact form.
Thus the deformations of $A$ are parameterised locally by $2g+1$
real parameters, the dimension of $H^1(S^1\times\Sigma,\R)$. The free
parameters correspond to the integrals of $A$ around cycles; one can modify $A$
by adding to it lifts of imaginary harmonic forms on the Riemann surface, and
multiples of the form $i dt$. In addition, one can add an imaginary constant to
$\phi$; that is all the freedom one has, since the difference of any two $\phi$
with the same asymptotics is bounded, and so a constant, by elliptic regularity
and the maximum principle.
In short, there is modulo gauge transformations a $2g+2$ real parameter space of
pairs $(A,\phi)$ once one has fixed the singularities.

The monopole yields the complex geometric data of a line bundle on
$\Sigma_0$, given by the restriction of the connection to $\Sigma_0$, and so an
element of the Jacobian; it also gives the monodromy $\rho$, which is independent of the line bundle and
which 
once the divisor is fixed is determined up
to a non-zero complex constant. In short, one sees that   
the moduli space of complex data is a $\C^*$-bundle over the Jacobian, and so has the same dimension as our monopole moduli. 

The constraints on the location of the singularities are similar also: the
locations of the poles in the $t$-direction are determined in effect by the 
integral of $dt$ from some base point. In the direction of the Riemann surface
$\Sigma$, the divisor $\sum_j k_jz_j $ must be principal, and    Abel's
theorem requires that its image under the Abel map lie in the period lattice.
These constraints on periods can also be seen to intervene in the construction
of an Abelian monopole, in a way similar to the way they intervene in the
classical proof of the existence of a holomorphic function in Abel's theorem
(see \emph{e.g.}, \cite[p.232]{gharris}). Indeed, in both cases one builds a logarithmic derivative of the Higgs field or of the function; in both cases one first gets the right singularities, then adjusts so that one has integer periods. A complete proof for the Abelian monopoles would lead us into too
long a digression, and in any case the general proof valid also in the
non-Abelian case is the subject of this paper.

\subsection{The stability conditions}\label{sec:stability}
We now define an appropriate notion of stability for our holomorphic objects.
Set 
\begin{equation} \vec{t} = (t_1,t_2,\ldots,t_N,T),\quad
0<t_1\leq t_2\leq\cdots\leq t_N\leq T.\end{equation}

\begin{definition}\label{defdegree} The $\vec{t}$-degree $\delta_{\vec{t}}(\EE,\rho)$ of a 
bundle pair $(\EE,\rho)$ of singular type $\bbk$
is defined by 
\begin{equation}
\delta_{\vec{t}}(\EE,\rho)=c_1(\EE) - \frac{\sum_{j=1}^{N}
\trk{\vec{k}_j}t_{j}}T.\end{equation} The $\vec{t}$-slope of a bundle pair
$(\EE,\rho)$ of rank $n$ is the quotient
\[\mu_{\vec{t}}(\EE,\rho)=\delta_{\vec{t}}(\EE,\rho)/n.\]\end{definition}

\begin{definition} A   bundle pair $(\EE,\rho)$  is \emph{$\vec{t}$-stable
($\vec{t}$-semi-stable)} if
any proper non-trivial $\rho$-invariant subbundle $\EE'$ 
satisfies  $\mu_{\vec{t}}(\EE',\rho)  < (\leq)
\mu_{\vec{t}}(\EE,\rho)$. A bundle pair is $\vec{t}$-polystable if it
is the sum of stable bundle pairs of equal $\vec{t}$-slope.\end{definition}

\begin{remark}\label{independence} The notion of  $\vec{t}$-degree, and hence 
stability, is
invariant under shifting the origin in the circle, as
$\sum_{j}\trk{\vec{k}_j}= 0$; it is also
invariant as one moves through the singularities,
as going through the point $p_j$ changes $c_1(\EE)$ by $\trk{\vec{k}_j}$, but
also shifts $t_j$ by $-T$. Indeed, if $(\EE,\rho)$ is obtained from an HE-monopole $(E,\nabla,\phi)$, then rewriting the degree as 
\[\delta_{\vec{t}}(\EE,\rho)=\frac{\displaystyle\sum_{j=1}^{N}(t_{j+1}-t_j)\bigl(c_1(\EE)+\sum_{i\leq j} \trk{\vec{k}_i}\bigr)}T,\]
one sees that $\delta_{\vec{t}}(\EE,\rho)$ is the \emph{average degree} (average in $t$) of the restrictions to $\{t\}\times \Sigma$ of the bundle $E$.
\end{remark}

In this paper we show the equivalence between $\vec{t}$-polystable bundle pairs and HE-monopoles 
with Dirac singularities.  We proceed inductively on the rank. We now prove it in one direction,
showing that an HE-monopole yields a $\vec{t}$-polystable pair. Note that stability is automatic in 
the case of rank one.

\begin{definition}
 A HE-monopole $(E',\nabla',\phi')$ with constant $C'$ is a sub-HE-monopole 
of $(E,\nabla,\phi)$ (with constant $C$) if $E'$ is a subbundle of $E$
preserved by $\nabla$ and $\phi$ and if $\nabla$ and $\phi$ 
 restricted to $E'$ are $\nabla'$ and $\phi'$\end{definition}
In particular, $C=C'$.

\begin{proposition} A bundle pair $(\EE,\rho)$  on $\Sigma$ corresponding through $\HH$ 
to an HE-monopole $(E,\nabla,\phi)$ on $S^1\times \Sigma$ with constant $C$ is 
$\vec{t}$-polystable, and $\vec{t}$-stable if $(E,\nabla,\phi)$ is irreducible, that is it 
admits no sub-HE-monopole.
\end{proposition}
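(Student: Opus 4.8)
The plan is to establish the contrapositive-style correspondence between destabilizing subobjects and sub-HE-monopoles. The key observation is that a $\rho$-invariant holomorphic subbundle $\EE'\subset\EE=\EE_0$ of the bundle pair should correspond to a sub-HE-monopole $(E',\nabla',\phi')\subset(E,\nabla,\phi)$, and I would want to show that the $\vec t$-slope of such a subobject equals the average degree of the restrictions of $E'$, exactly as Remark \ref{independence} identifies $\delta_{\vec t}(\EE,\rho)$ with the average degree of the restrictions of $E$. Once both slopes are expressed as averages over $t\in[0,T]$ of the degrees of holomorphic subbundles $\EE'_t\subset\EE_t$, the stability inequality reduces to a pointwise (in $t$) statement about the restricted monopole.

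First I would set up the dictionary. Given a $\rho_0$-invariant subbundle $\EE'_0\subset\EE_0$ on $\Sigma_0$, I would propagate it along $t$ using the scattering map: since $R_{0,t}$ is a holomorphic bundle map (meromorphic where it crosses a singular time, but still defining a coherent subsheaf), $\EE'_0$ generates a holomorphic subbundle $\EE'_t\subset\EE_t$ for each non-singular $t$, and $\rho_0$-invariance guarantees that propagating all the way around the circle returns $\EE'_0$ to itself. This gives a $t$-family of holomorphic subbundles, i.e.\ a subbundle $E'$ of $E$ over $S^1\times\Sigma$ away from the singularities, preserved by $\nabla^{0,1}_\Sigma$ and by $\nabla_t-i\phi$. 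Then, as in the standard Hermitian--Einstein argument, I would take the orthogonal complement and project to build the full $\nabla$- and $\phi$-invariant structure. The central analytic point is that invariance under the holomorphic structure together with the HE-Bogomolny equation forces the subbundle to be genuinely $\nabla$- and $\phi$-parallel, so that $(E',\nabla',\phi')$ is a sub-HE-monopole with the same constant $C$.

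The slope comparison is then the heart of the matter. Using the average-degree reformulation of $\delta_{\vec t}$, and the fact (from the $\U(1)$ constraint, Equation \eqref{eqn:condition1HE} applied to $\det E'$) that $C$ is determined by the singularity data, I would compute $\mu_{\vec t}(\EE'_0,\rho_0)$ as the $t$-average of $c_1(\EE'_t)/\rank(\EE')$. The Chern--Weil / Chern--Gauss--Bonnet computation on each slice, integrated against the HE-Bogomolny equation $F_\Sigma-\nabla_t\phi=iC\id$, should then show that $\mu_{\vec t}(\EE',\rho)\le\mu_{\vec t}(\EE,\rho)$, with equality precisely when the second fundamental form of $E'\hookrightarrow E$ vanishes, i.e.\ when $E'$ is a genuine sub-HE-monopole summand. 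This is the usual mechanism by which a Hermitian--Einstein metric forces semistability, adapted to the monopole setting; equality of slopes with a nonzero second fundamental form would contradict the $L^2$-norm identity coming from integrating the HE equation.

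Finally, equality of slopes forces the subbundle to split off as a direct summand, yielding polystability, and irreducibility (no proper sub-HE-monopole) yields strict inequality and hence stability. The step I expect to be the main obstacle is the analysis near the Dirac singularities $p_i$: the second fundamental form and the integration by parts must be controlled despite the $O(R^{-1})$ blow-up of $\phi$ and the $O(R^{-2})$ curvature, so the boundary terms on the small spheres $S^2_j(r)$ must be shown to vanish as $r\to0$. I would handle this using Proposition \ref{prop:Pauly}, passing to the lifted ASD connection on $B^4$ where the data is $L^2_3$ and hence the subbundle and its second fundamental form extend with enough regularity to kill the boundary contributions. Verifying that the propagated subbundle $\EE'_t$ has the correct Iwahori-type behaviour at each $z_i$ (so that $(\EE'_0,\rho_0)$ is itself a bundle pair of an admissible singular type, and its $\vec t$-degree is well defined) is the other delicate bookkeeping point.
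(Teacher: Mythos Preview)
Your approach is the classical second-fundamental-form argument (\`a la L\"ubke--Teleman), and the paper takes a genuinely different route. Rather than propagating the subbundle $\EE'$ through the scattering map and analysing the second fundamental form of the resulting $C^\infty$ subbundle $E'\subset E$, the paper argues \emph{inductively on rank}: given a $\rho$-invariant proper subbundle $\VV\subset\EE$, chosen $\vec t$-stable, it invokes the converse direction (Proposition~\ref{prop:surjective}) in lower rank to produce an \emph{independent} HE-monopole $(V,\nabla_V,\phi_V)$ with its own constant $D$, realising $(\VV,\rho|_\VV)$. The inclusion $\VV\hookrightarrow\EE$ then becomes a global holomorphic section $s$ of the HE-monopole $V^*\otimes E$ (constant $C-D$), and a Bochner identity on $s$---using only $(\nabla_t-i\phi)s=0$ and $\nabla^{0,1}_\Sigma s=0$---gives $(C-D)\|s\|_{L^2}^2\le 0$, with equality iff $s$ is covariant constant and intertwines the Higgs fields. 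Formula~\eqref{eqn:geometricC} then converts $C\le D$ into $\mu_{\vec t}(\VV,\rho)\le\mu_{\vec t}(\EE,\rho)$. What this buys is precisely what you flag as your main obstacle: since the comparison object $V$ is itself a genuine HE-monopole with Dirac singularities, the behaviour of $s$ near the $p_i$ is tame by construction, and the integration by parts goes through without having to control a second fundamental form whose regularity near the singularities is a priori unclear. The paper remarks on this trade-off explicitly after the proof.

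One correction to your write-up: the sentence ``invariance under the holomorphic structure together with the HE-Bogomolny equation forces the subbundle to be genuinely $\nabla$- and $\phi$-parallel, so that $(E',\nabla',\phi')$ is a sub-HE-monopole with the same constant $C$'' is premature and, as stated, false. A holomorphic subbundle of an HE bundle carries an induced Chern connection by orthogonal projection, but it is not $\nabla$-parallel unless the second fundamental form vanishes---which is exactly the equality case you are trying to establish, not an input. You self-correct in the following paragraph, but the dictionary step should only claim that $E'$ is a smooth subbundle preserved by $\nabla^{0,1}_\Sigma$ and by $\nabla_t-i\phi$.
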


\begin{proof}  
Let $\VV$ be a $\rho$-invariant proper subbundle of $\EE$.  We can choose this
subbundle to be $\vec{t}$-stable.
By Proposition \ref{prop:surjective},  it corresponds to an HE-monopole
$(V,\nabla,\phi)$ with constant $D$, and singularities of weight $\vec{l}_j$ at
the point $p_j$.  We do not assume that $V$ is a submonopole of $E$, although
it is clearly a subbundle of $E$ by construction. The HE-monopole
$(V^*\otimes E,\nabla,\phi)$ 
has constant
$C-D$, and has a global $\rho$ invariant section $s$. 

Let $\nabla_\Sigma$ denote the restriction of the connection to the Riemann
surface, and $\Delta_\Sigma$ the associated Laplacian. Using the  identities
\begin{equation}\label{eqn:identities}
\begin{aligned}\nabla_\Sigma^{1,0}\nabla_\Sigma^{0,1} &=  (\Delta_\Sigma
+iF_{\Sigma})\omega\\  
(\nabla_t+i\phi)(\nabla_t-i\phi) &=\nabla_t^2 +\phi^2   -i \nabla_t\phi,
\end{aligned} 
\end{equation} 
 we find that for the section $s$ on $\Sigma$,
\begin{align*}
 (C-D)|s|^2_{L^2}&=\int_{S^1\times \Sigma}
\scp{s,i(\nabla_t\phi-F_{\Sigma})s}\dvol\\
&\leq
  \int_{S^1\times\Sigma}\bigl(\scp{s,i\nabla_t\phi
s}+|\nabla_ts|^2+|\phi s|^2\bigr)\dvol
+\int_{S^1\times\Sigma}\bigl(\scp{s,-iF_{\Sigma}s}
+|\nabla_\Sigma s|^2\bigr)\dvol\\
&=\int_{S^1\times\Sigma}
\scp{s,(i\nabla_t\phi-\phi^2-\nabla_t^2)
s+(-iF_{\Sigma}-\Delta_\Sigma)s}\dvol\\
&=-\int_{S^1\times\Sigma}
\scp{s,(\nabla_t+i\phi)(\nabla_t-i\phi)s+
\omega^{-1}\nabla_\Sigma^{1,0}\nabla_\Sigma^{0,1}s}
\dvol\\
&=0.
\end{align*}

The third step involves an integration by parts; one checks that this causes
no difficulties at the singularities. Using the geometric interpretation of the HE constants given by
Equation
(\ref{eqn:geometricC}), we obtain
$\mu_{\vec{t}}(\VV,\rho)\leq \mu_{\vec{t}}(\EE,\rho)$
with equality only if the section $s$ is covariant constant
and intertwines the Higgs fields for $E$ and $V$, hence if $V$ is a
sub-HE-monopole.  Since the orthogonal
complement of $V$ is then also a sub-HE-monopole of constant $C$, its image $\VV$
in $\EE$
is also preserved by $\rho$ and we can therefore by induction decompose $\EE$
in a sum of $\rho$-invariant $\vec{t}$-stable bundles of same slope and
$(\EE,\rho)$ is $\vec{t}$-polystable.
The proof is now complete.\qed\end{proof}

We note that in other situations of this type of correspondence (see, \emph{e.g.}, Lubke--Teleman
\cite{Lubke-Teleman}), the proof of this direction
does not assume the converse for bundles of lower rank,
but simply uses an integral of what looks like a second
fundamental form. The approach used here allows us to simplify  
dealing with the asymptotics at the singularity.

\section{Equivalence between stable pairs and monopoles}
In this section, we prove Theorem \ref{thm:main} given in the Introduction.

\begin{theorem}\label{thm:main2}
Suppose that the points $p_1,\ldots,p_N\in S^1\times \Sigma$ project to $N$
different points on $\Sigma$. The map
\begin{equation}\label{eqn:themap}
 \begin{aligned}
  \HH\colon \MM_{k_0}^{ir}(S^1\times\Sigma,p_1,\ldots,p_N,\vec{k}_1,\ldots,\vec{k}_N)
       &\to\MM_{s}(\Sigma,k_0,\bbk,\vec{t})\\
(E,\nabla,\phi)&\mapsto (\EE_0,\rho_0)
 \end{aligned}
\end{equation} 
between the moduli space of irreducible HE-monopoles and the moduli space of
$\vec{t}$-stable pairs
described by Section \ref{sec:montopair} is a bijection.
More generally, 
the reducible HE-monopoles  correspond     bijectively to 
 $\vec{t}$-polystable, but unstable, pairs.
\end{theorem}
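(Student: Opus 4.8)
The plan is to prove the bijection in three steps: that $\HH$ is well-defined with image in the $\vec{t}$-stable locus, that it is surjective, and that it is injective; the reducible/polystable case then follows by applying the same arguments summand by summand. The first step is exactly the preceding proposition, which shows that an irreducible HE-monopole produces a $\vec{t}$-stable pair (and a reducible one a polystable but unstable pair), the matching of the monopole constant $C$ with the $\vec{t}$-slope being recorded in Equation (\ref{eqn:geometricC}). What remains is to construct an inverse and to verify that the two constructions undo one another.

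For surjectivity, given a $\vec{t}$-stable pair $(\EE,\rho)$ of type $\bbk$ I would first build the underlying complex monopole data over $S^1\times\Sigma\setminus\{p_1,\ldots,p_N\}$. Over the interval $[0,T]$ one takes the bundle holomorphically constant in $t$ — so that $\nabla_t-i\phi=\partial_t$ between singular times — and performs at each $(t_i,z_i)$ the Hecke modification along $z_i$ prescribed by the weights $\vec{k}_i$, realizing $\diag((z-z_i)^{k_{ij}})$ as the local scattering exactly as in Proposition \ref{prop:scattering}; one then glues the slice at $t=T$ back to the slice at $t=0$ by $\rho$. The divisor condition on $\det\rho$ guarantees that the degree jumps $\trk{\vec{k}_i}$ close up consistently around the circle, and the resulting operators satisfy the integrability $[\nabla^{0,1}_\Sigma,\nabla_t-i\phi]=0$ by construction. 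This fixes a complex gauge orbit; solving the remaining equation $\Lambda F_{\hat\nabla}=iC$ for a Hermitian metric in that orbit is the existence result, Proposition \ref{prop:surjective}, whose solution uses the $\vec{t}$-stability hypothesis. That the metric produced has genuine Dirac-type singularities of weight $\vec{k}_i$ at $p_i$ is read off from the local model of Section \ref{sec:Dirac} together with the regularity of the lift to $B^4$ of Proposition \ref{prop:Pauly}. By construction $\EE_0=\EE$ and the monodromy is $\rho$, so $\HH$ returns $(\EE,\rho)$.

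For injectivity, suppose two irreducible HE-monopoles $(E,\nabla,\phi)$ and $(E',\nabla',\phi')$ satisfy $\HH(E,\nabla,\phi)\cong\HH(E',\nabla',\phi')$, witnessed by a holomorphic pair isomorphism $\psi_0\colon\EE_0\to\EE'_0$ with $\psi_0\rho_0=\rho'_0\psi_0$. Using the scattering maps I would transport $\psi_0$ along the circle, setting $\psi_t=R'_{0,t}\circ\psi_0\circ R_{0,t}^{-1}$; this is holomorphic along each $\Sigma_t$ and intertwines $\nabla_t-i\phi$ with $\nabla'_t-i\phi'$, and the relation $\psi_0\rho_0=\rho'_0\psi_0$ is precisely the compatibility needed for $\{\psi_t\}$ to close up into a global complex-linear isomorphism $E\to E'$ identifying the two complex gauge orbits. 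Both monopoles then solve $\Lambda F=iC$ within the same orbit, and the uniqueness of the HE metric up to unitary gauge — established by the same computation with the identities (\ref{eqn:identities}) and the integration-by-parts/maximum-principle argument as in the preceding proposition, so that the self-adjoint endomorphism $\psi^*\psi$ is forced to be covariant constant, hence scalar by irreducibility — shows that $(E,\nabla,\phi)$ and $(E',\nabla',\phi')$ are unitarily gauge equivalent. The polystable case is handled by decomposing into $\vec{t}$-stable summands of equal slope and applying the irreducible case to each.

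The main obstacle is the analytic existence in Proposition \ref{prop:surjective}: solving $\Lambda F=iC$ on the non-compact complement of the $p_i$ while controlling the asymptotics so that the solution has exactly the prescribed Dirac-type singularities. One must ensure the metric neither smooths out nor worsens the model behavior near each $p_i$, which requires weighted estimates and barrier arguments adapted to the lift to $B^4$, with the global bound supplied by $\vec{t}$-stability; matching the singular weights so that $\HH$ lands back in the correct stratum of type $\bbk$ is the delicate technical core.
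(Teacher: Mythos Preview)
Your proposal is correct and takes essentially the same approach as the paper, which likewise reduces the theorem to the well-definedness already established and to Propositions \ref{prop:surjective} (existence via Simpson's heat flow on $S^1\times Y$, with the Dirac behavior recovered through the Hopf lift to $B^4$) and \ref{prop:injective} (uniqueness). One small divergence: for injectivity the paper applies the integral identity with (\ref{eqn:identities}) directly to the intertwining section $\hat\tau\in\Gamma(E^*\otimes E')$ rather than to $\psi^*\psi$, obtaining $\int|\hat\phi\hat\tau|^2+|\hat\nabla_t\hat\tau|^2+|\hat\nabla_\Sigma\hat\tau|^2=0$ so that $\hat\tau$ itself is covariant constant and intertwines the Higgs fields---no maximum principle or appeal to irreducibility is needed at that step.
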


The proof of surjectivity and injectivity are tackled separately by
Propositions 
\ref{prop:surjective} and \ref{prop:injective} below.

\begin{proposition}\label{prop:surjective}
 Given a $\vec{t}$-stable pair $(\EE,\rho)$ on $\Sigma$ of type
$\bbk=\bigl((\vec{k}_1,z_1),\ldots,(\vec{k}_N,z_N)\bigr)$ and the singular
time data
$0<t_1\leq\cdots\leq t_N\leq T$. There is a singular HE-monopole on
$S^1\times \Sigma$ of with Dirac-type singularities of weight $\vec{k}_j$ at
$p_j=(t_j,z_j)$ for which $\HH(E,\nabla,\phi)=(\EE,\rho)$. 
\end{proposition}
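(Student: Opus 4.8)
The plan is to follow the standard two-step pattern of a Kobayashi--Hitchin correspondence: first build a complex-geometric object solving the integrability part of the equations and realizing $(\EE,\rho)$ under $\HH$, and then solve the remaining variational equation $\Lambda F = iC\,\id$ by finding a Hermitian metric, using $\vec{t}$-stability to force the analytic estimates through. The constant $C$ is not free: it is pinned down by Equation~(\ref{eqn:geometricC}), and the identification of the $\vec{t}$-degree with the average degree (Remark~\ref{independence}) is exactly what makes the topological constraint $\int \Lambda F \sim iC$ consistent with the prescribed data.

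First I would construct, over $(S^1\times\Sigma)\setminus\{p_1,\ldots,p_N\}$, a smooth complex bundle $E$ carrying a holomorphically integrable pair $(\nabla_0,\phi_0)$, that is one satisfying $[\nabla_x-i\nabla_y,\nabla_t-i\phi_0]=0$, with Dirac-type singularities of weight $\vec{k}_j$ at $p_j$ and with $\HH(E,\nabla_0,\phi_0)=(\EE,\rho)$. The data specifying such an object are the holomorphic slices $\EE_t$ and their scattering maps. On each interval $(t_j,t_{j+1})$ I set $\EE_t$ to be the fixed bundle obtained from $\EE$ by the Hecke modifications prescribed by $\vec{k}_1,\ldots,\vec{k}_j$ at $z_1,\ldots,z_j$, so that crossing $t_j$ performs the elementary modification of type $\vec{k}_j$ at $z_j$. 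The Iwahori factorization of $\rho$, with local form $F_j(z)\diag((z-z_j)^{k_{j1}},\ldots,(z-z_j)^{k_{jn}})G_j(z)$ at each $z_j$, guarantees that these modifications compose around the circle to exactly $\rho$, giving monodromy $\rho_0=\rho$. Near each $p_j$ I glue in the local model of Section~\ref{sec:Dirac}, namely the sum of Dirac monopoles $L_{k_{j1}}\oplus\cdots\oplus L_{k_{jn}}$ twisted by the holomorphic frames $F_j,G_j$; this installs the correct singular type and the prescribed jump, and by Proposition~\ref{prop:scattering} and Equation~(\ref{eqn:zk}) recovers the right scattering. Away from the singularities there is no local obstruction to a holomorphically integrable structure, so any smooth interpolation realizing the chosen slices and scattering completes the construction.

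With this complex object fixed, I would seek a complex gauge transformation $g$, equivalently a Hermitian metric $h$, carrying $(\nabla_0,\phi_0)$ to a pair satisfying $\Lambda F=iC\,\id$. Since complex gauge transformations preserve the holomorphic structure of the slices and the scattering, the resulting monopole still satisfies $\HH=(\EE,\rho)$. This is the Hermitian--Einstein existence problem, which I would attack by the Donaldson heat flow (or a continuity method), translating it, via the reduction of the HE--Bogomolny equation to a translation-invariant Hermitian--Einstein equation on $\cyl\times\Sigma$ recalled above, into a problem in four dimensions. The crucial input is the a priori $C^0$ bound on the endomorphism $g$: the only way this bound can fail is through the emergence of a destabilizing $\rho$-invariant subbundle, so $\vec{t}$-stability, with the $\vec{t}$-slope playing the role of the HE constant through~(\ref{eqn:geometricC}), rules out blow-up and forces convergence to a genuine solution. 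The irreducibility of the limit then follows from stability together with the converse direction already proved: a reducible HE-monopole would produce a proper $\rho$-invariant piece of equal slope, contradicting $\vec{t}$-stability of $(\EE,\rho)$.

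The main obstacle is the analysis at the singular points. I would control it using the Hopf lift of Proposition~\ref{prop:Pauly}: near each $p_j$ the Dirac-type singularity lifts to a smooth $S^1$-invariant ASD problem on $B^4$, so the flow and its estimates can be carried out in weighted spaces adapted to the $O(R^{-1})$ behaviour of the model, and one must verify that the flow preserves comparability with the model metric and hence does not degrade the singularity. At the end two things must be checked: that the limiting metric keeps the asymptotics $\phi\sim(i/2R)\diag(k_{j1},\ldots,k_{jn})$ with $\nabla(R\phi)=O(1)$, so the solution is a Dirac-type HE-monopole of weight $\vec{k}_j$ in the sense of Definition~\ref{def:Dirac-type}; and that the integration by parts underlying the energy estimate produces no boundary contribution at the $p_j$, exactly as in the converse direction. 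Establishing these singular estimates, and converting the global condition of $\vec{t}$-stability into the required sup-bound in their presence, is the technical heart of the argument.
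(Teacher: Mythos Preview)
Your outline is essentially the paper's own strategy: build the holomorphic bundle from $(\EE,\rho)$ as a clutching construction, glue in Dirac models to get an initial metric with the right singular behaviour, run a heat flow to the Hermitian--Einstein metric with stability guaranteeing convergence, and use the Hopf lift to control regularity at the singularities. So the architecture is right.

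Where the paper is sharper is in two technical choices that you leave unspecified. First, the paper does \emph{not} work on the infinite-volume cylinder $\cyl\times\Sigma$; it passes instead to $X=S^1\times\bigl((S^1\times\Sigma)\setminus\{p_1,\ldots,p_N\}\bigr)$, a finite-volume (though noncompact) K\"ahler surface on which Simpson's noncompact Hermitian--Einstein theorem applies. The three hypotheses of Simpson's theorem (finite volume, an exhaustion function with bounded Laplacian, and a mean-value/maximum-principle property) are then verified explicitly, and Simpson's analytic degree is identified with your $\vec{t}$-degree, so that $\vec{t}$-stability literally becomes the stability Simpson needs. ``Donaldson heat flow'' on its own does not give you this: you need a framework that tolerates the punctures, and Simpson's is the one that fits. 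Second, for the Dirac-type regularity at the end, the paper does something more concrete than working in weighted spaces: it lifts the HE--Bogomolny equation to $B^4$, checks that the resulting equation is a $C^1$ elliptic perturbation of the flat one, runs a \emph{local} heat flow on the four-ball with Dirichlet boundary data coming from the global solution, and then invokes Simpson's uniqueness lemma to conclude that the local (manifestly regular) and global solutions coincide. That uniqueness step is what actually pins down the asymptotics $\phi\sim(i/2R)\diag(k_{jl})$; your ``verify the flow preserves comparability with the model'' is the right instinct, but the mechanism the paper uses is this local-versus-global matching rather than a direct weighted estimate.
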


\begin{proof}
The steps we follow in showing this proposition are as follows:
\begin{itemize}
\item We use $\rho$ to extend $\EE$ to a bundle $E$ on
$Y=S^1\times \Sigma \setminus\{p_1,\ldots,p_N\}$, with the correct degrees on
the spheres around the points $p_j$, that is holomorphic on all the slices
$\Sigma_t$, and  lifts to a holomorphic bundle $\bar{E}$ on the
complex manifold $X=S^1\times Y$, subset of $\bar X=S^1\times S^1\times \Sigma$,
invariant under the action of $S^1$ on the first factor $S^1$ of $\bar X$.
\item We have on $\bar E$ a holomorphic structure; thus for any hermitian
metric on the bundle, there is a unique unitary connection (the Chern
connection) compatible with the holomorphic structure.  We choose a hermitian metric  on
$\bar{E}$ whose Chern connection
around the $j$th singularity is that of 
the sum of Dirac monopoles of weights $\vec{k}_j$. 
\item This metric serves as an initial metric for the heat flow  of
Simpson's paper
\cite{Simpson-Hodge-structures}.  We take the limit as time goes to infinity to
produce a Hermitian--Einstein connection on
$\bar{E}$, invariant under the action of $S^1$ on the first factor of $X$, and
so descending to $Y$. This process gives us the connection we want on $X$, and,
reducing to $Y$, our HE-monopole.
\item Simpson's theorem does not immediately give us the regularity we need at
the singular points. To see that the singularities are indeed of Dirac type; we
finish the proof by lifting locally on three-balls $B^3$ surrounding the
singularities using the Hopf map $B^4\rightarrow B^3$.
\end{itemize}

The first step is to extend $\EE$ to a bundle on $Y$.  We have supposed,
without 
loss of generality, that none of the $t_j$ is zero.  Consider the projection
map $\pi$ from $\tilde Y = \bigl((-T,T)\times \Sigma\bigr) \setminus \bigcup_j
\bigl((-T,t_j-T)\cup(t_j,T)\bigr)\times\{z_j\}$ to $\Sigma$; take the lift
$\pi^*\EE$ to $\tilde Y$. As $Y$ can be obtained from $\tilde Y$ by an
identification $(t, z)\rightarrow (t+T, z)$, 
we define a bundle $E$ on $Y$ by identifying $(t, z, v), t\in (-T,0), z\in
\Sigma,
v\in \EE|_z$ to  $(t +T, z, \rho(z)v)$.    Since the holomorphic
map $\rho$ decomposes, by hypothesis, into 
$h(z) \diag_l (z^{k_{jl}}) g(z)$, in coordinates $z$ centred at the
points $p_j$, the bundle one obtains is then indeed a sum 
$L_{k_{j1}}\oplus\cdots\oplus L_{k_{jn}}$ in a punctured neighborhood of $p_j$.
Lifting to $X$, as the clutching functions are holomorphic, the
result is a holomorphic bundle $\bar E$. Since it is holomorphic, it has a
$\overline\partial$ operator on it.

For the second step, we specify a hermitian metric $K$ on $\bar E$.
This metric specifies the Chern connection: one has
$\nabla= d+ ((\partial K )K^{-1})^T$ in
a holomorphic trivialisation. We   need a metric $K$ whose associated connection
is close to a solution of the Hermitian--Einstein equations, and has the
singular behaviour that we want:  we therefore choose Hermitian 
metrics corresponding to lifts of a sum of
Dirac monopoles in a neighbourhood of the singularities. As we also want our metric to be $S^1$-invariant on $X$, we
define it on $E$ over  $Y$.

We can choose in $\Sigma$ a small disk $D_0$, and disks $D_j$ surrounding the  
$z_j$, such that all these disks are mutually disjoint. We now choose over $\EE
= E_0$ a metric $k_0$:   
trivialise $E$ over the complement of $D_0$, and with respect to this
trivialisation,
choose the metric $k_0=1$ 
on this complement, then extend to a metric $k_0$ over $D_0$. The
curvature $F_0$ of the induced connection is then  concentrated on  $D_0$; its
trace represents the first Chern class of $\EE
= E_0$. Lift
the bundle $E_0$, and the metric  $k_0$, to $\tilde Y$. We note that as we vary
$t$, the curvature $F_0$ can no longer represent 
the Chern class of $E_t$ as $t$ moves through the various singular points, as
this Chern class changes. This problem is solved by glueing in Dirac monopoles
in balls around the singular points, in such a way that after moving through the
singular point $p_j$ some curvature is added into the disk $D_j$.

More explicitly, for $j=
1,\ldots,N$ let $C_j$ be disks properly included in $D_j$,  and let $\epsilon$
be such that $4\epsilon<\min(t_1, t_2-t_1,\ldots, t_N-t_{N-1}, T-t_N)$. We
cover $ Y$ by open sets 
\begin{align*}
U_0 &:= \bigl((-2\epsilon, t_N+2\epsilon)\times \Sigma\bigr)\setminus
\bigl(\cup_j (t_j-\epsilon, t_N+2\epsilon)\times C_j)\bigr),\\
U_{N+1} &:= (t_N+\epsilon, T-\epsilon)\times \Sigma,\\
U_{j-} &:= \bigl((t_j-2\epsilon, t_j+2\epsilon)\times D_j\bigr) \setminus
\bigl((t_j ,
t_j+2\epsilon)\times \{z_j\}\bigr),\quad j=1,\ldots,N,\\
U_{j+} &:=\bigl( (t_j-2\epsilon, t_N+2\epsilon))\times D_j\bigr)
\setminus 
\bigl((t_j
+2\epsilon, t_j )\times \{z_j\}\bigr),\quad j=1,\ldots,N.\end{align*}

In the trivialisation given above over the complement of $D_0$, factor $\rho$
near each point $z_j$  as $\rho = h_j(z) \diag_l (z^{k_{jl}})g_j(z)$
with $g_j, h_j$ invertible,
where the coordinate $z$ is centred at $z_j$ and
is chosen so that the metric osculates at $z_j$ the Euclidean  metric with orthonormal coordinates $t,\Re(z), \Im(z)$.
An equivalent construction of the bundle $E$ is given by specifying the
transition functions ($f_{\alpha,\beta}$ over $U_\alpha\cap U_\beta$):
\begin{gather*} f_{0, j-}=  g_j,\quad f_{j-, j+} = \diag_l
(z^{k_{jl}}),\quad f_{0, j+}= g_j \diag_l (z^{k_{jl}}),\quad f_{j+,N+1}
= h_j,\\
f_{0,N+1} = \rho \text{ over } (t_N +\epsilon, t_N +2\epsilon),\text{ and }
f_{0,N+1} = 1  \text{ over } (T-2\epsilon, T-\epsilon).\end{gather*}
Now note that the bundle and its transition functions are those for sums of 
Dirac monopoles on $U_{j-}$ and $U_{j+}$; we choose the hermitian metrics
$\diag_l((R-t)^{k_{jl}})$ on $U_{j-} $, and
$\diag_l((R+t)^{k_{jl}})$ on $U_{j+}$, which are compatible under the
change of basis. In parallel, we have the metric lifted from $\EE$ on $U_0,
U_{N+1}$. Choosing a partition of unity, we patch all these metrics together
over
$Y$, taking the non-trivial changes of trivialisations into account.

The metric $K$ we have obtained can be lifted 
to a metric $\bar K$ on $\bar E$.
  
\begin{lemma}\label{lemma:constructed}
 The  pair $(\bar E, \bar K)$ constructed above has the following properties
\begin{itemize}
\item $\bar E$ is invariant under the   action of 
$S^1$ on $X$; this action complexifies to an action of $\C^*$ over $S^1\times
S^1\times (\Sigma\setminus \{z_1,\ldots ,z_N\}) $, with the action of the real
element $T\in \R\subset \C^*$ corresponding to $\rho$
 \item $\bar K$ is invariant under the action of $S^1$ on $\bar E$,
 \item in the neighborhood of the inverse image of the singular point $p_j$,
the pair $(\bar E,\bar K)$
corresponds to  a sum of Euclidean Dirac monopole of weight
$\vec{k}_j$,
\item $(\bar E,\bar K)$ satisfies a bound $|\Lambda F_{\bar K}|\leq
c<\infty$.
\end{itemize}
\end{lemma}

The first three properties follow by construction.  For the fourth, we note that
$\Lambda F_{\bar K}^\perp$ would be  $0$ in a neighbourhood of the singular
circles if the metric on $X$ in this neighbourhood were Euclidean.  Since we took
coordinates osculating the Euclidean metric to second order, we still have the
bound. We note that we could have glued in the Dirac monopoles in the $S^1\times\Sigma$ metric produced in Section 
\ref{sec:HE-monopoles}
instead of the Euclidean ones, in which case $\Lambda F_{\bar K}^\perp$ would indeed be zero in a neighbourhood of the singularities; we have used the Euclidean ones for explicitness. 

We note that the curvature 
of the 
Chern 
connection is concentrated on $(S^1\times S^1\times D_0) \cup
(\cup_{j=1}^N (S^1\times (t_j-2\epsilon,T)\times D_j))$.

We take $K$ as the starting point for Simpson's heat flow
\begin{equation}
 \begin{aligned}
  H^{-1}\frac{d H}{du}&=-i\Lambda F_H^\perp,\\
H_0&=K.
 \end{aligned}
\end{equation} 
The asymptotic behaviour of this heat flow is governed by the following theorem.

\begin{theorem}[Simpson \citep{Thm 1, p.\
878, case
$\theta=0$}{Simpson-Hodge-structures}]\label{thm:Simpson}
 Let $(X ,\omega)$ satisfy certain conditions given below in Lemma
\ref{lemma:conditions}, and suppose $E$ is an $S^1$-invariant   
bundle on $X$ with $S^1$-invariant metric $K$ satisfying the assumption that
$\sup |\Lambda F_K| < c$. Suppose 
$E$ is stable, in the sense that it arises from a stable pair on $\Sigma$. Then
there is a $S^1$-invariant metric 
$H$ with $\det(H) = \det(K)$, $H$ and $K$ mutually bounded, $\bar\partial
(K^{-1}H)\in L^2$, and such that $\Lambda F_H^\perp = 0$. 
In addition, if $R$ is the distance to one of the singularities, $R\cdot
d(K^{-1}H)$ is bounded by a constant.
\end{theorem}
 
 (The last sentence does not appear as part of Simpson's statement of the
 theorem, but is given in a remark after his Lemma 6.4)

Since $H_\infty$ is
$S^1$-invariant, we can quotient out and consider it on $E$ (over $Y$).  The
equation $\Lambda F_{H_\infty}^\perp=0$ then becomes the HE-Bogomolny equation
(\ref{eqn:HEBogomolny}).  The metric $H_\infty$ is of course obtained as the
limit $\lim_{u\to \infty} H_u$ of the heat flow.

Simpson uses a notion of stability slightly different from ours.  His degree is
\[\deg(E,K)=i\int_X \tr(\Lambda F_K).\]

\begin{lemma}\label{lemma:degree}
The two notions of degree coincide: $\deg(E,K)=T\delta_{\vec{t}}(\EE,\rho)$. More
explicitly,
\[i\int_X \tr(\Lambda F_K) =  Tc_1(E_0) - \sum_{j=1}^{N}
\trk{\vec{k}_j}t_{j},\] 

Furthermore, there a correspondence between holomorphic subpairs $(\VV,
\rho|_\VV)$ of $(\EE, \rho)$ and $S^1$-invariant holomorphic subbundles $\bar V$
of $\bar E$, and for these also the notions of degree coincide.
\end{lemma}

\emph{Proof of Lemma \ref{lemma:degree}.} Indeed, for $\hat E$, the quantity 
$\tr(\Lambda F_K) $ is equal to the lift of $\tr( F_\Sigma-\nabla_t\phi)$.
Integrating $\tr(F_\Sigma)$ gives, as above, $Tc_1(E_0) - \sum_{j=1}^{N}
\trk{\vec{k}_j}t_{j}$; integrating  $\nabla_t\phi$, starting with the
$t$-direction,  gives $0$.

We note that from the definition of $\hat E$, $\rho$-invariant subbundles $\VV$
of $\EE$ naturally give $S^1$-invariant subbundles $\hat V$ of $\hat E$; on the
other hand, if a subbundle $\hat V$ of $\hat E$ is $S^1$-invariant, it is also
$\C^*$-invariant, and one can define a pair $(\VV, \rho)$ by setting $\VV =
\hat V|_{\{(0,0)\}\times \Sigma}$ and using the time $T$ action of $\C^*$ to
define $\rho$.
The proof of the equivalence of degrees for $\VV$ and $\hat V$ goes through as for
the bundle as a whole.
\qed{(Lemma \ref{lemma:degree})}

\bigskip

\begin{lemma}\label{lemma:conditions}
 The manifold
$X=S^1\times\bigl((S^1\times\Sigma)\setminus\{p_1,\ldots,p_N\}\bigr)$ satisfy
the three conditions necessary to Simpson's Theorem \ref{thm:Simpson}:
\begin{enumerate}
 \item $X$ is a K\"ahler manifold of finite volume;
\item there is on $X$ a non-negative exhaustion function whose Laplacian is
bounded;
\item 
there is an increasing function $a\colon [0,\infty)\to[0,\infty)$ with
$a(0)=0$ and $a(x)=x$ when $x>1$, such that if $f$ is a bounded positive
function on $X$ with $\Delta(f)\leq B$ then 
\[\sup_X|f|\leq C(B) a\bigl(\int_X|f|\bigr),\]
and furthermore, if $\Delta(f)\leq 0 $ then $\Delta(f)=0$.

\end{enumerate}
\end{lemma}

\emph{Proof of Lemma \ref{lemma:conditions}.}
The first condition is obviously satisfied by the construction of $X$.

To construct the  non-negative exhaustion function on
$X$ subject of the second condition, we first build a function $f$
on $Y$ whose Laplacian is bounded. 
The wanted exhaustion function is the pull back of $f$ to $X$ via the
projection on $Y$.  If the Laplacian of $f$ on $Y$ is bounded,
then  the Laplacian of the corresponding pull-backed $f$ on $X$ is also bounded.

Let $R_j$ be the geodesic distance in $S^1\times\Sigma$ to the singularity
$p_j$. In the Euclidean case, there is an obvious candidate for $f$: let $f$ be
$1/R_j$ close to $p_j$ and extend it smoothly to the
rest of $Y$.  Since $1/R$ is
harmonic in $\R^3$, the Laplacian of $f$ on $Y$ is obviously bounded.
For other Riemann surfaces, $\Delta(1/R)$ has a term
behaving like $1/R$ so we have to be careful
and find a bounded function whose Laplacian kills that extra $1/R$
factor.  In \cite[Prop 3.2.2]{paulythesis}, Pauly proves that there is
a harmonic function $f_j$ on a neighborhood of the singularity $p_j$ such that
$f_j=1/R_j+O(1)$.  This function is exactly what we are looking for, and
extending all the $f_j$ to $Y$ we find a function $f$ whose Laplacian is
bounded.
The obtained function $f$ is exactly the type of exhaustion function we are
looking for.

Simpson proves in \cite[Prop 2.2]{Simpson-Hodge-structures} that the third
condition is fulfilled for the smaller space $X^o=(T^2\times \Sigma)\setminus
\bigcup_j T^2\times\{z_j\}$.  Since $X^o$ is dense in $X$, the condition is also
fulfilled for $X$.
\qed(Lemma \ref{lemma:conditions})

\bigskip

In the case where $K$ is the metric we have carefully constructed above, the
limiting $H=H_\infty$ this theorem gives us yields a solution on $Y$ to the
HE-monopole equations.
We need to understand why the corresponding HE-monopole has the desired Dirac
monopole behavior at the poles. For this, as above, we can use the local construction
exploited both  by Kronheimer \cite{kronheimerMSc} and Pauly \cite{pauly} and
explained on page \pageref{Hopfmap}:
one considers the quadratic map $\pi\colon B^4 \rightarrow  B^3 $ given by 
\begin{equation}\pi(w_1,w_2)=( t =  |w_1|^2 - |w_2|^2, z= x+iy= 2w_1 w_2,)\end{equation}
This defines a lift of forms $\pi^*$; as noted above in Section 2.3, if we add an extra variable $s$ to $B^3$, expanding to $S^1\times B^3$, we can write the HE--Bogomolny equations for $\nabla,\phi$ as the HE equations for $\tilde\nabla = \nabla+\phi ds$ (recall that our complex coordinates are $t-is, z$); if we make the formal definition $\pi^*ds = \xi$, where $\xi$ as above is the $S^1$ invariant form
$$\xi = \frac{1}{i}(w_1d\overline w_1 -\overline w_1dw_1 -w_2d\overline w_2 +\overline w_2dw_2)$$ then the process used by Kronheimer and Pauly to smooth out the Dirac singularities associates to $(\nabla = d+A, \phi)$ the ``lift'' on $B^4$: $\widehat \nabla = \pi^*\tilde\nabla =  \pi^*\nabla+\pi^*\phi \xi$; the curvatures are related by 
\[F_{\widehat\nabla} = \pi^*F_{\widetilde\nabla} + \pi^*\phi d\xi.\] 
Now we have for an HE-monopole on $B^3$ an equation given by asking that, after lifting to $S^1\times B^3$, the projection of the curvature onto the space of self-dual two-forms, with kernel the space of anti-self-dual two-forms, take on a specified value. We can ask what this projection corresponds to on $B^4$. Unlike Pauly, we do not modify our form $\xi$ and the metric on $B^4$; we keep the standard Euclidean form and metric, and simply consider how the equation varies.

Keeping the formal lift $\pi^*(ds) =\xi$, we have the lifts (dropping the $\pi^*$):
\begin{align*}dz &= 2(w_1dw_2 + w_2dw_1), &dt-ids & = 2\overline w_1dw_1 -2 \overline w_2 dw_2,\\
d\overline z & = 2(\overline w_1d\overline w_2 + \overline w_2d\overline w_1), &dt+ids & = 2w_1d\overline w_1 -2  w_2 d\overline w_2.
\end{align*}
Therefore the bitypes above and below correspond; in particular the spaces of $(2,0),(0,2)$ and $(1,1)$ forms upstairs and downstairs correspond.

Let us now look at the $(1,1)$ forms under this formal pullback.
We have the lift of the K\"ahler form
\begin{align*}
\pi^*(\Omega)= \frac{i\alpha}{2} dz\wedge d\overline z -dt\wedge\xi 
  =2i\Bigl[& (\alpha |w_2|^2 + |w_1|^2)d w_1\wedge d\overline w_1 
            +(\alpha |w_1|^2 + |w_2|^2)d w_2\wedge d\overline w_2 \\
           &+(\alpha-1)\bigl(w_2\overline w_1d w_1\wedge d\overline w_2 
            +w_1\overline w_2 d w_2\wedge d\overline w_1\bigr)  \Bigr]
\end{align*}
and the lift of the three anti-self-dual forms
\begin{gather*}
\begin{aligned}
\bar\epsilon_1:= \frac14\Bigl(dz\wedge d\overline z -\alpha (dt-i\xi)\wedge (dt+i\xi)\Bigr) 
= & ( |w_2|^2-|w_1|^2)(d w_1\wedge d\overline w_1-d w_2\wedge d\overline w_2) \\
&+(1-\alpha) (|w_1|^2 d w_1\wedge d\overline w_1 + |w_2|^2 d w_2\wedge d\overline w_2 )\\
&+(1+\alpha) \bigr(w_2\overline w_1d w_1\wedge d\overline w_2 +w_1\overline w_2 d w_2\wedge d\overline w_1\bigr),
\end{aligned}\\
\begin{aligned}
 \bar\epsilon_2:=\frac14\Bigl(dz\wedge  (dt+i\xi)\Bigr) 
= &\phantom{-}w_1  w_2(d w_1\wedge d\overline w_1-d w_2\wedge d\overline w_2) 
 -w_2^2 d w_1\wedge d\overline w_2+ w_1^2 d w_2\wedge d\overline w_1, \\
\bar\epsilon_3:=\frac14\Bigl(
 d\overline z\wedge  (dt-i\xi) \Bigr)
= & -\overline w_1  \overline w_2(d w_1\wedge d\overline w_1-d w_2\wedge d\overline w_2) 
 -\overline w_1^2  d w_1\wedge d\overline w_2 +  \overline w_2^2 d w_2\wedge d\overline w_1.
\end{aligned}
\end{gather*}

Dividing by $4(|w_1|^2+|w_2|^2)$ 
the  lift of K\"ahler form, we have
\begin{align*}\tilde\Omega:=&\frac i2\bigl(d w_1\wedge d\overline w_1 + d w_2\wedge d\overline w_2\bigr)\\
&+\frac{ i(\alpha-1)}{2(|w_1|^2+|w_2|^2)} \bigl(|w_1|^2d w_2\wedge d\overline w_2  + |w_2|^2d w_1\wedge d\overline w_1  +w_2\overline w_1d w_1\wedge d\overline w_2 + w_1\overline w_2d w_2\wedge d\overline w_1\bigr).
\end{align*} 
Let $\omega$ be the standard K\"ahler  form  for the Euclidean metric upstairs, let $R^2=|w_1|^2+|w_2|^2$ and let $Q$ be the quadratic expression in $w_i,\bar{w}_i$ such that $\tilde{\Omega}=\omega+(\alpha-1)\frac{Q}{R^2}$.

Interestingly,  the basis $\{\tilde\Omega,\bar\epsilon_1,\bar\epsilon_2,\bar\epsilon_3\}$ of $\bigwedge^{1,1}$ is orthogonal for the usual Euclidean inner product on $B^4$, and for that inner product $|\tilde\Omega|^2=(\alpha^2+1)$.  Therefore the projection operator on the linear subspace spanned by $\tilde\Omega$ can be written 
\[P_{\tilde\Omega}(F)=\frac{\scp{F,\tilde\Omega}}{\scp{\tilde\Omega,\tilde\Omega}}\tilde\Omega.\]
One can check that in fact, 
\[P_{\tilde\Omega}(F)=\frac{\scp{F,\omega}}2\omega+\frac{(\alpha-1)}{(\alpha^2+1)}\left(\frac{\scp{F,Q}\omega+\scp{F,\omega}Q}{R^2}+(\alpha-1)\frac{\scp{F,Q}}{R^4}Q-\frac{(\alpha+1)}{2}\scp{F,\omega}\omega\right).\]

For $k\in \N$, let $p_k$ symbolically represents any homogeneous polynomial of degree $k$ in the $w_i,\bar{w}_i$ and let $p_\infty$ be any smooth function.  Therefore $p_k+p_k=p_k$ and $p_kp_l=p_{k+l}$.  Using this formalism, when $\alpha$ is normalised as in Equation  (\ref{alpha}), we have $\alpha=1+p_4p_\infty$.

For $k$ finite, notice that, if $\partial$ represents any derivative with respect to $w_i,\bar{w}_i$, we have
$\partial\bigl(\frac{p_k}{R^j}\bigr)=\frac{p_{k+1}}{R^{j+2}}$.
Since $\frac{p_k}{R^j}\in C^0$ if $k-j>0$, we have $\frac{p_k}{R^j}\in C^{k-j-1}$. As
\[P_{\tilde\Omega}(F)=p_\infty+p_\infty\frac{p_6}{R^2}+p_\infty\frac{p_{12}}{R^4},\]
 the coefficients of the projectors are in $C^3$.



Now the equation for HE connections below is 
\[P_\Omega(F_{\widetilde \nabla})= C\id_E\Omega\]
Lifting, the equation becomes
\[P_{\tilde\Omega}(F_{\widehat\nabla} - \pi^*\phi d\xi) = \pi^*(C\id_E\Omega).\]
We note that if $\alpha$ is uniformly one (the Euclidean case), $\Lambda( d\xi)=P_\omega(d\xi)= 0$; for $\alpha$ of the form $1+ w\overline w f(w,\overline w)$, $\Lambda( d\xi)=P_{\tilde\Omega}(d\xi)$ is of the form (bounded)(quartic) near the origin. 

The hermitian connection $\widetilde\nabla$ of interest to us is obtained from an initial hermitian connection
$\widetilde\nabla_0$, the Chern connection for a metric $H_0=K$, by keeping the same $(0,1)$ part and modifying the $(1,0)$ part so that $\widetilde\nabla$ is the Chern connection for a modified metric $H_\infty=H_0h$: this gives for the connection matrices (Simpson, lemma 3.1):
\[A^{0,1} = A_0^{0,1}, A^{1,0} = A_0^{1,0} + h^{-1}\nabla_0^{1,0}  h\]
In particular, for the Higgs field
\[\phi = \phi_0 +\frac{i}{2} h^{-1}\nabla_{0,t}h\]
For the  (1,1) component of the curvature, one has
\[ F_{\widetilde\nabla}^{1,1} = F_{\widetilde\nabla_0}^{1,1} + \nabla_0^{0,1}(h^{-1}\nabla_0^{1,0}  h)\]
Now lift this to $B^4$: one has the equation
\[ F_{\widehat \nabla} - \pi^*\phi d\xi = F_{\widehat\nabla_0} - \pi^*\phi_0 d\xi + 
\widehat\nabla_0^{0,1}(\pi^*h^{-1}\widehat\nabla_0^{1,0} \pi^* h)+ \pi^*h^{-1}[\pi^*\phi \overline \partial \xi, \pi^*h]\]

Since the original upstairs connection, a sum of the flat connections corresponding to Dirac monopoles,  has zero curvature in the neighbourhood of the origin, we have in this neighbourhood
\[ F_{\widehat \nabla} - \pi^*\phi d\xi = - \pi^*\phi_0 d\xi + 
\widehat\nabla_0^{0,1}(\pi^*h^{-1}\widehat\nabla_0^{1,0} \pi^* h)+ \pi^*h^{-1}[\pi^*\phi \overline \partial \xi, \pi^*h]\]
 We have the equation
\[\Lambda(- \pi^*\phi_0 d\xi + 
\widehat\nabla_0^{0,1}(\pi^*h^{-1}\widehat\nabla_0^{1,0} \pi^* h)+ \pi^*h^{-1}[\pi^*\phi_0 \overline \partial \xi, \pi^*h]) = \pi^*(C\id_E\Omega).\]
When $\alpha$ is uniformly one, the forms $d\xi , \overline \partial \xi$ are anti-self-dual, and the equations reduce to 
\begin{equation}\hat D(h) \equiv\Delta h -2i\Lambda \widehat\nabla_0^{0,1} h h^{-1} \widehat\nabla_0^{1,0}h=0.\end{equation}

When $\alpha$ is not uniformly one, we have an elliptic equation $\hat D(h) = 0$, a deformation of the one above, whose coefficients are $C^1$ (taking into account the poles of $\phi_0$ and the behaviour of $\Lambda(- \pi^*\phi_0 d\xi)$ ).

Now let us recall that $h$ is obtained as $h_\infty$ from a heat flow $h_u$ downstairs; $h$ is smooth away from the singularities. Upstairs, $h_u$ solves the heat equation $\partial_uh_u =\hat D(h_u) $.  Now take upstairs a four-ball around the singular point, mapping to a three-ball downstairs, and take as initial conditions for the heat flow  $\partial_u\hat h_u =\hat D(\hat h_u)$ the value $\hat h_0=h_0= 1$, and as boundary condition the Dirichlet condition $\hat h = h$. Applying the work of Donaldson \cite{Donaldson-boundary}, or simply again the results of Simpson, one obtains a $C^1$ solution $\hat h_u$, which is $S^1$-invariant as the initial and boundary conditions are so, and which satisfies the same boundary conditions and initial conditions as $h(t)$. One again has a limit $\hat h =\hat h_\infty$, solution to $\hat D(\hat h) = 0$

Both $h$ and $\hat h$ descend to the three-ball, and solve the HE--Bogomolny equations there. One then can refer to the lemma in Simpson \cite[p.~893]{Simpson-Hodge-structures}, that tells us that one has uniqueness if solutions are bounded, which they are. Thus $\hat h = h$, telling us that the global  $h$ given by Simpson's result has the required smoothness at the singular points (as $\hat h$ does) to ensure that the Higgs field and its covariant derivative have the correct Dirac type singularities.  
\qed\end{proof}

We now have a HE-monopole corresponding to our initial data; we now must check
that it is unique.

\begin{proposition}\label{prop:injective}
Given two monopoles $(E,\nabla,\phi)$ and $(E',\nabla',\phi')$ yielding
isomorphic holomorphic data.  Then the two monopoles are isomorphic.  Hence the
map $\HH$ given by Equation (\ref{eqn:themap}) is injective.
\end{proposition}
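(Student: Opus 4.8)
The plan is to reduce the question of isomorphism of monopoles to uniqueness in the Kobayashi--Hitchin correspondence upstairs. Suppose $(E,\nabla,\phi)$ and $(E',\nabla',\phi')$ are two HE-monopoles on $S^1\times\Sigma$ with $\HH(E,\nabla,\phi)\cong\HH(E',\nabla',\phi')$; that is, there is a holomorphic isomorphism $\Phi\colon(\EE_0,\rho_0)\to(\EE'_0,\rho'_0)$ of bundle pairs. First I would lift both monopoles, as in Proposition \ref{prop:surjective}, to $S^1$-invariant Hermitian--Einstein bundles $(\bar E,\bar H)$ and $(\bar E',\bar H')$ on the complex manifold $X=S^1\times Y$, equipped with their Chern connections satisfying $\Lambda F^\perp=0$. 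The isomorphism $\Phi$ of pairs on $\Sigma_0$ determines, via the $\C^*$-action (equivalently, via the scattering/monodromy data that reconstructs the bundle on all of $Y$), a holomorphic isomorphism $\bar\Phi\colon\bar E\to\bar E'$ of the underlying holomorphic bundles that is $S^1$-invariant and intertwines the $\rho$'s. What remains is to upgrade this holomorphic isomorphism to a \emph{unitary} isomorphism carrying $(\nabla,\phi)$ to $(\nabla',\phi')$.

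The key step is uniqueness of the Hermitian--Einstein metric within a fixed holomorphic structure. Pulling back $\bar H'$ by $\bar\Phi$ gives a second Hermitian--Einstein metric on the \emph{same} holomorphic bundle $\bar E$, with the same singular (Dirac) behaviour at the $p_j$ and the same central constant $C$ (which by Equation (\ref{eqn:geometricC}) is fixed by the combinatorial data $\bbk,\vec{t}$, hence is common to both monopoles). Simpson's uniqueness lemma (\cite[p.~893]{Simpson-Hodge-structures}), already invoked at the end of the proof of Proposition \ref{prop:surjective}, asserts that a Hermitian--Einstein metric on a stable bundle on $X$ is unique up to a positive scalar, provided the competing metrics are mutually bounded. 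Here the boundedness and the $L^2$-control of $\bar\partial(H^{-1}H')$ are guaranteed precisely because both metrics were produced by the heat flow with the same prescribed Dirac asymptotics, so the difference of metrics is controlled at the punctures. Since both determinants agree (the $\U(1)$-data, i.e.\ $\det\bar E$ with its induced monopole, is rigid once the singularities are fixed, as discussed in the Abelian remarks of Section 3.2), the scalar is $1$ and the two metrics coincide.

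Once $\bar H=\bar\Phi^*\bar H'$, the map $\bar\Phi$ is an isometry, hence carries the Chern connection of $\bar H$ to that of $\bar H'$; by $S^1$-invariance it descends to a unitary gauge transformation on $Y$ intertwining $\nabla$ with $\nabla'$. Because the scattering equation $(\nabla_t-i\phi)s=0$ together with $\nabla^{0,1}_\Sigma s=0$ is exactly the holomorphicity condition that $\bar\Phi$ respects, the descended isometry also carries $\phi$ to $\phi'$, giving the desired isomorphism of monopoles.

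I expect the main obstacle to be the analytic control at the singularities: verifying that the two Hermitian--Einstein metrics are mutually bounded with $\bar\partial(\bar H^{-1}\bar H')\in L^2$ near the $p_j$, so that Simpson's uniqueness lemma genuinely applies. This is where one must use the local Hopf-lift picture of Section \ref{sec:HE-monopoles} and the $R\cdot d(K^{-1}H)$ bound from Theorem \ref{thm:Simpson} to confirm that both metrics lie in the same ``gauge class'' at the punctures; the interior uniqueness is then formal.
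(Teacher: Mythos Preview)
Your proposal would work, but it takes a genuinely different route from the paper. The paper argues directly on $S^1\times\Sigma$: the holomorphic isomorphism is viewed as a section $\hat\tau$ of the tensor-product monopole $E^*\otimes E'$, which by construction lies in the kernel of both $\hat\nabla_\Sigma^{0,1}$ and $\hat\nabla_t-i\hat\phi$; the Weitzenb\"ock identities~(\ref{eqn:identities}) and an integration by parts then force $\hat\tau$ to be covariant constant and to intertwine the Higgs fields. There is no lift to $X$, no appeal to Simpson's uniqueness lemma, and no heat flow---just a short Bochner computation exactly parallel to the one used in the proof of stability.

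Your approach instead packages the uniqueness into Simpson's lemma on the four-manifold, which is conceptually clean and valid. One slip to correct: you assert that both metrics ``were produced by the heat flow,'' but this is not given---the two monopoles are arbitrary HE-monopoles with Dirac singularities, not \emph{a priori} outputs of Proposition~\ref{prop:surjective}. What you actually need (and correctly flag as the main obstacle) is that the Dirac-type asymptotics of Definition~\ref{def:Dirac-type} by themselves force the lifted metrics to be mutually bounded near the punctures; this follows from the Hopf-lift regularity of Proposition~\ref{prop:Pauly}. Both approaches must ultimately confront the singularities, but the paper's direct integral keeps the analysis three-dimensional and avoids invoking the heavier Simpson machinery a second time.
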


\begin{proof} Let $\HH(E,\nabla,\phi)=(\EE,\rho)$ and
$\HH(E',\nabla',\phi')=(\EE',\rho')$.  Since the holomorphic data are
isomorphic, the associated holomorphic bundles 
$\EE,\EE'$ are isomorphic by a holomorphic map $\tau$, 
in a way that intertwines $\rho$ and $\rho'$.  The same holds more generally 
for  $\EE_t,\EE_t'$. The map $\tau$ thus also aligns the 
corresponding eigenspaces of $\rho, \rho'$. One then has an isomorphism 
$\hat\tau$ from $E$ to $E'$ over $S^1\times\Sigma $.
One can combine the two monopoles and get a monopole 
$(E^*\otimes E',\hat\nabla =-\nabla\otimes \id 
+\id \otimes \nabla',\hat \phi= -\phi\otimes \id +\id \otimes \phi')$.

Consider $\hat\tau$ as a section of $E^*\otimes E'$.
We already know that $\hat\tau$ is in the kernel of  
$\hat \nabla_\Sigma^{0,1}$ and $\hat\nabla_t - i\hat\phi$. 
Using the identities (\ref{eqn:identities}), we find that
\begin{align*}
0 &=-\int_{S^1\times\Sigma}
\scp{\hat \tau,(\hat\nabla_t+i\hat\phi)(\hat\nabla_t-i\hat\phi)\hat \tau+
\omega^{-1}\nabla_\Sigma^{1,0}\nabla_\Sigma^{0,1}\hat \tau}
\dvol\\ 
&=\int_{S^1\times\Sigma}
\scp{\hat \tau,(-\hat\phi^2-\hat\nabla_t^2 - \hat\Delta_\Sigma)\hat \tau}\dvol\\
&=\int_{S^1\times\Sigma}
\scp{\hat\phi\tau,\hat\phi\tau}+ \scp{\hat\nabla_t\tau,\hat\nabla_t\tau}
+\scp{\hat\nabla_\Sigma\hat \tau,\hat\nabla_\Sigma\hat \tau} \dvol.
\end{align*}
Hence $\hat\tau$ is covariant constant.  As a map $E\to E'$, it intertwines the
two Higgs fields.  Hence the two monopoles are isomorphic.
\qed\end{proof}

\section{Moduli}
\subsection{HE-Monopoles on a three-torus}
 It turns out that the moduli space of
stable pairs has already been extensively studied for curves $\Sigma$ of genus
one in the context of integrable systems; see in particular \cite{HuMa2,HuMa1}.

\begin{definition} The pair $(\EE,\rho)$ is \emph{simple} if any section
 of $\End(\EE)$ commuting with $\rho$ is a multiple of the identity.
\end{definition}

\begin{proposition}  If $(E,\rho)$
is $\vec{t}$-stable, then it is simple.
\end{proposition}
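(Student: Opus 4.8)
The plan is to show that $\vec{t}$-stability forces simplicity by a standard eigenvalue-decomposition argument adapted to the meromorphic setting. Suppose $\sigma$ is a holomorphic section of $\End(\EE)$ commuting with $\rho$; I must show $\sigma$ is a scalar. The idea is to exploit the fact that commuting with $\rho$ forces $\sigma$ to preserve the eigenspace filtration of $\rho$, and that stability tightly constrains such invariant subbundles.

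First I would work generically on $\Sigma$, away from the singular points $z_i$ and from any locus where $\rho$ has repeated eigenvalues or $\sigma$ degenerates. On this open set $\rho$ and $\sigma$ commute as honest (invertible, in the case of $\rho$) bundle endomorphisms, so they can be simultaneously considered; the eigenvalues of $\sigma$ are then locally holomorphic functions. Since $\Sigma$ is compact, any global holomorphic function is constant, so I would argue that the eigenvalues of $\sigma$, viewed as multivalued holomorphic functions on the complement of a branch locus, are in fact genuine constants: the characteristic polynomial of $\sigma$ has coefficients that are global holomorphic functions on $\Sigma$, hence constants, so the eigenvalues $\lambda_1,\ldots,\lambda_m$ of $\sigma$ are constant complex numbers.

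Next, fix one eigenvalue $\lambda$ of $\sigma$ and consider the generalized eigenspace, or more precisely the subsheaf $\VV = \ker(\sigma - \lambda\,\id)$ extended to a saturated subsheaf (hence a subbundle, since $\Sigma$ is a curve) of $\EE$. Because $\sigma$ commutes with $\rho$, this $\VV$ is $\rho$-invariant: indeed $\rho$ maps each $\sigma$-eigenspace to itself wherever $\rho$ is an isomorphism, and this extends across the singular points by continuity of the saturation. If $\sigma$ is not a scalar, then there is at least one eigenvalue $\lambda$ for which $\VV$ is a \emph{proper} nontrivial $\rho$-invariant subbundle. The key step is then that $\vec{t}$-stability applies to $\VV$, giving $\mu_{\vec{t}}(\VV,\rho) < \mu_{\vec{t}}(\EE,\rho)$. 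But the complementary eigenspaces also give $\rho$-invariant subbundles summing to all of $\EE$, and a slope-additivity bookkeeping argument on $\vec{t}$-degree shows the slopes cannot all be strictly below the total slope while the pieces reassemble $\EE$. This contradiction forces $\sigma$ to have a single eigenvalue $\lambda$, i.e.\ $\sigma - \lambda\,\id$ is nilpotent; running the same subbundle argument on the image or kernel of the nilpotent part $\sigma - \lambda\,\id$ then forces that part to vanish, so $\sigma = \lambda\,\id$.

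The main obstacle I anticipate is the interaction with the singularities of $\rho$ at the points $z_i$: the commutation $\sigma\rho = \rho\sigma$ holds only where $\rho$ is regular, and one must check that the eigensubbundle $\VV$ constructed generically extends correctly across $z_i$ as a genuine $\rho$-invariant subbundle in the sense of the stability definition (so that the inequality $\mu_{\vec{t}}(\VV,\rho) < \mu_{\vec{t}}(\EE,\rho)$ is legitimately applicable). This requires showing that the saturation of $\VV$ is preserved by the local Iwahori form $F(z)\diag_j(z^{k_{ij}})G(z)$ of $\rho$ — equivalently that the constancy of $\sigma$'s eigenvalues propagates into the singular fibers. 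Once that compatibility is secured, the slope comparison is a routine application of the stability hypothesis together with the additivity of $\vec{t}$-degree over a direct-sum-type decomposition into $\rho$-invariant pieces.
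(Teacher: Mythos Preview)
Your approach is correct and is essentially the paper's: produce $\rho$-invariant (generalised) eigenbundles of a non-scalar $\sigma$, then use additivity of the $\vec{t}$-degree to contradict stability. One remark on emphasis: the obstacle you flag---extending the eigenbundle across the $z_i$ and checking $\rho$-invariance there---is not really the issue, since $\VV$ is a holomorphic subbundle defined on all of $\Sigma$ and $\rho$-invariance on a dense open set suffices; the step that actually carries the weight, and that the paper makes explicit, is the additivity of $\vec{t}$-degree over a $\rho$-invariant sub/quotient pair, which follows from $c_1(\EE)=c_1(\FF)+c_1(\QQ)$ together with $\det(\rho)=\det(\rho_\FF)\det(\rho_\QQ)$ forcing $\trk{\vec{k}_j}(\EE)=\trk{\vec{k}_j}(\FF)+\trk{\vec{k}_j}(\QQ)$.
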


The proof is the usual one: if it is not simple, then  the eigenbundles and
generalised eigenbundles  of a section $\sigma$ that commutes with $\rho$ but is not a multiple of the identity occur as both quotients and
subbundles of $\EE$, and are $\rho$-invariant. One has for any $\rho$-invariant subbundle $\FF$ and quotient $\QQ= \EE/\FF$, that $c_1(\EE) = c_1(\FF)+ c_1(\QQ)$. On the other hand, if $\rho_\FF$, $\rho_\QQ$ denote the endomorphisms on $\FF$, $\QQ$ induced by $\rho$,
the fact that $\det(\rho) = \det(\rho_\FF) \det(\rho_\QQ) $ tells us that $\trk{\vec{k}_j}(\EE) = \trk{\vec{k}_j}(\FF) + \trk{\vec{k}_j}(\QQ)$. One than has $\delta_{\vec{t}}(\EE,\rho)= \delta_{\vec{t}}(\FF,\rho_\FF) + \delta_{\vec{t}}(\QQ,\rho_\QQ)$, so that the existence of a non-trivial eigenbundle or
generalised eigenbundle  indeed contradicts stability.

Remark that for a stable pair with non-zero $\rho$ to exist, the
divisor $D= \sum_{j=1}^{N} \trk{\vec{k}_j}z_j$, of
degree zero by Equation (\ref{eqn:condition0}), must be a principal divisor
since it is the divisor of the determinant of $\rho$.

The paper \cite{HuMa2} describes the moduli of simple pairs $(\EE,\rho)$ for
arbitrary complex reductive groups. We adapt the result here for
$\Gl(n,\C)$. We begin by noting that there is a spectral curve $S^0=
S^0_{(\EE,\rho)}$ in $\Sigma\times \C$ defined by 
\begin{equation}
\det (\rho(z)-\lambda\id)=0.
\end{equation}
The curve $S^0$ extends to a closed curve $S$  in $\Sigma\times \bbp^1$; its
intersection with $\Sigma\times \{0,\infty\}$ occurs over the divisor
$\sum_jz_j$. Let $Char(\bbk)$ denote the family of curves obtained from our
simple pairs (recall the definition of $\bbk$ from page \pageref{def:bundlepair}); we do not describe it here with any thoroughness, 
referring instead to \cite{HuMa2}, except to note  
the fact that the orders $k_{jj'}$ of $\rho$ at the points $z_j$ constrain the
intersection of the spectral curve at $z_j$ with $\Sigma\times \{0,\infty\}$. 
Let $D_+=\sum_{k_{jj'}>0}k_{jj'}z_j$. 
The family $Char(\bbk)$ is the family of curves in the linear system
$|\pi_1^*({\OO}(D_+))\otimes \pi_2^*({\OO}(n))|$ satisfying these constraints.
 
For $\vec{k}=(k_1,\ldots,k_n)$ with $k_1\geq\cdots\geq k_n$,
set 
\begin{equation}\label{def:contribution}
[\vec{k}] = \sum_{a<b}k_{a}-k_{b} =
\sum_{a=1}^{n-1}(k_{a}-k_{a+1})a(n-a).
\end{equation}

\begin{theorem}[Main result of \cite{HuMa2}, adapted for $\Gl(n,\C)$]
\label{thm:elliptic}
Let $D$ be a principal divisor, and suppose that $\bbk\neq 0$. 
The moduli space ${\MM}_s(\Sigma, \bbk, k_0)$
of simple pairs $(\EE,\rho)$ of type $\bbk$, with $\EE$ of degree $k_0$ is
smooth, of complex dimension 
$ 2+\sum_{i=1}^N [\vec{k}_i] $. It has
a holomorphic symplectic structure, and the map 
\begin{equation}
{\MM}_s(\Sigma,\bbk,k_0)\to Char(\bbk)
\end{equation}
is Lagrangian, with generic fibre a smooth compact Abelian variety.
\end{theorem}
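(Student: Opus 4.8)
The plan is to study $\MM_s(\Sigma,\bbk,k_0)$ (with $\Sigma$ of genus one) by infinitesimal deformation theory, to extract the holomorphic symplectic form from Serre duality — here the triviality $K_\Sigma\cong\OO_\Sigma$ is the decisive feature of genus one — and finally to read off the fibres of the characteristic map through the spectral correspondence. The deformation theory is governed by the two-term complex $\mathcal{C}^\bullet\colon \End(\EE)\xrightarrow{\,D_\rho\,}\mathcal{R}$ in degrees $0,1$, where $D_\rho(\psi)=\psi\rho-\rho\psi$ and $\mathcal{R}$ is the sheaf of first-order variations $\dot\rho$ that keep the pair of type $\bbk$, i.e.\ those with $\rho^{-1}\dot\rho$ lying in the tangent space to the Iwahori cell cut out by the normal form $F_j(z)\diag_l(z^{k_{jl}})G_j(z)$ near each $z_j$. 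The tangent space at $(\EE,\rho)$ is $\bbh^1(\mathcal{C}^\bullet)$, the space $\bbh^0(\mathcal{C}^\bullet)$ consists of endomorphisms commuting with $\rho$, and $\bbh^2(\mathcal{C}^\bullet)$ is the obstruction space.

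For smoothness and dimension I would split off the trace, $\End(\EE)=\OO\oplus\End_0(\EE)$, and correspondingly $\mathcal{C}^\bullet=\mathcal{C}^\bullet_{\mathrm{tr}}\oplus\mathcal{C}^\bullet_0$. The trace part reproduces the Abelian analysis already carried out in the $\U(1)$ discussion: its moduli is a $\C^*$-bundle over the Jacobian $\mathrm{Jac}(\Sigma)$, smooth of dimension $g+1=2$, and this accounts for the summand $2$. For the traceless part, simplicity of $(\EE,\rho)$ (established just above) forces $\bbh^0(\mathcal{C}^\bullet_0)=0$; since $K_\Sigma\cong\OO_\Sigma$, Serre duality for the complex identifies $\bbh^2(\mathcal{C}^\bullet_0)$ with the dual of $\bbh^0(\mathcal{C}^\bullet_0)$, which therefore also vanishes. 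Hence the obstruction vanishes, $\MM_s$ is smooth, and $\dim\bbh^1(\mathcal{C}^\bullet_0)=-\chi(\mathcal{C}^\bullet_0)$. Because $g=1$ gives $\chi(\End_0(\EE))=0$, the Euler characteristic localizes at the points $z_j$; the local contribution of the model $\diag_l(z^{k_{jl}})$ is the dimension of the corresponding affine Schubert cell for $\Gl(n,\C)$, namely $\sum_{a<b}(k_{ja}-k_{jb})=[\vec{k}_j]$. Adding the trace contribution gives the asserted $2+\sum_{j}[\vec{k}_j]$.

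The symplectic form arises by combining the trace pairing on $\End_0(\EE)$ with the natural duality between the two terms of $\mathcal{C}^\bullet_0$: this realizes $\mathcal{C}^\bullet_0$ as self-dual up to a twist by $K_\Sigma$, and the induced cup product $\bbh^1(\mathcal{C}^\bullet_0)\times\bbh^1(\mathcal{C}^\bullet_0)\to H^1(\Sigma,K_\Sigma)\cong\C$ is skew and, by simplicity, nondegenerate. It is precisely the vanishing of $K_\Sigma$ that upgrades the Poisson structure of the general theory to a genuine symplectic form on $\MM_s$. For the integrable-system statement I would form the spectral curve $S^0\colon\det(\rho(z)-\lambda\id)=0$, compactify it to $S\subset\Sigma\times\bbp^1$ inside $|\pi_1^*\OO(D_+)\otimes\pi_2^*\OO(n)|$, and invoke the spectral (Beauville--Narasimhan--Ramanan type) correspondence: for generic, hence smooth, $S$, the fibre of $\MM_s\to Char(\bbk)$ consists of the line bundles $\mathcal{L}$ on $S$ with $\pi_{1*}\mathcal{L}=\EE$ and $\rho$ given by multiplication by $\lambda$. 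These form a torsor over $\mathrm{Jac}(S)$, a smooth compact Abelian variety, whose dimension $g(S)$ — computed by adjunction on $\Sigma\times\bbp^1$ — equals $1+\tfrac12\sum_j[\vec{k}_j]$, exactly half of $\dim\MM_s$. Since the tangent space to a fibre is $H^1(S,\OO_S)$, which is isotropic for the cup-product form, the fibres are Lagrangian and the map is a Lagrangian fibration.

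The main obstacle, and the place where I would spend the most care, is the local deformation analysis at the singular points: giving a clean sheaf-theoretic description of $\mathcal{R}$ (the admissible variations of $\rho$ preserving the type $\bbk$) via the Iwahori/lattice picture, and verifying that its local Euler-characteristic contribution is exactly the affine Schubert dimension $[\vec{k}_j]$ rather than an over- or under-count. Closely related is the need to ensure that the self-duality used for the symplectic form respects this polar structure, which is where the hypothesis that $D=\sum_j\trk{\vec{k}_j}z_j$ be principal re-enters. The statements about the generic fibre circumvent the genuinely hard degenerations of $S$ over the points $z_j$, so I would only need the smooth-$S$ case, but controlling how $S$ meets $\Sigma\times\{0,\infty\}$ in terms of the $k_{jl}$ — as recorded in the definition of $Char(\bbk)$ — is the remaining delicate point.
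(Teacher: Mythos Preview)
The paper does not prove this theorem: it is quoted verbatim as the main result of \cite{HuMa2}, adapted to $\Gl(n,\C)$, and no argument is supplied here. So there is no ``paper's own proof'' to compare against; what there is, in Section~5.3, is a sketch of the same deformation-theoretic machinery (the complex $\End(\EE)\xrightarrow{\ad_\rho}\ad(\EE,\rho)$, the identification of $\bbh^2$ with sections of $\End(\EE)\otimes K$ commuting with $\rho$, and the degree computation $\deg\ad(\EE,\rho)=\sum_i[\vec{k}_i]$), applied in higher genus and again attributed to \cite{HuMa2}.

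Your outline is the right one and matches the approach of \cite{HuMa2}. Two organisational differences are worth noting. First, you split off the trace to isolate the summand $2=g+1$ and then argue $\bbh^0_0=\bbh^2_0=0$ on the traceless part via self-duality and $K_\Sigma\cong\OO_\Sigma$; the paper (following \cite{HuMa2}) instead keeps the full complex, uses simplicity to get $\bbh^0=1$, and identifies $(\bbh^2)^\vee$ with sections of $\End(\EE)\otimes K$ commuting with $\rho$, which for generic $\rho$ reduces to $H^0(K)$, of dimension $g$. Both routes give $\bbh^1=(g+1)+\sum_i[\vec{k}_i]$, but the paper's version makes explicit that the obstructions live in the trace component, which is what guarantees smoothness (\cite[Thm~4.13]{HuMa2}). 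Second, your sheaf $\mathcal{R}$ of admissible variations is exactly the paper's $\ad(\EE,\rho)$, and the local contribution $[\vec{k}_j]$ you extract from the affine Schubert cell is the content of \cite[Lemma~4.9]{HuMa2}; you are right that this local computation is the crux, and it is indeed where \cite{HuMa2} does the real work. Your spectral/Lagrangian discussion is standard and correct, though one small caution: the generic fibre is a torsor over (a component of) a Jacobian-type variety rather than literally $\mathrm{Jac}(S)$, and the integrality of your formula for $g(S)$ is not automatic from the expression $1+\tfrac12\sum_j[\vec{k}_j]$ alone.
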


\subsection{Gauge theoretic dimensions}
Thus, in the case of $M=S^1\times T^2$,   Theorem \ref{thm:main} relates    
the moduli space of HE-monopoles to a complex moduli space described by Theorem
\ref{thm:elliptic}. In particular, we see that the moduli space for $\bbk\neq 0$ has real dimension
\begin{equation}
 \dim_\R{\MM}_{k_0}
(T^3,p_1,\ldots,p_N,\vec{k}_1,\ldots,\vec{k}_N)=4+2\sum_{j=1}^N[\vec{k}_j] .
\end{equation} 

The tangent space to the moduli space can also be understood from a gauge theoretic
point of view. In this context first order deformations of our HE-monopoles (modulo
gauge) correspond to the kernel of a complex
\begin{equation}\label{complex}
D^* +dB\colon \Omega^1(\ad(E))\oplus (\ad(E))\rightarrow \ad(E)\oplus
\Omega^2(\ad(E)
\end{equation}
where $D^*$ fixes the gauge infinitesimally, and $dB$ is the derivative of the
Bogomolny equation. In our case of simple bundles, the cokernel of this complex 
is of constant real dimension 4, while the index is given by a result of Pauly, 
for a quite general three-manifold.

\begin{theorem}[Extension of Pauly's result in \cite{pauly} to
higher order groups]\label{thm:vdim}
Let $M$ be a compact, oriented, and connected Riemannian $3$-manifold.  Fix
$N$ points
$p_1,\ldots,p_N\in M$, and $N$ sequences $\vec{k}_1,\ldots,\vec{k}_N$
of $n$ integers.  Then the real index of the deformation complex
(\ref{complex}) of a HE-monopole $(E,\nabla,\phi)$ is 
$2\sum_{j=1}^N[\vec{k}_j]$.
\end{theorem}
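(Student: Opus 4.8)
The plan is to compute the index as a sum of purely local contributions, one for each singular point $p_j$, and to show that the contribution at $p_j$ equals $2[\vec{k}_j]$. The starting observation is that on a \emph{closed} odd-dimensional manifold every elliptic operator has vanishing index, since the topological index $\int_M \mathrm{ch}(\sigma)\,\mathrm{Td}$ is an integral of an even-degree form over an odd-dimensional manifold. Hence the entire index of $D^*+dB$ must be a defect concentrated at the Dirac singularities. First I would set up the problem analytically: excise small balls $B^3$ around each $p_j$, impose on $M\setminus\bigcup_j B^3$ the Atiyah--Patodi--Singer boundary condition dual to the weighted Sobolev spaces in which the singular monopole lives, and write the index as a sum over $j$ of a model index on $B^3$ together with a bulk term on the complement. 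By excision the model index depends only on the local datum $\vec{k}_j$, and by the vanishing above the bulk term is zero, so that $\mathrm{index}(D^*+dB)=\sum_{j} \mathrm{index}_{\mathrm{loc}}(\vec{k}_j)$.

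To evaluate $\mathrm{index}_{\mathrm{loc}}(\vec{k}_j)$, I would use the Hopf lift of Proposition \ref{prop:Pauly}: the model Dirac singularity on $B^3$ lifts to an $S^1$-invariant ASD configuration on $B^4$, and the Bogomolny deformation complex downstairs is identified with the $S^1$-invariant part of the ASD deformation complex upstairs. The local index then becomes an $S^1$-equivariant index on $B^4$ (equivalently, an eta-invariant computation on the Hopf three-sphere linking the singularity), which for $n=2$ is exactly Pauly's calculation in \cite{pauly}. The new input for general $n$ is that everything is coupled to $\ad(E)$; since $E$ splits near $p_j$ as $L_{k_{j1}}\oplus\cdots\oplus L_{k_{jn}}$, after complexifying we have $\End(E)=\bigoplus_{a,b}L_{k_{ja}-k_{jb}}$, and the complexified local operator decouples as a direct sum over the ordered pairs $(a,b)$. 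Because the real index of the real operator equals the complex index of its complexification, we get $\mathrm{index}_{\mathrm{loc}}(\vec{k}_j)=\sum_{a,b}\iota(k_{ja}-k_{jb})$, where $\iota(m)$ is the equivariant contribution of a single $\U(1)$ Dirac factor of charge $m$.

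It remains to identify $\iota(m)$. The diagonal factors carry charge $m=0$ and contribute nothing. For an off-diagonal factor I expect $\iota(m)=|m|$, which is the $n=2$ specialisation of Pauly's formula: there the two off-diagonal charges are $\pm(k_{j1}-k_{j2})$ and together they give $2(k_{j1}-k_{j2})$. Granting $\iota(m)=|m|$ and using $k_{j1}\geq\cdots\geq k_{jn}$, the sum over all ordered pairs collapses:
\[\sum_{a\neq b}|k_{ja}-k_{jb}|=\sum_{a<b}(k_{ja}-k_{jb})+\sum_{a>b}(k_{jb}-k_{ja})=2\sum_{a<b}(k_{ja}-k_{jb})=2[\vec{k}_j],\]
matching Equation (\ref{def:contribution}). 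Summing over $j$ yields the stated value $2\sum_{j}[\vec{k}_j]$.

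The main obstacle is the analytic bookkeeping at the singularities rather than the algebra: one must verify that the deformation problem is Fredholm in the correct weighted spaces, that the decomposition into a bulk term plus geometry-independent local defects is legitimate, and --- most delicately --- pin down $\iota(m)=|m|$ together with any eta- or rho-invariant boundary correction arising on the Hopf three-sphere. This last point is where Pauly's $\SU(2)$ analysis does the real work; extending it amounts to checking that the weighted-space indices of the charge-$m$ factors genuinely add once the factors are decoupled near $p_j$, with no surviving cross terms in the limit. I would therefore treat $\iota(m)=|m|$ as the single computation to import or reprove, and regard the remainder as formal.
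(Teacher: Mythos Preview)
Your proposal is correct and follows essentially the same approach as the paper's proof sketch: vanishing of the index in the compact singularity-free case by odd-dimensionality, excision to isolate local contributions at the $p_j$, and computation of those contributions via the Hopf lift as an $S^1$-equivariant index. The paper merely asserts that Pauly's $\SU(2)$ argument ``extends with little work'' to higher rank, whereas you supply the mechanism explicitly---the splitting $\End(E)\simeq\bigoplus_{a,b}L_{k_{ja}-k_{jb}}$ near $p_j$ and the resulting identity $\sum_{a\neq b}|k_{ja}-k_{jb}|=2[\vec{k}_j]$---which is exactly the content of that extension.
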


The proof of Pauly, originally written for $\SU(2)$, extends with little work to
this more general case.  By exhibiting parametrices, Pauly shows that
despite the presence of singularities, the complex is Fredholm. This
result relies on the fact that near the singularities, the asymptotic behaviour
guarantees that a local lift  from the three-ball to the four-ball using the
Hopf fibration is non-singular. 
The index can then be obtained by an excision argument from
the case with no singularities.  In that (compact) case, since the
dimension is odd, Atiyah--Singer's index theorem tells us the index is
$0$.  The contribution to the index given by the singularities 
translates the problem into one over the three-sphere, and Pauly then uses the lift
to the four-sphere to transform the calculation into that of an $S^1$
equivariant index.

We note also that, in the case of the three-torus, when there are no singularities ($\bbk = 0$), 
we are in essence reduced to the flat (Abelian) case: the index is zero, 
and both kernel and cokernel are of constant real rank $4n$. This result is confirmed 
by a parameter count: $3n$ parameters for a flat connection, which is a representation  
of $\Z^3= \pi_1(T^3)$ into (the maximal torus of) $U(n)$, and $n$ parameters for a
 constant Higgs field.

\subsection{Higher genus} Returning to our complex descriptions of the moduli, 
many of the techniques used for studying the moduli space in \cite{HuMa2} also
apply to the case of HE-monopoles over $S^1\times \Sigma$ with $\Sigma$ of higher genus, 
apart from the derivation of the Poisson structure and the existence
of an integrable system. 

We first note that our moduli space of stable pairs can be examined  as a subspace 
of a space
that has already been constructed. Consider the divisor $D_{max}=-\sum_j
\min_l(k_{jl})z_j$. Then $\rho$ is a section of $\End(\EE)(D_{max})$, and one
can realise the space of our $(\EE,\rho)$ (for $t=(t_i)$ small) 
as a subvariety of the space of stable pairs consisting of a bundle  and an endomorphism with
poles at $D_{max}$. This moduli space ${\NN}(k_0, D_{max})$ has been
constructed in \cite{Nitsure, Simpson-1et2}. We can thus study 
${\MM}(\Sigma,k_0,\bbk,\vec{t})$ as a subvariety of  ${\NN}(k_0, D_{max})$.

A first step, however, is to check that at least some of our spaces 
${\MM}(\Sigma,k_0,\bbk, \vec{t})$ are non-empty, provided again that the divisor
$D$ is principal, which it must be as it 
represents $\det(\rho)$. Suppose 
that $D$ is non-zero.  For each point $z_i$, choose a permutation
$\sigma_i$ of $\{1,\ldots,n\}$ and set $D_j= \sum_i k_{i,\sigma_i(j)}z_i$. Let
$L$
be a
line bundle on the curve, and set $L_j = L(D_1+\cdots+D_j), j=1,\ldots,n$. 
There is a
natural meromorphic map $\rho_{j+1}\colon
L_j\rightarrow L_{j+1}$ with divisor
$D_{j+1}$. In the same vein, the natural map $L\to L_1$ yields a map
$\rho_1\colon L_n\rightarrow L_1$ when premultiplied by $\det(\rho)^{-1}$. 
This map has 
divisor 
$D_1$. Let $E$ be the bundle $\oplus_jL_j$; 
it has degree $n \deg(L) + \sum_i(n-i+1)\deg(D_i)$. 
Let $s$ be the residue modulo $n$ of this degree; as usual,
it is just the value modulo $n$ that is of any importance. 

\begin{proposition} Suppose that the divisor $D$ is principal  and that the
permutations are such that not all the degrees of $L_j$ are the same. The moduli
space ${\MM}(\Sigma,mn+s,\bbk, \vec{t})$ is non-empty.
Suppose in addition that one of the $L_j$ has degree $n$ greater than any of the
others. Then one can also produce  an element of ${\MM}(\Sigma,mn+r,\bbk,
\vec{t})$ for any $r$.
\end{proposition}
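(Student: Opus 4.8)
The plan is to produce a single stable pair realizing the numerical data, namely the cyclic pair $(E,\rho)$ of the statement, and then to modify it to reach the remaining degrees. First I would check that the construction is well posed and of the right type. The maps $\rho_{j+1}\colon L_j\to L_{j+1}$ with divisor $D_{j+1}$ exist tautologically since $L_{j+1}=L_j(D_{j+1})$, and the closing map $\rho_1\colon L_n\to L_1$ exists precisely because $D$ is principal, so that $L_n=L(D)\cong L$. Writing $\rho$ as the cyclic (weighted shift) endomorphism of $E=\oplus_j L_j$ with these entries, one has $\det\rho=\pm\prod_j\rho_j$, whose divisor is $\sum_j D_j=D=\sum_i\trk{\vec{k}_i}z_i$. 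Near $z_i$ the shift matrix is a permutation matrix times $\diag_j(z^{k_{i,\sigma_i(j)}})$ up to holomorphic invertible factors; since a permutation matrix is invertible over the local ring, $\rho$ is locally equivalent to $\diag_j(z^{k_{i,\sigma_i(j)}})$, so its elementary divisors at $z_i$ are the multiset $\{z^{k_{i,l}}\}_l$. Hence $(E,\rho)$ has type $\bbk$, and it remains only to prove $\vec{t}$-stability.

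The heart of the argument is this stability, and this is where I expect the real work to lie. The key structural fact is that an eigenvector of a cyclic weighted shift is Vandermonde-like, with all of its $n$ components nonzero; consequently any proper nonzero $\rho$-invariant subbundle $\EE'$ is forced to spread across the summands. I would make this quantitative on determinants. The line $\det\EE'=\Lambda^r\EE'$ is a $\Lambda^r\rho$-invariant line subbundle of $\Lambda^r E=\bigoplus_{|S|=r}L_S$, with $L_S=\otimes_{j\in S}L_j$, and at a generic point its fibre is $v_{\lambda_1}\wedge\cdots\wedge v_{\lambda_r}$ for the $r$ eigenvalues selected by $\EE'$. The $r\times r$ minors of the eigenvector matrix are generalized Vandermonde determinants, generically nonzero, so the projection $\det\EE'\to L_S$ is a nonzero sheaf map for every $S$. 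This gives $c_1(\EE')=c_1(\det\EE')\le\deg L_S$ for all $S$, hence $c_1(\EE')\le\min_{|S|=r}\deg L_S$, the sum of the $r$ smallest $\deg L_j$, which is $\le\tfrac rn\deg E$ and is strictly smaller unless all the $\deg L_j$ coincide. Carrying the same bound through with the weights included — using that the $\vec{t}$-degree of the rank-one pair $(\det\EE',\Lambda^r\rho)$ equals $\delta_{\vec{t}}(\EE',\rho)$, since the weight term there is the sum of the chosen weights — yields $\mu_{\vec{t}}(\EE')\le\mu_{\vec{t}}(E)$, strict exactly under the hypothesis that the $\deg L_j$ are not all equal. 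This establishes $\vec{t}$-stability, and since varying $\deg L$ moves $\deg E$ by multiples of $n$ while fixing the residue, it gives non-emptiness of $\MM(\Sigma,mn+s,\bbk,\vec{t})$ for every $m$.

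The delicate points in this step, which I expect to be the main obstacle, are twofold: making rigorous the claim that the relevant minors are generically nonvanishing for a genuine rank-$r$ invariant subbundle (one must see that $\EE'$ selects $r$ eigenvalue-sheets monodromy-consistently and that the induced determinant is the expected Vandermonde-type expression), and the careful bookkeeping of the weights at the $z_i$ so that the degree bound passes uniformly to the $\vec{t}$-degree. Equivalently, one can run the same projection argument slice by slice, bounding $\deg\EE'_t\le\tfrac rn\deg E_t$ for each $t$ and then averaging via the running-degree formula of the independence remark; either route reduces the weighted statement to the clean unweighted inequality, but the translation is where the genuine care is needed.

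For the last assertion, with one summand $L_{j_0}$ of degree $n$ above all the others, I would reach the remaining residues by elementary (Hecke) modifications at a generic point $q\notin\{z_1,\dots,z_N\}$. Each such modification lowers $\deg E$ by one without disturbing the local structure at the $z_i$, so the type remains $\bbk$; performing $a=0,1,\dots,n-1$ of them realizes every residue $r\equiv s-a$. The degree-$n$ gap at $L_{j_0}$ furnishes exactly the slope margin needed to guarantee that the inequality $\mu_{\vec{t}}(\EE')<\mu_{\vec{t}}(E)$ survives all of these modifications, so each modified pair is again $\vec{t}$-stable; varying $\deg L$ then produces an element of $\MM(\Sigma,mn+r,\bbk,\vec{t})$ for every $m$ and every $r$. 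The one point to watch is that the modification be carried out compatibly with $\rho$, so that the modified endomorphism stays regular and invertible away from the $z_i$; the genericity of $q$ together with the gap make this possible.
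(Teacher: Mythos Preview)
Your argument is essentially the paper's: the same cyclic pair $(E,\rho)$, the same Vandermonde-type reason why any rank-$r$ invariant subbundle injects nontrivially into every sub-sum $L_{i_1}\oplus\cdots\oplus L_{i_r}$ (your $\Lambda^r$ packaging is equivalent to the paper's direct projection), and Hecke modifications to hit the remaining degree classes. The paper is slightly terser on the $\vec t$-weight bookkeeping you flag; your slice-by-slice reduction via the average-degree formula is a reasonable way to make that step honest.

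The one place where your proposal is genuinely vaguer than the paper is the Hecke step. You modify at a generic point $q$ and defer the question of $\rho$-compatibility to ``genericity of $q$ together with the gap''; but the gap is a slope buffer, not a mechanism for making $\rho$ descend to the modified bundle. The paper's device is to perform the Hecke \emph{along an eigenspace of $\rho$}: if the hyperplane you kill at $q$ is spanned by $n-1$ eigenvectors of $\rho(q)$, then $\rho$ automatically restricts to the subsheaf $\EE'\subset\EE$ with no new singularity at $q$, so the type stays $\bbk$. This also streamlines the stability check after modification: any $\rho$-invariant $V'\subset\EE'$ is in particular a $\rho$-invariant subsheaf of $\EE$, so the original bound gives $\deg(V')/\rank(V')\le$ (average of the $\rank(V')$ smallest $\deg L_j$), and the degree-$n$ gap at $L_{j_0}$ then yields $\deg(V')/\rank(V')<(\deg\EE-n)/n\le \deg(\EE')/n$ uniformly for all $1\le a\le n-1$ modifications. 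Your margin heuristic is the same inequality, but phrased this way you avoid having to redo the Vandermonde analysis inside the modified bundle.
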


\begin{proof} Consider the  bundle $E$ constructed above of degree $mn+s$.  Define the map
$\rho$ by 
\begin{equation}\label{cyclic}
\rho = \begin{pmatrix}
0& 0&0&\cdots&0&\rho_1\\
\rho_2& 0&0&\cdots&0&0\\
0&\rho_3& 0&\cdots&0&0\\
0&0&\rho_4&\ddots&0&0\\
\ddots&\ddots&\ddots&\ddots&\ddots&\vdots\\
0& \ddots&0&0&\rho_{n-1}&0\\
\end{pmatrix}
\end{equation}
It has the right polar structure. We can ask when the pair $(\EE,\rho)$ is
stable. The determinant of $\rho$ has divisor $D$;   away from the singular set of
$\rho$,  the eigenvalues of $\rho$, which are the $n$-th roots of $\det(\rho)$,
are distinct. Now consider a subbundle $V$ of rank $k$ that is
$\rho$-invariant; locally, it is a sum of eigenbundles. The projection of $V$
onto any sum $L_{i_1}\oplus L_{i_2}\oplus\cdots\oplus L_{i_k}$ has generically
non-zero determinant since it is  a Vandermonde-type determinant of
the eigenvalues involved.
Globally, $V$ is then a subsheaf of
$L_{i_1}\oplus L_{i_2}\oplus\cdots\oplus L_{i_k}$; this inclusion bounds the degree of $V$. If we
have chosen the $L_{i_j}$ of smallest degree, $V$ cannot
destabilise. 

To consider the case of general degree, one can take Hecke transforms $\EE'$ of
$\EE$ along the eigenspaces of $\rho$, obtaining subsheaves of $\EE$ and
reducing
the degree by $1,2,\ldots,n-1$ in turn. The result is still stable, as any
$\rho$-invariant subbundle $V'$ of $E'$, thought of as a subsheaf of $E$, then
satisfies $\deg(V')/\rank(V')< (-n +\deg(E))/n\leq \deg(E')/n$.\qed\end{proof}

One would like to get an idea of the Zariski tangent space of 
${\MM}(\Sigma,j,\bbk,\vec{t})$,
at the points of the moduli space we have just constructed. As in
\cite[Sec.\ 4]{HuMa2}, let us consider the subbundle 
\begin{equation} 
\Delta_\rho \ := \ \{(a,b) \mid \ a+Ad_\rho(b)=0\}
\end{equation}
of $\End(\EE)\oplus \End(\EE)$. Denote the quotient by
\begin{equation} 
\ad(\EE,\rho) \ := \ [ \End(\EE)\oplus \End(\EE)]/\Delta_\rho.
\end{equation}
We get the short exact sequence
\begin{equation} 
0\rightarrow \Delta_\rho \rightarrow [ \End(\EE)\oplus \End(\EE)] \rightarrow 
\ad(\EE,\rho) \rightarrow 0.
\end{equation}

The degree of $\ad(E,\rho)$ is computed by \cite[Lemma 4.9]{HuMa2} to be
$\sum_{i=1}^N [\vec{k}_i]$. As in \cite[Cor.\ 4.3]{HuMa2}, we have the following
proposition;
see also \cite{Biswas-Ramanan,Bottacin-symplectic-stable-pairs,Markman}.

\begin{proposition}
The infinitesimal deformations of the pair 
$(\EE,\rho)$ in ${\MM}(\Sigma,j,\bbk,\vec{t})$ 
are naturally identified by the first hyper-cohomology
of the complex (in degrees $0$ and $1$)
\begin{equation}
\End(\EE) \stackrel{\ad_\rho}{\longrightarrow}  \ad(\EE,\rho). 
\label{deformation}
\end{equation}
\end{proposition}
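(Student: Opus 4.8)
The plan is to identify the Zariski tangent space with the space of first--order deformations of the pair $(\EE,\rho)$ modulo infinitesimal automorphisms, and then to recognise that space, term by term, as the Dolbeault model of $\mathbb{H}^1$ of the complex (\ref{deformation}). The organising observation is a concrete reading of the degree--one term. The sheaf homomorphism
\[
\Phi\colon \End(\EE)\oplus\End(\EE)\longrightarrow \{\text{meromorphic endomorphisms of }\EE\},\qquad \Phi(a,b)=a\rho+\rho b,
\]
has kernel exactly $\Delta_\rho$ (since $a\rho+\rho b=0$ is equivalent to $a+Ad_\rho b=0$), so it induces an isomorphism of $\ad(\EE,\rho)=[\End(\EE)\oplus\End(\EE)]/\Delta_\rho$ onto $\mathrm{im}\,\Phi$. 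I claim $\mathrm{im}\,\Phi$ is precisely the sheaf of infinitesimal deformations of $\rho$ that preserve its singular type $\bbk$: writing $\rho=F(z)\diag_l(z^{k_{jl}})G(z)$ near $z_j$ as in Iwahori's theorem, a one--parameter family $F_t\diag_l(z^{k_{jl}})G_t$ of type--$\bbk$ maps has derivative $(\dot F F^{-1})\rho+\rho(G^{-1}\dot G)$, so the tangent space to the stratum of type--$\bbk$ maps is exactly $\{a\rho+\rho b\mid a,b \text{ holomorphic}\}$. Under this dictionary the differential $\ad_\rho$ of (\ref{deformation}) is $u\mapsto\Phi(-u,u)=[\rho,u]$, the infinitesimal action of a gauge transformation $u\in\End(\EE)$ on $\rho$; in particular its kernel is the centraliser of $\rho$, recovering $H^0$ of the complex as the infinitesimal automorphisms of the pair.

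With this in hand I would spell out a first--order deformation of $(\EE,\rho)$ in Dolbeault form as a pair $(\alpha,\dot\rho)$, where $\alpha\in\Omega^{0,1}(\End(\EE))$ deforms the holomorphic structure by $\bar\partial_\EE\mapsto\bar\partial_\EE+\alpha$ and $\dot\rho$ is a type--preserving deformation of $\rho$, that is a smooth section of $\ad(\EE,\rho)$. Holomorphicity of the deformed endomorphism for the deformed operator linearises to the single equation
\[
\bar\partial\dot\rho=[\rho,\alpha]=\ad_\rho(\alpha),
\]
with $\ad_\rho$ extended to $(0,1)$--forms, while the constraint that the type stay equal to $\bbk$ (and that $\rho$ remain invertible off $z_1,\dots,z_N$, which holds automatically to first order) is nothing but the requirement $\dot\rho\in\Omega^0(\ad(\EE,\rho))$. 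An infinitesimal complex gauge transformation $u\in\Omega^0(\End(\EE))$ acts by $\alpha\mapsto\alpha-\bar\partial u$ and $\dot\rho\mapsto\dot\rho+[u,\rho]$, i.e.\ it changes $(\alpha,\dot\rho)$ by $-(\bar\partial u,\ad_\rho u)$. Since $\Sigma$ is a curve the Dolbeault resolutions of $\End(\EE)$ and $\ad(\EE,\rho)$ terminate in degree one, and the total complex of (\ref{deformation}) reads
\[
\Omega^{0}(\End(\EE))\ \xrightarrow{\ u\mapsto(\bar\partial u,\,\ad_\rho u)\ }\ \Omega^{0,1}(\End(\EE))\oplus\Omega^{0}(\ad(\EE,\rho))\ \xrightarrow{\ (\alpha,\dot\rho)\mapsto\bar\partial\dot\rho-\ad_\rho\alpha\ }\ \Omega^{0,1}(\ad(\EE,\rho)).
\]
The cocycles of the middle term are exactly the deformations solving the displayed equation, and the coboundaries are exactly the gauge directions, so the tangent space is, term for term, the middle cohomology $\mathbb{H}^1$.

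I would then record the expected structure through the hypercohomology long exact sequence of (\ref{deformation}), which presents $\mathbb{H}^1$ as the extension
\[
0\to\operatorname{coker}\!\bigl(H^0(\End(\EE))\xrightarrow{\ad_\rho}H^0(\ad(\EE,\rho))\bigr)\to\mathbb{H}^1\to\ker\!\bigl(H^1(\End(\EE))\xrightarrow{\ad_\rho}H^1(\ad(\EE,\rho))\bigr)\to 0,
\]
exhibiting the tangent space as built from deformations of $\rho$ with $\EE$ held fixed (modulo gauge) together with those deformations of the bundle $\EE$ compatible with the presence of $\rho$. Applying the same bookkeeping to $\rho$--invariant subsheaves reproduces the degree $\sum_i[\vec{k}_i]$ of $\ad(\EE,\rho)$ computed in \cite[Lemma 4.9]{HuMa2}, and the whole argument parallels the Higgs--pair deformation theory of \cite[Cor.\ 4.3]{HuMa2} and \cite{Biswas-Ramanan,Bottacin-symplectic-stable-pairs,Markman}; the only structural difference is that $\rho$ is an untwisted \emph{meromorphic automorphism} rather than a $K$--valued field.

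The step I expect to be the main obstacle is the local analysis at the singular points $z_j$: one must verify that the abstractly defined quotient $\ad(\EE,\rho)$ captures all, and only, the first--order deformations that keep the Iwahori type equal to $\bbk$ --- neither discarding honest deformations nor admitting spurious ones that would change some order $k_{jl}$. Away from $z_1,\dots,z_N$ this is the standard deformation theory of a bundle with an endomorphism, and the ellipticity needed to descend from the formal quotient to a finite--dimensional $\mathbb{H}^1$ is unaffected by the punctures; it is precisely the matching of the pole data with the stratum $\{F(z)\diag_l(z^{k_{jl}})G(z)\}$ that requires the explicit normal form, and this is exactly the bookkeeping for which the subbundle $\Delta_\rho$ was designed.
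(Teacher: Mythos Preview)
Your argument is correct and is precisely the standard one the paper defers to: the paper does not write out a proof but cites \cite[Cor.\ 4.3]{HuMa2} and \cite{Biswas-Ramanan,Bottacin-symplectic-stable-pairs,Markman}, and your Dolbeault--model computation (identifying $\ad(\EE,\rho)$ with the sheaf of type--preserving first--order variations of $\rho$ via $\Phi(a,b)=a\rho+\rho b$, then writing out the total complex and matching cocycles/coboundaries with deformations/gauge) is exactly the argument those references carry out. Your identification of the local step at the $z_j$ --- that $\mathrm{im}\,\Phi$ is the tangent to the Iwahori stratum $F\,\diag_l(z^{k_{jl}})\,G$ --- is the one nontrivial point, and you have it right.
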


Roughly, the first hypercohomology combines both $H^1(\Sigma, \End(\EE))$, the
deformations of the bundle, and $H^0(\Sigma, \ad(\EE,\rho))$, the
deformations of $\rho$.

The dimension  $\bbh^1$ of the first hypercohomology is given by 
\[\bbh^1 = \bbh^0 + \bbh^2 -\chi(\End(\EE))+\chi(\ad(\EE,\rho)) =  \bbh^0 +
\bbh^2 +\sum_{i=1}^N [\vec{k}_i]\]
If $(\EE,\rho)$
is simple, the dimension $\bbh^0$ is $1$, as in \cite[Sec.\ 4]{HuMa2}.
The dual space to the second cohomology is given, again as in
\cite[Sec.\ 4]{HuMa2}, by the kernel of 
$\ad_\rho\colon\Delta_\rho\otimes K \to 
\End(\EE) \otimes
K$, that is the sections of $\End(\EE)\otimes K$ that commute with $\rho$. 
If $\rho$ is generically regular, this kernel is generated away from the poles by the powers $1, \rho,\ldots,\rho^{n-1}$ of $\rho$, tensored with $K$. Globally, the sections of the kernel are given as expressions $\sum_{i=0}^{n-1}a_i\rho^i$, where $a_i$ is a form such that the product $a_i\rho^i$ is holomorphic. 
In our case, the coefficient $a_i$ must have divisor greater or equal to
$\max(-D_1- D_2-\cdots-D_i, -D_2-D_3-\cdots-D_{i+1},\ldots,
-D_n-D_1-\cdots-D_{i-1})$. For fairly general choices of divisors, if one has
more than $g$ points, this condition forces $a_i=0$ for $i>0$ and $a_0$ to be
 a holomorphic one-form; one then has $dim(\bbh^2) = g$. This computation yields
\[dim(\bbh^1) = (g+1) +\sum_{i=1}^N [\vec{k}_i].\]
When the elements of  $\bbh^2$ live in the trace component $a_0$, as it is the
case here under our genericity assumptions on the divisors, it is shown in
\cite[Thm 4.13]{HuMa2} that the deformations are unobstructed and
the space
is smooth.

\begin{remark} If  $\EE$ has trivial determinant and $\det(\rho)=1$, one can
consider the space of deformations that preserve this property; this 
situation corresponds to $\SU(n)$ monopoles. (We note that this constrains the location of the singularities in the circle direction.) As the elements of $\bbh^0$,
$\bbh^2$ lie in the trace component, the deformations keeping one in $\Sl(n)$
lie in a $\sum_{i=1}^N [\vec{k}_i]$-dimensional space; the extra $g+1$
parameters for $\Gl(n)$ correspond to rescaling $\rho$ (one parameter) and tensoring $E$ by a
line bundle ($g$ parameters). \end{remark}

\subsection{\texorpdfstring{Higher genus: $\SU(2)$, $\U(2)$}{Higher genus: SU(2), U(2)}}
The preceding results are  only  rather partial: as one can see, the
combinatorics of the degrees is fairly complicated. We now consider the case of
$\Sl(2)$-bundles, with endomorphisms of determinant one; these correspond to 
$\SU(2)$-monopoles.  Let us suppose given a pair $(E,\rho)$, with singularites
of type $(k_i,-k_i), k_i>0 $ at points $z_i$. Let $D_+$ be the divisor
$\sum_ik_iz_i$. By what is now a fairly standard construction, one has a
spectral curve $\{(z,\lambda)\in \Sigma\times\PP^1\mid \det(\rho(z)-\lambda\id)=0\}$ in $\Sigma\times \PP^1$, giving a two
sheeted cover of $\Sigma$. In addition, one can define what is generically a
line bundle $L$ over the spectral curve; away from the poles of $\rho$, it is
the quotient sheaf, defined over the total space of the bundle ${\OO}(D_+)$ over
$\Sigma$ as
\[0\rightarrow E\buildrel{\rho-\lambda\id}\over{\rightarrow} E\rightarrow L.\]
We note that as the spectrum is invariant  under $\lambda\mapsto \lambda^{-1}$, the spectral
curve is the pullback of the graph of the function $tr(\rho)$ under the map
$f=\lambda+\lambda^{-1}$. One sees that the singular points of the endomorphism $\rho$ are
located  at the poles of $f$, and that the  branch locus $B$ for projection
of the spectral curve to $\Sigma$ is given by the inverse image of $\{-2, 2\}$. The
curve is smooth if all the points of $B$ occur with multiplicity one. The curve 
has an involution $i$, and the determinant form on the bundle $E$ identifies
$L^*$ with $i^*(L)(-B)$ (as in \cite{HitchinSDRiemann}),
so that $L$ lies in a suitable Prym variety.

Conversely, given the spectral curve $S$ and the line bundle $L$ lying in the Prym variety, one can reconstruct $E$ as a push-down $\pi_*(L)$, and $\rho$ as the endomorphism on $E$ induced by multiplication by $\lambda$ on $L$. If $S$ is smooth, stability is automatic: any invariant subbundle corresponds to a subset of the eigenvalues away from the branch points, and going around these branch points permutes them, so that an invariant subbundle is necessarily the whole bundle.

Thus
\begin{proposition}
An $\SU(2)$-monopole on $S^1\times\Sigma$, with singularities $\bbk$, yields 
a pair $(f, L)$ where $f$ is a meromorphic function with polar divisor   $D_+$, 
and $L$ is a sheaf over the double cover $S$ of $\Sigma$ branched over $B$, the 
locus defined by $f^{-1}(\{2,-2\})$. If all the points in the support of $B$ occur with 
multiplicity one in $B$, the spectral curve $S$ is smooth, and  $L$ is a line bundle  
on $S$ belonging to the Prym variety of sheaves on $S$ satisfying 
$L^*\simeq i^*(L)(-B)$, where $i$ is the natural involution.

Conversely, given $(f,L)$, where $f$ has polar divisor $D$, if all the points in the 
support of $B$ occur with multiplicity one in $B$, then the double cover $S$ is smooth. 
If $L$ belongs to the Prym variety, then the pair $(f,L)$ corresponds to a singular $\SU(2)$-monopole.
\end{proposition}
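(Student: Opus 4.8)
The plan is to exploit the spectral correspondence, reducing everything to the bundle-pair picture already in hand. By Theorem \ref{thm:main2}, an $\SU(2)$-monopole with singularities $\bbk$ is the same datum as a $\vec{t}$-stable pair $(\EE,\rho)$ with $\EE$ an $\Sl(2)$-bundle and $\det(\rho)=1$, of singularity type $\bbk=((k_i,-k_i),z_i)$. It therefore suffices to set up a bijection between such pairs and the spectral data $(f,L)$, treating the two directions separately; the crux will be controlling the behaviour at the singular points $z_i$ and at the branch points $B$.

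For the forward direction, I would begin with the characteristic polynomial $\det(\rho(z)-\lambda\id)=\lambda^2-\tr(\rho)\lambda+\det(\rho)=\lambda^2-\tr(\rho)\lambda+1$. Setting $f=\tr(\rho)$, the eigenvalues $\lambda_\pm$ satisfy $\lambda_+\lambda_-=1$ and $\lambda_++\lambda_-=f$, so the spectral curve $S$ is precisely the pullback of the graph of $f$ under $\lambda\mapsto\lambda+\lambda^{-1}$, and the involution $i\colon(z,\lambda)\mapsto(z,\lambda^{-1})$ acts on $S$. Near $z_i$ the normal form $\diag((z-z_i)^{k_i},(z-z_i)^{-k_i})$ shows that $\tr(\rho)$ has a pole of order $k_i$, giving $f$ the polar divisor $D_+=\sum_i k_i z_i$; simultaneously $S$ meets $\Sigma\times\{0,\infty\}$ exactly over the $z_i$. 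The branch locus is where the discriminant $f^2-4$ vanishes, i.e. $B=f^{-1}(\{2,-2\})$, and $S$ is smooth when these points all occur with multiplicity one. I would then define $L$ as the cokernel of $\rho-\lambda\id$ acting on $\pi^*\EE$, a rank-one sheaf that is a genuine line bundle once $S$ is smooth, and use the determinant pairing on $\EE$ (with $\det\rho=1$) together with the standard argument of \cite{HitchinSDRiemann} to identify $L^*\simeq i^*(L)(-B)$, placing $L$ in the Prym variety.

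For the converse, I would reconstruct $(\EE,\rho)$ by push-forward: given $f$ with polar divisor $D_+$, form $S\subset\Sigma\times\PP^1$ as above, set $\EE=\pi_*(L)$ (a rank-two bundle since $\pi$ is a double cover), and let $\rho$ be the endomorphism induced by multiplication by $\lambda$. One checks that $\det\rho=\lambda_+\lambda_-=1$ from the constant term of the spectral equation, and that the polar behaviour of $\rho$ over each $z_i$ reproduces the weights $(k_i,-k_i)$, using that $S$ passes through $\lambda=0,\infty$ there with multiplicity $k_i$. Stability is automatic when $S$ is smooth: a $\rho$-invariant subbundle is, away from $B$, a choice of one of the two eigenline bundles, but monodromy around a branch point of $B$ interchanges the two sheets, so no proper invariant subbundle can be globally defined, whence $(\EE,\rho)$ is $\vec{t}$-stable. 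Feeding this pair into Proposition \ref{prop:surjective} produces the desired $\SU(2)$-monopole.

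The main obstacle I anticipate is not the generic spectral dictionary, which is classical, but the analysis at the singular points $z_i$, where one eigenvalue tends to $0$ and the other to $\infty$: there one must verify that the cokernel sheaf $L$ genuinely extends as a line bundle across $\Sigma\times\{0,\infty\}$ and that, conversely, the push-forward reproduces the precise normal form $F_i(z)\diag((z-z_i)^{\pm k_i})G_i(z)$ of type $\bbk$. Matching the order of vanishing of $L$ along the two points of $S$ over $z_i$ with the weight $k_i$, and confirming that the Prym/determinant condition survives the compactification to $\Sigma\times\PP^1$, is where the care is needed; this is handled by local computation in the normal coordinates already used in Section~\ref{sec:montopair}.
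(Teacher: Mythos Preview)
Your proposal is correct and follows essentially the same approach as the paper: the paper's argument is the discussion in the paragraphs immediately preceding the proposition, which sets up the spectral curve via $\det(\rho-\lambda\id)=0$, identifies it with the pullback of the graph of $f=\tr(\rho)$ under $\lambda+\lambda^{-1}$, defines $L$ as the cokernel sheaf, invokes \cite{HitchinSDRiemann} for the Prym condition, and for the converse reconstructs $(\EE,\rho)$ as $(\pi_*L,\text{multiplication by }\lambda)$ with stability deduced from the branch-point monodromy permuting the sheets. Your anticipated obstacle at the singular points $z_i$ is a fair point of care that the paper does not dwell on explicitly.
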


One can then count parameters. For $d_+> 2g-2$, the space of functions $f$ with 
divisor exactly $D_+$ has dimension
$d_++1-g$, for $d_+ = \deg(D_+)$ . If the locus $f^{-1}(\{2,-2\})$ consists of distinct
points, the genus of $S$, from the Riemann--Hurwitz formula, is then $2g-1+d_+$,
and so the Prym variety has dimension $g-1+d_+$; the $\SU(2)$ moduli space thus
has dimension $2d_+$ in all. 

We now can consider the more general case of a $\Gl(2,\C)$ bundle, and a map
$\rho$ with singularities $\bbk=\bigl(\bigl((k_{1,+},k_{1,-}),z_i\bigr),\ldots,\bigl((k_{N,+},k_{N,-}),z_i\bigr)\bigr)$, with $k_{i,+}\geq
k_{i,-}$; this pair correspond to a $\U(2)$ (HE)-monopole. There is again the
constraint imposes  by the requirement that the divisor $\sum_i(k_{i,+} + k_{i,-})z_i$
be principal, as it must be the divisor of $\det(\rho)$. The same constructions
give one a double cover $S$ of $\Sigma$, defined by the equation
\[\lambda^2 - tr(\rho)\lambda + \det(\rho) = 0.\]
As above, one has a sheaf $L$ over $S$, which is a line bundle if $S$ is smooth. $L$ no longer necessarily satisfies the Prym condition.

Again, one can count parameters. The function $tr(\rho)$ must have a divisor which is greater than $\sum_ik_{i,-}z_i$; if $d=-\sum_ik_{i,-}= \sum_ik_{i,+}$, then for $d>2g-2$ this gives $d+1-g$ parameters. The determinant is fixed, up to scale; this gives one extra parameter. On the line bundles, one has $2g-1+d$ parameters, giving $2d+g+1$ parameters in all: roughly, the parameters for the $\Sl(2,\C)$ case, plus a line bundle on $\Sigma$, plus a scale parameter for $\rho$.

\section{Monopoles on the product of a Riemann surface and an interval}
One can use the result to consider the case of a monopole on the product
$I\times \Sigma$ of a Riemann surface and an interval $I= [0,c]$. In this case,
the relevant holomorphic data is a pair of holomorphic bundles $E= E(0), E'=
E(c)$ and a meromorphic automorphism $\rho\colon E\rightarrow E'$. If $z_i, i=
1,\ldots,N$ are the singularities of $\rho$, choose $t_i$ so that $p_i= (t_i,
z_i)$ are the locations of our eventual monopole's singularities.

\begin{proposition} Given $\rho\colon \EE\rightarrow \EE'$, one can find a
complementary $\sigma\colon \EE'\rightarrow \EE$ with singularities at
$z_{N+1},\ldots,z_{N+M}$ such that $\rho, \sigma$ have disjoint singularities,
and $t_{N+1},\ldots,t_{N+M}$ such that $(\EE,\sigma\circ\rho)$ is a
$\vec{t}$-stable pair. One can also arrange for the HE-constant $C$ to be zero.
\end{proposition}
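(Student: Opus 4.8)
The plan is to read the composite $\theta:=\sigma\circ\rho$ as the \emph{monodromy} of the circle monopole we wish to build: once $(\EE,\theta)$ is a $\vec t$-stable pair of the appropriate type, Proposition \ref{prop:surjective} produces an HE-monopole on $S^1\times\Sigma$, and restricting it to the sub-interval $[0,c]$ returns the given interval data with scattering map $\rho$. So the statement reduces to producing a meromorphic $\sigma\colon\EE'\to\EE$, with poles at auxiliary points $z_{N+1},\dots,z_{N+M}$ disjoint from $z_1,\dots,z_N$, for which $(\EE,\theta)$ is $\vec t$-stable, and the requirement $C=0$ is, by Equation (\ref{eqn:geometricC}), exactly $\delta_{\vec t}(\EE,\theta)=0$. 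The degree condition (\ref{eqn:condition0}) is automatic: $\det\theta=\det\sigma\cdot\det\rho$ is a meromorphic function on $\Sigma$, and because $\sigma$ runs $\EE'\to\EE$ while $\rho$ runs $\EE\to\EE'$, the new weights satisfy $\sum_{j>N}\trk{\vec k_j}=c_1(\EE)-c_1(\EE')=-\sum_{j\le N}\trk{\vec k_j}$, whence $\sum_{j=1}^{N+M}\trk{\vec k_j}=0$.

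First I would construct $\sigma$. Fixing auxiliary points $z_{N+1},\dots,z_{N+M}$ away from $z_1,\dots,z_N$ and a divisor $D=\sum_{j>N}d_jz_j$ supported on them, the meromorphic maps $\EE'\to\EE$ with poles bounded by $D$ are the global sections of $\EE'^{*}\otimes\EE(D)$, a space that is large once $\deg D$ is taken large enough (Riemann--Roch). Since $\rho$ is a fixed meromorphic isomorphism, $\sigma\mapsto\sigma\circ\rho$ is injective and linear away from the singular set, so as $\sigma$ varies the composite $\theta=\sigma\circ\rho$ sweeps out a correspondingly large family of meromorphic endomorphisms of $\EE$; near $z_j$ with $j\le N$ the factor $\sigma$ is holomorphic and invertible, so $\theta$ inherits the weights $\vec k_j$ of $\rho$ there, while near the new points $\theta$ carries the weights of $\sigma$. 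I would then choose $\sigma$ generically so that $\theta$ is generically regular semisimple and its spectral curve $S_\theta=\{\det(\theta-\lambda\id)=0\}\subset\Sigma\times\PP^1$ is \emph{irreducible}, with simple branch points over $\Sigma$.

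Stability then follows with no reference to $\vec t$. A proper $\theta$-invariant subbundle would, over the open set where $\theta$ is regular semisimple, be a union of eigenline sub-bundles, hence would single out a proper subset of the sheets of $S_\theta\to\Sigma$ that is invariant under the branch monodromy; irreducibility of $S_\theta$ makes this monodromy transitive, so no such subset exists and $(\EE,\theta)$ has no proper invariant subbundle at all. Stability is therefore vacuous and holds for \emph{every} choice of $\vec t$ --- this is precisely the mechanism already used for the smooth spectral curve in the $\SU(2)$ discussion above, and it decouples the stability question from the circle parameters.

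It remains to arrange $C=0$, that is $\delta_{\vec t}(\EE,\theta)=c_1(\EE)-\tfrac1T\sum_{j=1}^{N+M}\trk{\vec k_j}t_j=0$. This is a single linear equation in the circle data $t_1,\dots,t_{N+M},T$; since $\sum_j\trk{\vec k_j}=0$ the weighted sum is unchanged by a common shift of the $t_j$, and placing the $t_j$ with $j\le N$ in $(0,c)$ and the auxiliary times in $(c,T)$ one solves it while keeping $0<t_1\le\cdots\le t_{N+M}\le T$ by fixing an ordered pattern and then rescaling and shifting --- an elementary adjustment, as stability is already insensitive to this choice. I expect the real difficulty to lie in the construction step: verifying that a generic $\sigma$ in the linear system actually yields an irreducible spectral curve with simple branching, i.e.\ that the family $\{\sigma\circ\rho\}$ is rich enough --- through the freedom in $\sigma$ and in the new weights --- to force enough ramification for the branch monodromy to act transitively. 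Once that is granted, the remaining steps are formal.
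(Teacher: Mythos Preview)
Your strategy is the paper's strategy: kill stability by arranging that $\theta=\sigma\circ\rho$ has \emph{no} proper invariant subbundle at all, using the fact that invariant subbundles are sums of generalised eigenspaces and hence correspond to monodromy-invariant subsets of the sheets of the spectral cover. The difference is in how transitivity of the monodromy is forced. You appeal to genericity: take $\sigma$ generic in a large linear system and hope the spectral curve comes out irreducible with simple branching. You yourself flag this as the point where the work lies, and you are right that it is not obvious --- one must check that the family $\{\sigma\circ\rho\}$ is rich enough that irreducibility is a non-empty open condition.

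The paper avoids this genericity step by a direct local construction. Rather than many simple (transposition) branch points whose composite is transitive, it manufactures a \emph{single} branch point at which all $n$ eigenspaces are permuted cyclically. The mechanism is: once enough poles are allowed elsewhere, one can prescribe $\sigma\circ\rho$ to any finite order on the formal neighbourhood of a chosen point $p$; one then sets $\sigma$ near $p$ to be $\rho^{-1}$ composed with a well-chosen Jordan form (a single nilpotent block plus scalar, say), which forces $p$ to be a branch point exchanging all the sheets. This is more constructive and sidesteps the irreducibility verification entirely --- transitivity is guaranteed by one $n$-cycle, not by assembling transpositions. Your route would also work once the genericity claim is established, but the paper's shortcut is worth knowing: prescribing local jets at one point is cheaper than controlling global geometry of the spectral curve.
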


The proof is fairly simple. If one allows sufficiently many poles, one can find
a large number of $\sigma$; the trick is to ensure that the result is
$\vec{t}$-stable.
The simplest way to ensure stability is to arrange for there to be no  
$\sigma\circ\rho$-invariant subbundles at all, a more restrictive condition. As
pointed out above, invariant
subbundles are sums of generalised eigenspaces; if the spectral curve has a
branch point that permutes all of these eigenspaces with no invariant subset,
then we are
done. Now, one can prescribe the behaviour of $\sigma\circ\rho$ on any formal
neighbourhood of a given point,  providing one allows sufficiently
many poles
elsewhere;  one then asks that this point be a branch point that permutes all
the eigenspaces, which is ensured by prescribing the behaviour on a formal
neighbourhood of sufficiently high order by letting $\sigma$ be the composition
of a well chosen Jordan
form by $\rho^{-1}$.

\begin{corollary}
There is a monopole with singularities at $p_i,i=1,\ldots,N$ corresponding to
the holomorphic data  $\EE= E(0), \EE'= E(c), \rho\colon \EE\rightarrow \EE'$.
\end{corollary}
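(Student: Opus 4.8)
The plan is to reduce the interval problem to the circle problem already solved by Theorem \ref{thm:main}. First I would invoke the preceding proposition to complete $\rho$ into a full circle monodromy: it produces a complementary meromorphic map $\sigma\colon\EE'\to\EE$ whose singularities $z_{N+1},\ldots,z_{N+M}$ are disjoint from $z_1,\ldots,z_N$, together with a circle length $T$ and times $t_1,\ldots,t_{N+M}$, arranged so that $(\EE,\sigma\circ\rho)$ is a $\vec{t}$-stable pair and the associated HE-constant vanishes, $C=0$. Using the freedom in the choice of the $t_i$ granted by that proposition, I would place the times $t_1,\ldots,t_N$ (the singularities of $\rho$) inside the subinterval $(0,c)$ and the times $t_{N+1},\ldots,t_{N+M}$ (the singularities of $\sigma$) inside $(c,T)$, with no singularity at $t=0$ or $t=c$.

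Next I would feed the $\vec{t}$-stable pair $(\EE,\sigma\circ\rho)$ into Theorem \ref{thm:main}, in the constructive form of Proposition \ref{prop:surjective}, to obtain an irreducible HE-monopole $(E,\nabla,\phi)$ on $S^1\times\Sigma$ with $\HH(E,\nabla,\phi)=(\EE,\sigma\circ\rho)$, Dirac-type singularities at the points $p_i=(t_i,z_i)$, and HE-constant $C=0$. Since $C=0$, this object solves the ordinary Bogomolny equation, so it is a genuine monopole. The final step is then to cut the circle open along the arc from $t=0$ to $t=c$ and restrict $(E,\nabla,\phi)$ to $[0,c]\times\Sigma$. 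Because the endpoints carry no singularity and the only singular points with time coordinate in $(0,c)$ are $p_1,\ldots,p_N$, the restriction is a smooth monopole on the interval with Dirac singularities precisely at $p_1,\ldots,p_N$, as required by the corollary.

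It remains to identify the interval's holomorphic data with the prescribed $\EE,\EE',\rho$. Here $\EE_0=\EE$ by construction, and the scattering map $R_{0,c}$ is the composite of the elementary scattering maps through the singularities at times $t_1,\ldots,t_N$. The clutching construction of $E$ in Proposition \ref{prop:surjective}, carried out through the explicit transition functions $f_{\alpha,\beta}$, was built precisely from the factorization of the monodromy $\sigma\circ\rho$ through its singularities; since the poles of $\rho$ and of $\sigma$ are disjoint and separated in time, the monodromy factors canonically as $R_{c,T}\circ R_{0,c}$ with the first factor governed entirely by the poles of $\rho$. Reading off the gluing data at time $t=c$, between the two clusters of singularities, gives the canonical isomorphism $\EE_c\cong\EE'$ together with $R_{0,c}=\rho$ (and symmetrically $R_{c,T}=\sigma$) in the relevant trivialisations.

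I expect the main obstacle to be exactly this last bookkeeping: confirming that the scattering along the first arc reproduces $\rho$ itself, rather than some other factor of the composite $\sigma\circ\rho$, and that the intermediate bundle $\EE_c$ is canonically the given $\EE'$. The crucial leverage is the temporal separation of the two singularity clusters arranged in the first step, which makes the factorization of the monodromy forced rather than merely possible; tracing through the construction of $E$ at $t=c$ then yields the desired identification.
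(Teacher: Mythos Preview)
Your approach is exactly the paper's: embed $I$ in a circle, use the preceding proposition to place the poles of $\sigma$ in the complementary arc, apply the main theorem, and restrict. The paper's proof is three sentences and does not spell out the bookkeeping you worry about in your last two paragraphs; your observation that the disjointness (in both $\Sigma$ and in time) of the two clusters of singularities forces the factorisation $R_{0,T}=R_{c,T}\circ R_{0,c}$ to coincide with $\sigma\circ\rho$ is the right way to close that gap, and one small correction: the times $t_1,\ldots,t_N$ are prescribed by the given $p_i$, not furnished by the proposition---only $T$ and $t_{N+1},\ldots,t_{N+M}$ are at your disposal.
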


One simply embeds the interval $I$ in a circle $C$, and places the extra poles
(those of $\sigma$) in the complement of $I$. One then applies the theorem for
the circle, obtains a monopole on $C\times \Sigma$, and restricts to $I\times
\Sigma$.

The monopoles associated to a given set of holomorphic data  are not unique.
Indeed, one can complete by different maps $\sigma, \sigma'$, and there is no
reason why the monopoles one obtains by restricting to $I\times\Sigma$
should be the same.

\section{HE-Monopoles on a flat circle bundle}
If one has a flat principal circle bundle $Y$ over a Riemann surface $\Sigma$,
then any metric on $\Sigma$ and any choice of length $T$ of the circle yield a
canonical metric on the circle bundle: at a point $p$ on the circle bundle, one
takes a local flat lift of $\Sigma$ given by the connection, and the tangent
space of the circle bundle decomposes into a sum (that we stipulate to be
orthogonal) of the orbit direction and the tangent to the section. On the first
one has a natural metric inherited from the invariant metric we have chosen on
the circle, and on the latter the lift of the metric on $\Sigma$; the two
combine to give a metric on $Y$.

Now pass to the universal covering $\widetilde \Sigma\rightarrow \Sigma$, and lift our circle bundle $Y$ with flat connection to  $\widetilde Y\rightarrow \widetilde \Sigma$. One can choose a global flat section $S$ on this universal cover, avoiding the lifts of the singularities. This section trivialises the circle bundle; let $S$ correspond to $t=0$. Let $\Deck$ be the group of deck transformation of this covering.  For each $D\in\Deck$ , one has an element $x(D)$ of the circle such that the original flat circle bundle over $\Sigma$ is obtained by identifying $(z, t)$ and $(D(z), x(D)\cdot t)$. 

The restriction of the lift of an HE-monopole to $S$ yields a holomorphic vector bundle $\widetilde\EE$ over $\widetilde \Sigma$. This bundle descends to $\Sigma$; one uses $\nabla_t-i\phi$ to define a parallel transport $T(x(D))$ from the fiber at $(D(z), x(D)\cdot t)$ to $(D(z), t)$. The holomorphic bundle $\EE$ on $\Sigma$ is obtained by composing the natural identification of $\widetilde\EE_{(z, t)}$ and $\widetilde\EE_{(D(z), x(D)\cdot t)}$ with the transport from $\widetilde\EE_{(D(z), x(D)\cdot t)}$ to $\widetilde\EE_{(D(z), t)}$. These identifications intertwine the parallel transport $\widetilde\rho$ mapping $\widetilde\EE$ to itself, and so $\widetilde\rho$ descends to $\rho\colon\EE\rightarrow\EE$.

Let $\tilde z_i$ be a lift of the point $z_i$ to $S$ and let $t(\tilde z_i)$ denote the time one must flow from
the section $S$ to the lift  of the point $p_i$. If $U$ is the union of $\ell$ disjoint fundamental domains in $S$ for the covering map to $\Sigma$, we define a  degree by:

\begin{definition} The $(\vec{t},U)$-degree $\delta_{\vec{t}, U}(\EE,\rho)$ of a 
bundle pair $(\EE,\rho)$ of singular type $\bbk$
is defined by 
\begin{equation}
 \delta_{\vec{t},U}(\EE,\rho)= c_1(\EE) -
\frac{\sum_{\tilde z_j\in U}
\trk{\vec{k}_j}t(\tilde z_j)}{T\ell}.
\end{equation} 
\end{definition}
Let $\widetilde M_t\colon\widetilde Y\rightarrow \widetilde Y$ denote the action of $\exp(2\pi i t/T)$ on $\widetilde  Y$.
Suppose now that our circle bundle is such that its lift to a finite cover $\Sigma'$ of $\Sigma$ is trivial. Choosing $U$ to be the union of fundamental domains for $\Sigma$ corresponding to one fundamental domain of $\Sigma'$, one has an equivalent to Remark \ref{independence}: 

\begin{lemma}
If one translates $S$ to $\widetilde M_t(S)$, the degree does not change. \end{lemma}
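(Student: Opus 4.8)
The plan is to reduce the statement to the product case already settled in Remark \ref{independence}, by pulling everything back to the finite cover $\Sigma'$ on which the flat circle bundle trivialises. By hypothesis the pullback of $Y$ to $\Sigma'$ is the trivial bundle $S^1\times\Sigma'$, and since the connection and the chosen section $S$ are flat, the canonical metric on $Y$ pulls back to the product metric $dt^2+g_{\Sigma'}$; hence the pullback of $(E,\nabla,\phi)$ is an HE-monopole on $S^1\times\Sigma'$ of precisely the type treated by Theorem \ref{thm:main}. Its singularities are the preimages of the $p_i$, which are exactly the lifts $\tilde z_j$ lying in the one $\Sigma'$-fundamental domain $U$, carrying the same weights $\vec{k}_j$ and the same flow-times $t(\tilde z_j)$.

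First I would check that the bundle pair $(\EE',\rho')$ obtained on $\Sigma'$ by restricting to the time-zero section is the pullback of $(\EE,\rho)$ under the covering $\pi\colon\Sigma'\to\Sigma$. On $\Sigma'$ the holonomies $x(D)$ vanish for $D\in\pi_1(\Sigma')$, so the twisted descent defining $\EE$ degenerates to the ordinary quotient and $\EE'=\pi^*\EE$; likewise $\rho'=\pi^*\rho$, since $\rho$ is the fibrewise monodromy, unaffected by the base change. Writing $\ell=\deg(\Sigma'/\Sigma)$ for the number of $\Sigma$-fundamental domains contained in $U$, this gives $c_1(\EE')=\ell\,c_1(\EE)$, and comparing Definition \ref{defdegree} for the $S^1\times\Sigma'$ monopole with the $(\vec{t},U)$-degree yields
\[
\delta_{\vec{t}}(\EE',\rho')=\ell\,c_1(\EE)-\frac{1}{T}\sum_{\tilde z_j\in U}\trk{\vec{k}_j}\,t(\tilde z_j)=\ell\,\delta_{\vec{t},U}(\EE,\rho).
\]
The translation $S\mapsto\widetilde M_t(S)$ descends on $\Sigma'$ to the circle action that shifts the time origin by $t$, so $\delta_{\vec{t}}(\EE',\rho')$ is invariant by Remark \ref{independence}, the hypothesis $\sum\trk{\vec{k}_j}=0$ on $S^1\times\Sigma'$ being Equation (\ref{eqn:condition0}). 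Dividing by $\ell$ gives the lemma.

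I expect the identification $c_1(\EE')=\ell\,c_1(\EE)$ to be the delicate point, that is, verifying that the parallel transports $T(x(D))$ entering the twisted descent contribute nothing to the degree once one is on $\Sigma'$. This is also why I would avoid working on $\Sigma$ directly: translating $S$ continuously and tracking the jumps of $c_1(\EE)$ as the section sweeps across the lifts $\tilde p_j$ is misleading, since each crossing changes both the fibrewise degree and the transport $T(x(D))$, and the net jump of $c_1(\EE)$ turns out to be only $\tfrac{1}{\ell}\trk{\vec{k}_j}$ rather than $\trk{\vec{k}_j}$. That fractional jump is exactly what cancels the $+T$ shift in the time-representative of $\tilde z_j$ (while the purely continuous drift cancels because $\sum_i\trk{\vec{k}_i}=0$), but it becomes transparent only after passing to $\Sigma'$, where the whole computation collapses to the product case.
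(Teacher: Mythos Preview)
The paper does not prove this lemma; it merely states it after remarking that, with $U$ chosen as a fundamental domain for $\Sigma'$, one has an equivalent to Remark~\ref{independence}. Your approach---pulling back to the product $S^1\times\Sigma'$, identifying $(\EE',\rho')=\pi^*(\EE,\rho)$ so that $\delta_{\vec{t}}(\EE',\rho')=\ell\,\delta_{\vec{t},U}(\EE,\rho)$, and then invoking Remark~\ref{independence}---is precisely what this setup invites and is correct; the speculative final paragraph about ``fractional jumps'' of $c_1(\EE)$ is loosely worded (Chern classes are integers), but it is commentary rather than argument and does not affect the proof.
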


\begin{theorem} Let $Y$ be a circle bundle that become trivial when lifted to a finite cover $\Sigma'$ of $\Sigma$.  
The moduli space ${\MM}^{ir}_{k_0}(Y,
p_1,\ldots,p_N,\vec{k}_1,\ldots,\vec{k}_N)$ of irreducible $\U(n)$ HE-monopoles on
$Y$ with $\EE$ of degree
$k_0$ and singularities at $p_j$ of type $\vec{k}_j$ maps bijectively to the
space ${\MM}(\Sigma,k_0, \bbk,\vec{t})$ of
$(\vec{t},U)$-stable holomorphic pairs $(\EE,\rho)$ with 
\begin{itemize}
\item $\EE$ a holomorphic rank $n$ bundle of degree $k_0$ on $\Sigma$,
\item $\rho$ a meromorphic section of $\Aut(\EE)$ of the form $F_j(z)
\diag_l(z^{k_{jl}})
G_j(z)$ near $z_j$, with $F_j, G_j$ holomorphic and invertible, and with
$\det(\rho)$ having divisor
$\sum_j\trk{  \vec{k}_j} z_j$.
\end{itemize}
\end{theorem}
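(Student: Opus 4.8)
The plan is to reduce the statement to the main result, Theorem~\ref{thm:main}, by passing to the finite cover $\Sigma'\to\Sigma$ on which the flat bundle $Y$ trivialises. Write $Y'=Y\times_\Sigma\Sigma'\cong S^1\times\Sigma'$ and let $G$ be the deck group of $\Sigma'\to\Sigma$, of order $\ell$; since the holonomy of a flat circle bundle factors through $H_1(\Sigma;\Z)$, this cover may be taken regular. The flat holonomy makes $G$ act on $S^1\times\Sigma'$ by combining the deck action on $\Sigma'$ with the circle rotation $t\mapsto x(g)\cdot t$. With this action, an irreducible $\U(n)$ HE-monopole on $Y$ is the same datum as a $G$-equivariant HE-monopole on $S^1\times\Sigma'$ admitting no $G$-invariant proper sub-HE-monopole: one pulls back in one direction, and a $G$-equivariant monopole descends to $Y$ in the other, a sub-HE-monopole over $Y$ pulling back precisely to a $G$-invariant one over $\Sigma'$.

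First I would install the dictionary on the holomorphic side. A pair $(\EE,\rho)$ on $\Sigma$ pulls back to a $G$-equivariant pair $(\EE',\rho')=\pi^*(\EE,\rho)$ on $\Sigma'$, and the $G$-invariant subpairs of the latter are exactly the $\rho$-invariant subbundles of the former. Choosing $U$ to be one fundamental domain of $\widetilde\Sigma\to\Sigma'$, so that $\ell=|G|$ and the $\tilde z_j\in U$ represent the singularities of $(\EE',\rho')$ on $\Sigma'$, a direct comparison using $c_1(\EE')=\ell\,c_1(\EE)$ shows that the $(\vec{t},U)$-degree on $\Sigma$ equals $1/\ell$ times the $\vec{t}'$-degree on $\Sigma'$; this is the averaged, descended form of the degree read off in Remark~\ref{independence}, and the normalising factor $T\ell$ is precisely this averaging. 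Consequently $(\vec{t},U)$-stability of $(\EE,\rho)$ is equivalent to $G$-equivariant stability of $(\EE',\rho')$, i.e.\ to $(\EE',\rho')$ being $\vec{t}'$-polystable with $G$ permuting its stable summands transitively---the standard finite-group descent of stability. Under Theorem~\ref{thm:main} the latter matches exactly the monopoles described in the previous paragraph.

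The key step is to check that the correspondence $\HH$ of Theorem~\ref{thm:main}, set up on $S^1\times\Sigma'$, is $G$-equivariant, so that it carries $G$-invariant objects to $G$-invariant objects. The subtlety is that $g\in G$ rotates the circle by $x(g)$, so on the holomorphic side it acts not by a bare pullback of $(\EE'_0,\rho'_0)$ but by the $\Sigma'$-pullback composed with the scattering map from $t=0$ to $t=x(g)$. Since both $\HH$ and $\vec{t}'$-stability are unchanged under shifting the origin of the circle---this is Remark~\ref{independence} on $S^1\times\Sigma'$, which here takes the form of the descent Lemma asserting invariance of the degree under $\widetilde M_t$---this composite is again a $\vec{t}'$-stable pair, isomorphic to the one $\HH$ assigns to the rotated monopole. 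Injectivity of $\HH$ (Proposition~\ref{prop:injective}) then forces the two $G$-actions to intertwine.

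Granting equivariance, surjectivity follows by running Proposition~\ref{prop:surjective} $G$-equivariantly: one starts from the initial metric constructed there, averaged over the finite group $G$ to make it $G$-invariant, whereupon the Simpson heat flow preserves $G$-invariance exactly as it preserved $S^1$-invariance, and the uniqueness of its limit renders the resulting HE-monopole $G$-invariant, hence descending to $Y$. Injectivity is inherited verbatim from Proposition~\ref{prop:injective}. Restricting the bijection on $S^1\times\Sigma'$ to $G$-invariant objects and translating through the dictionary of the first two paragraphs yields the asserted bijection on $Y$. I expect the main obstacle to be the bookkeeping in the equivariance step: one must track how the rotation $x(g)$ threads through the definition of $\rho$ as a monodromy and through the singular times $t(\tilde z_j)$, and confirm that the descent Lemma absorbs this rotation into an origin-shift to which stability is insensitive; the finite-group descent of stability must likewise be verified in the present weighted ($\vec{t}$-)setting rather than merely quoted.
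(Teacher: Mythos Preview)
Your proposal is correct and follows essentially the same approach as the paper: pass to the finite cover $\Sigma'$ where $Y$ trivialises, identify objects on $Y$ with $\Deck$-equivariant objects on $S^1\times\Sigma'$, and observe that the entire machinery of Section~4 (the initial metric of Lemma~\ref{lemma:constructed}, Simpson's heat flow in Theorem~\ref{thm:Simpson}, and Propositions~\ref{prop:surjective} and~\ref{prop:injective}) goes through with $S^1\times\Deck$-invariance in place of $S^1$-invariance. The paper's proof is three sentences to this effect; you have simply unpacked the bookkeeping it leaves implicit, in particular the matching of degrees, the circle-rotation in the $\Deck$-action, and the $\Deck$-invariance of the heat-flow limit (your averaging of the initial metric is a perfectly good alternative to redoing the construction equivariantly).
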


\begin{proof}
Let $\Deck$ be the group of deck transformations of the covering $\Sigma'\to\Sigma$. The proofs (and therefore statements) of Lemma \ref{lemma:constructed} and Theorem \ref{thm:Simpson} work even if we replace $S^1$ invariance by $S^1\times \Deck$ invariance.  In turn, Theorem \ref{thm:main2} and its constituents Propositions \ref{prop:injective} and \ref{prop:surjective} are also true even if we consider objects that are invariant under the action of $\Deck$.  The proof is thus complete.\qed
\end{proof}

When the circle bundle is not one whose lift to a finite cover is trivial, one expects that the appropriate definition of degree and stability is obtained by taking a limit of averages over larger and larger $U$.


\begin{thebibliography}{10}

\bibitem{Atiyah-monopoles-hyperbolic}
M.~F. Atiyah, \emph{Magnetic monopoles in hyperbolic spaces}, Vector bundles on
  algebraic varieties (Bombay, 1984), Tata Inst. Fund. Res. Stud. Math.,
  vol.~11, Tata Inst. Fund. Res., Bombay, 1987, pp.~1--33.

\bibitem{atiyah-hitchin}
Michael Atiyah and Nigel Hitchin, \emph{The geometry and dynamics of magnetic
  monopoles}, M. B. Porter Lectures, Princeton University Press, Princeton, NJ,
  1988.

\bibitem{Biswas-Ramanan}
I.~Biswas and S.~Ramanan, \emph{An infinitesimal study of the moduli of
  {H}itchin pairs}, J. London Math. Soc. (2) \textbf{49} (1994), no.~2,
  219--231.

\bibitem{Bottacin-symplectic-stable-pairs}
Francesco Bottacin, \emph{Symplectic geometry on moduli spaces of stable
  pairs}, Ann. Sci. \'Ecole Norm. Sup. (4) \textbf{28} (1995), no.~4, 391--433.

\bibitem{benoitpaper}
Benoit Charbonneau, \emph{{From spatially periodic instantons to singular
  monopoles}}, Comm. Anal. Geom. \textbf{14} (2006), no.~1, 183--214,
  \mbox{arXiv:math.DG/0410561}.

\bibitem{cherkis2001}
Sergey Cherkis and Anton Kapustin, \emph{Nahm transform for periodic monopoles
  and {$\mathcal{N}=2$} super {Y}ang--{M}ills theory}, Comm. Math. Phys.
  \textbf{218} (2001), no.~2, 333--371, \mbox{arXiv:hep-th/0006050v2}.

\bibitem{cherkis-hitchin-Dk}
Sergey~A. Cherkis and Nigel~J. Hitchin, \emph{Gravitational instantons of type
  {$D\sb k$}}, Comm. Math. Phys. \textbf{260} (2005), no.~2, 299--317,
  \mbox{arXiv:hep-th/0310084}.

\bibitem{cherkis1999}
Sergey~A. Cherkis and Anton Kapustin, \emph{Singular monopoles and
  gravitational instantons}, Comm. Math. Phys. \textbf{203} (1999), 713--728,
  \mbox{arXiv:hep-th/9803160}.

\bibitem{Cherkis-Kapustin-Super-QCD}
Sergey~A. Cherkis and Anton Kapustin, \emph{Periodic monopoles with
  singularities and {$\mathcal{N}=2$} super-{QCD}}, Comm. Math. Phys.
  \textbf{234} (2003), no.~1, 1--35, \mbox{arXiv:hep-th/0011081}.

\bibitem{DonaldsonSU2}
S.~K. Donaldson, \emph{Nahm's equations and the classification of monopoles},
  Comm. Math. Phys. \textbf{96} (1984), no.~3, 387--407.

\bibitem{Donaldson-boundary}
\bysame, \emph{Boundary value problems for {Y}ang--{M}ills fields}, J. Geom.
  Phys. \textbf{8} (1992), no.~1-4, 89--122.

\bibitem{gharris}
Phillip Griffiths and Joseph Harris, \emph{Principles of algebraic geometry},
  Wiley-Interscience [John Wiley \& Sons], New York, 1978.

\bibitem{hitchinGeodesics}
Nigel~J. Hitchin, \emph{Monopoles and geodesics}, Comm. Math. Phys. \textbf{83}
  (1982), no.~4, 579--602.

\bibitem{HitchinSDRiemann}
\bysame, \emph{The self-duality equations on a {R}iemann surface}, Proc. London
  Math. Soc. (3) \textbf{55} (1987), no.~1, 59--126.

\bibitem{HuMa2}
J.~C. Hurtubise and E.~Markman, \emph{Elliptic {S}klyanin integrable systems
  for arbitrary reductive groups}, Adv. Theor. Math. Phys. \textbf{6} (2002),
  no.~5, 873--978 (2003), \mbox{arXiv:math.AG/0203031}.

\bibitem{HuMa1}
\bysame, \emph{Surfaces and the {S}klyanin bracket}, Comm. Math. Phys.
  \textbf{230} (2002), no.~3, 485--502, \mbox{arXiv:math.AG/0107010}.

\bibitem{hurtubiseClassification}
Jacques Hurtubise, \emph{The classification of monopoles for the classical
  groups}, Comm. Math. Phys. \textbf{120} (1989), no.~4, 613--641.

\bibitem{Iwahori-Matsumoto}
N.~Iwahori and H.~Matsumoto, \emph{On some {B}ruhat decomposition and the
  structure of the {H}ecke rings of $p$-adic {C}hevalley groups}, Inst. Hautes
  \'Etudes Sci. Publ. Math. (1965), no.~25, 5--48.

\bibitem{JarvisEuclideanMonopoles}
Stuart Jarvis, \emph{Euclidean monopoles and rational maps}, Proc. London Math.
  Soc. (3) \textbf{77} (1998), no.~1, 170--192.

\bibitem{JarvisNorbury1997}
Stuart Jarvis and Paul Norbury, \emph{Compactification of hyperbolic
  monopoles}, Nonlinearity \textbf{10} (1997), no.~5, 1073--1092.

\bibitem{geometricLanglands}
Anton Kapustin and Edward Witten, \emph{Electric-magnetic duality and the
  geometric {L}anglands program}, Commun. Number Theory Phys. \textbf{1}
  (2007), no.~1, 1--236, \mbox{arXiv:hep-th/0604151}.

\bibitem{kronheimerMSc}
Peter~B. Kronheimer, Master's thesis, Oxford, 1986.

\bibitem{Lubke-Teleman}
Martin L{\"u}bke and Andrei Teleman, \emph{The {K}obayashi--{H}itchin
  correspondence}, World Scientific Publishing Co. Inc., River Edge, NJ, 1995.

\bibitem{Markman}
Eyal Markman, \emph{Spectral curves and integrable systems}, Compositio Math.
  \textbf{93} (1994), no.~3, 255--290.

\bibitem{Nash-monopolemoduli}
Oliver Nash, \emph{A new approach to monopole moduli spaces}, Nonlinearity
  \textbf{20} (2007), no.~7, 1645--1675.

\bibitem{Nash-singularhyperbolic}
\bysame, \emph{Singular hyperbolic monopoles}, Comm. Math. Phys. \textbf{277}
  (2008), no.~1, 161--187.

\bibitem{Nitsure}
Nitin Nitsure, \emph{Moduli space of semistable pairs on a curve}, Proc. London
  Math. Soc. (3) \textbf{62} (1991), no.~2, 275--300.

\bibitem{Norbury-monopoles-boundary}
Paul Norbury, \emph{Magnetic monopoles on manifolds with boundary}, Trans.
  Amer. Math. Soc. \textbf{(in press)} (2009), \mbox{arXiv:0804.3649}.

\bibitem{paulythesis}
Marc Pauly, \emph{Gauge theory in 3 and 4 dimensions}, Ph.D. thesis, Oxford
  University, 1996.

\bibitem{pauly}
\bysame, \emph{Monopole moduli spaces for compact {$3$}-manifolds}, Math. Ann.
  \textbf{311} (1998), no.~1, 125--146.

\bibitem{Pauly-spherical-monopoles}
\bysame, \emph{Spherical monopoles and holomorphic functions}, Bull. London
  Math. Soc. \textbf{33} (2001), no.~1, 83--88.

\bibitem{Pressley-Segal}
Andrew Pressley and Graeme Segal, \emph{Loop groups}, Oxford Mathematical
  Monographs, The Clarendon Press Oxford University Press, New York, 1986,
  Oxford Science Publications.

\bibitem{Simpson-1et2}
Carlos~T. Simpson, \emph{Moduli of representations of the fundamental group of
  a smooth projective variety. {I} and {II}}, Inst. Hautes \'Etudes Sci. Publ.
  Math., No.\ 79, 47--129 and No. 80, 5--79.

\bibitem{Simpson-Hodge-structures}
\bysame, \emph{Constructing variations of {H}odge structure using
  {Y}ang--{M}ills theory and applications to uniformization}, J. Amer. Math.
  Soc. \textbf{1} (1988), no.~4, 867--918.

\end{thebibliography}

\def\cprime{$'$}
\providecommand{\bysame}{\leavevmode\hbox to3em{\hrulefill}\thinspace}
\providecommand{\href}[2]{#2}

\end{document}